\documentclass[11pt, reqno]{amsart} 
\usepackage{amssymb,amscd,amsfonts,amsbsy}
\usepackage{latexsym}
\usepackage{exscale}
\usepackage{amsmath,amsthm,amsfonts}
\usepackage{mathrsfs}
\usepackage{xcolor} 
\usepackage[colorlinks=true,linkcolor=blue,citecolor=red,urlcolor=red, backref=page]{hyperref} 
\usepackage{esint} 

\usepackage[utf8]{inputenc}

\year=2022 \month=02 \day=13

\parskip=3pt

\setlength{\oddsidemargin}{0in}
\setlength{\evensidemargin}{0in}
\setlength{\evensidemargin}{0in}
\setlength{\textwidth}{6.2in}
\setlength{\textheight}{9in}
\setlength{\topmargin}{-0.50in}
\calclayout

\allowdisplaybreaks


\theoremstyle{plain}
\newtheorem{theorem}[equation]{Theorem}
\newtheorem{lemma}[equation]{Lemma}

\newtheorem{proposition}[equation]{Proposition}

\theoremstyle{definition}
\newtheorem{definition}[equation]{Definition}

\theoremstyle{remark}
\newtheorem{remark}[equation]{Remark}

\numberwithin{equation}{section} 

\def\diam{\operatorname{diam}}
\def\div{\operatorname{div}}
\def\dist{\operatorname{dist}}
\def\supp{\operatorname{supp}}
\def\BMO{\operatorname{BMO}}
\def\Lip{\operatorname{Lip}}
\def\loc{\operatorname{loc}}

\def\inter{\operatorname{int}}
\def\osc{\operatorname{osc}}

\def\Top{\operatorname{Top}}
\def\symmetric{\operatorname{sym}}
\def\osc{\operatorname{osc}}

\def\C{\mathcal{C}}
\def\N{\mathbb{N}}
\def\D{\mathbb{D}}

\def\F{\mathcal{F}}
\def\B{\mathbb{B}}
\def\G{\mathbb{G}}
\def\bbF{\mathbb{F}}

\def\Scal{\mathcal{S}}

\def\GG{\mathcal{G}}
\def\H{\mathcal{H}}
\def\P{\mathcal{P}}
\def\Z{\mathbb{Z}}
\def\S{\mathbf{S}}

\def\W{\mathcal{W}}
\def\R{\mathbb{R}}
\def\Rn{\mathbb{R}^n}
\def\re{\mathbb{R}}
\def\ree{\mathbb{R}^{n+1}}
\def\pom{\partial\Omega}

\def\w{\omega} 
\def\m{\mathfrak{m}}

\newcommand{\abs}[1]{\left\lvert #1 \right\rvert}

\renewcommand{\emptyset}{\text{\textup{\O}}}

\newcommand\restr[2]{\ensuremath{\left.#1\right|_{#2}}}

\def\Xint#1{\mathchoice
{\XXint\displaystyle\textstyle{#1}}%
{\XXint\textstyle\scriptstyle{#1}}%
{\XXint\scriptstyle\scriptscriptstyle{#1}}%
{\XXint\scriptscriptstyle\scriptscriptstyle{#1}}%
\!\int}
\def\XXint#1#2#3{{\setbox0=\hbox{$#1{#2#3}{\int}$ }
\vcenter{\hbox{$#2#3$ }}\kern-.585\wd0}}
\def\barint{\Xint-}

\newcommand{\bariint}{\barint\mkern-11.5mu\barint}

\begin{document}

\title[Carleson measure estimates, corona decompositions, and perturbation]
{Carleson measure estimates, corona decompositions, and perturbation of elliptic operators without connectivity}

\author{Mingming Cao}
\address{Mingming Cao\\
Instituto de Ciencias Matemáticas CSIC-UAM-UC3M-UCM\\
Con\-se\-jo Superior de Investigaciones Científicas\\
C/ Nicolás Cabrera, 13-15\\
E-28049 Ma\-drid, Spain} \email{mingming.cao@icmat.es}

\author{Pablo Hidalgo-Palencia}
\address{Pablo Hidalgo-Palencia\\
Instituto de Ciencias Matemáticas CSIC-UAM-UC3M-UCM\\
Con\-se\-jo Superior de Investigaciones Científicas\\
C/ Nicolás Cabrera, 13-15\\
E-28049 Ma\-drid, Spain
}
\address{and}
\address{Departamento de Análisis Matemático y Matemátia Aplicada
	\\
	Facultad de Matemáticas
	\\
	Universidad Complutense de Madrid
	\\
	Plaza de Ciencias, 3
	\\
	E-28040 Madrid, Spain
}
\email{pablo.hidalgo@icmat.es}

\author{José María Martell}
\address{José María Martell\\
Instituto de Ciencias Matemáticas CSIC-UAM-UC3M-UCM\\
Consejo Superior de Investigaciones Científicas\\
C/ Nicolás Cabrera, 13-15\\
E-28049 Ma\-drid, Spain} \email{chema.martell@icmat.es}

\thanks{The first and second authors are respectively supported by grants FJC2018-038526-I and CEX2019-000904-S-20-3, both funded by  MCIN/AEI/ 10.13039/501100011033. All the authors acknowledge financial support from MCIN/AEI/ 10.13039/501100011033 grants CEX2019-000904-S and PID2019-107914GB-I00. Part of this work was carried out while the three authors were visiting the Hausdorff Institute for Mathematics, Bonn (Germany). The authors express their gratitude to this institution.}

\date{\today}

\subjclass[2010]{42B37, 28A75, 28A78, 31A15, 31B05, 35J08, 35J25, 42B25, 42B35}


\keywords{Elliptic measure, surface measure, Carleson measure estimates, corona decomposition, absolute continuity, Green function, $A_{\infty}$ Muckenhoupt weights, uniform recifiability, perturbation}

\begin{abstract}
Let $\Omega \subset \mathbb{R}^{n+1}$, $n\ge 2$, be an open set with Ahlfors-David regular boundary satisfying the corkscrew condition. When $\Omega$ is connected in some quantitative form (more precisely, it satisfies the Harnack chain condition) one can establish that for any real elliptic operator with bounded coefficients, the quantitative absolute continuity of elliptic measures (i.e., its membership to the class $A_\infty$) is equivalent to the fact that all bounded null solutions satisfy Carleson measure estimates. In turn, in the same setting it is also known that these equivalent properties are stable under Fefferman-Kenig-Pipher perturbations. However, without connectivity, not much is known in general and, in particular, there is no Fefferman-Kenig-Pipher perturbation result available. 

In this paper, we work with a corona decomposition associated with the elliptic measure and show that it is equivalent to the fact that bounded null solutions satisfy partial/weak Carleson measure estimates, or to the fact that the Green function is comparable to the distance to the boundary in the corona sense. This characterization has profound consequences. First, we extend Fefferman-Kenig-Pipher's perturbation result to non-connected settings. Second, in the case of the Laplacian, these corona decompositions or, equivalently, the partial/weak Carleson measure estimates are meaningful enough to characterize the uniform rectifiability of the boundary of the open set. As a consequence, we obtain that the boundary of the set is uniformly rectifiable if bounded null solutions for any Fefferman-Kenig-Pipher perturbation of the Laplacian  satisfy (partial/weak) Carleson measure estimates. That is, there is a characterization of the uniform rectifiability of the boundary in terms of the properties of the bounded null solutions for operators whose coefficients may not possess any regularity. Third, for Kenig-Pipher operators any of the properties of the characterization is stable under transposition or under symmetrization of the matrices of coefficients. As a result, we obtain that Carleson measure estimates (or its partial/weak form) for bounded null-solutions of non-symmetric variable operators satisfying an $L^1$-Kenig-Pipher condition occur if and only if the boundary of the open set is uniformly rectifiable. Last, our results generalize previous work in settings where quantitative connectivity is assumed since, in that more topologically friendly settings, our conditions are equivalent to the fact that the elliptic measure is a Muckenhoupt weight.

\end{abstract}

\maketitle
\tableofcontents

\section{Introduction and Main results}\label{sec:intro} 
The study of the relationship between the geometry of an open set and the absolute continuity properties of its harmonic measure has attracted considerable interest over the last century, with remarkable progress over the last two decades. In summary, the emerging philosophy is that rectifiability along with suitable connectivity hypothesis is sufficient for absolute continuity of harmonic measure with respect to the surface measure, and that the rectifiability of the boundary is necessary. The first evidence comes from F. and M. Riesz \cite{RR}, who showed that for a simply connected domain in the complex plane, rectifiability of the boundary implies that harmonic measure is absolutely continuous with respect to arclength measure on the boundary. The converse was established  in \cite{AHMMMTV}, where it was proved that, in any dimension and in the absence of any connectivity condition, every piece of the boundary with finite surface measure is rectifiable, provided surface measure is absolutely continuous with respect to harmonic measure on that piece. 

On the other hand, some connectivity property seems to be necessary for absolute continuity to occur, since Bishop and Jones in \cite{BJ} constructed a uniformly rectifiable set $E$ on the plane and some subset of $F$ with zero arc-length which carries positive harmonic measure relative to the domain $\R^2 \setminus E$. Considering this point, a big effort has been recently made to understand in what open sets $\Omega \subset \ree$ and for what elliptic operators $L$ the elliptic measure $\omega_L$ is quantitatively absolutely continuous with respect to surface measure $\sigma:=\restr{\H^n}{\pom}$ on the boundary, which is formulated as $\omega_L \in A_{\infty}(\sigma)$  (cf.~Section~\ref {sec:1-sided-CAD}). One context where this theory has been satisfactorily developed is that of 1-sided chord-arc domains, that is, open sets which satisfy the interior corkscrew and Harnack chain conditions (quantitative openness and connectedness respectively) with an Ahlfors-David regular boundary. Under such background hypothesis, for the Laplacian, \cite{AHMNT, DJ, HM, HMU} give a characterization of $\w \in A_{\infty}(\sigma)$ in terms of uniform rectifiablility of the boundary (a quantitative version of rectifiability), which is also equivalent to the fact that $\Omega$ satisfies an exterior corkscrew condition and hence $\Omega$ is a chord-arc domain. Recently, in \cite{HMMTZ} (see also \cite{HMT1} and \cite{KP}), these characterizations have been extended to the so-called Kenig-Pipher operators, that is,  elliptic operators with variable coefficients whose gradient satisfies an $L^2$-Carleson condition. On the other hand, in the setting of 1-sided chord-arc domains it has been established that for general elliptic operators one can characterize $\omega_L \in A_{\infty}(\sigma)$ in terms of the fact that all bounded null solutions satisfy Carleson estimates (cf.~\cite{CHMT}). Additionally, the papers \cite{CHM, CHMT} extend the fundamental work of \cite{FKP} to the setting of 1-sided chord-arc domains and establish that the $A_\infty$ property is stable under Carleson perturbations.

However, in the absence of the Harnack chain condition, the quantitative absolute continuity of the harmonic or elliptic measure becomes delicate as can be seen from the Bishop-Jones counterexample. 
In \cite{HLMN} (see also \cite{HM4, MT}) it was shown that in an open set with Ahlfors-David regular boundary, if the associated harmonic measure satisfies the weak-$A_{\infty}$ property (a quantitative version of the absolute continuity) then the boundary is uniformly rectifiable. This latter property is in turn equivalent to the fact that all bounded harmonic functions satisfy Carleson measure estimates or are $\varepsilon$-approximable (see \cite{HMM, GMT} and also \cite{AGMT} for a class of elliptic operators with variable coefficients). Thus, even without strong connectivity assumptions (like in the Bishop-Jones counterexample), some estimates on harmonic functions ---which in
more topologically benign environments are equivalent to the $A_{\infty}$ property of harmonic measure--- remain valid.

The geometrical characterization of the weak-$A_{\infty}$ property of harmonic measure ---and hence the solvability of the $L^p$-Dirichlet problem for finite $p$, see for instance \cite{HLe}--- was obtained in \cite{AHMMT}. Assuming that an open set $\Omega$ satisfies natural (and optimal in a certain sense) background conditions (more precisely, interior corkscrew condition and Ahlfors-David regularity of the boundary), it is shown that harmonic measure belongs to weak-$A_{\infty}(\sigma)$ if and only if $\pom$ is uniformly rectifiable and $\Omega$ satisfies the weak local John condition, a condition that guarantees local non-tangential access to an ample portion of the boundary. Additionally, any of these properties is equivalent to the fact that $\Omega$ has ``interior big pieces of chord-arc domains''. In this regard we also mention Azzam \cite{A}, who obtained a similar characterization for the stronger $A_\infty$ condition.

The main goal of our present work is to consider open sets with no connectivity assumption and general elliptic operators. As explained above, the weak-$A_\infty$ property is not expected to hold, and we look to relax it so that the new condition has the following features. First, it is equivalent to the $A_\infty$ property in better settings. Second, it gives Carleson type estimates for bounded solutions. Third, in the case of the Laplacian or Kenig-Pipher operators allows one to characterize the uniform rectifiability of the boundary. Last but not least, it is stable under Fefferman-Kenig-Pipher Carleson perturbations, that is, if the disagreement of two matrices satisfy a Fefferman-Kenig-Pipher Carleson measure condition, then the desired condition should be transferable from one operator to the other. 

The condition that we consider here is a corona decomposition adapted to the elliptic measure that appeared implicit in \cite{HLMN}, playing a fundamental role in the proof of its main result, and was formalized in \cite{GMT} (see also  \cite{AGMT}) in a somehow weaker form. This corona decomposition is a partition of the collection of the dyadic cubes associated with the Ahlfors-David regular boundary in good cubes and bad cubes where the good cubes are organized in trees. The collection of bad cubes and the tops of the trees satisfy a packing condition. And, in each fixed tree, the averages of the elliptic measure, with some fixed pole at a scale above the top and normalized appropriately, is comparable to a constant (see Definition~\ref{def:corona} below). We show that any of the two versions of this corona decomposition is equivalent to the fact that the Green function is comparable to the distance to the boundary (with an appropriate normalization) in the corona sense (see Definition~\ref{def:corona}) or equivalent to  partial/weak Carleson measure estimates for bounded solutions, that is, Carleson measure estimates that only take into account one of the components of the associated Whitney region (see Definition~\ref{def:Carleson}). The precise result is as follows:

\begin{theorem}\label{thm:corona} 
Let $\Omega \subset \ree$, $n\ge 2$, be an open set with Ahlfors-David regular boundary satisfying the corkscrew condition, and let $Lu=-\div(A \nabla u)$ be a real (not necessarily symmetric) uniformly elliptic operator. Write $\omega_L$ and $G_L$ for the associated elliptic measure and Green function, respectively. Then the following statements are equivalent: 
\begin{list}{\rm (\theenumi)}{\usecounter{enumi}\leftmargin=1cm \labelwidth=1cm \itemsep=0.1cm \topsep=.2cm \renewcommand{\theenumi}{\alph{enumi}}}

\item\label{list:wL-strong} $\omega_L$ admits a strong corona decomposition (cf.~Definition~\ref{def:corona}).  

\item\label{list:wL} $\omega_L$ admits a corona decomposition (cf.~Definition~\ref{def:corona}).    

\item\label{list:GL} $G_L$ is comparable to the distance to the boundary in the corona sense (cf.~Definition~\ref{def:corona}).   

\item\label{list:CME} $L$ satisfies partial/weak Carleson measure estimates (cf.~Definition~\ref{def:Carleson}). 

\end{list}
\end{theorem}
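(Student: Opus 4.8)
The plan is to prove the cycle of implications $(\ref{list:wL-strong}) \Rightarrow (\ref{list:wL}) \Rightarrow (\ref{list:GL}) \Rightarrow (\ref{list:CME}) \Rightarrow (\ref{list:wL-strong})$, which is the most economical route since $(\ref{list:wL-strong}) \Rightarrow (\ref{list:wL})$ is immediate from the definitions (a strong corona decomposition is, a fortiori, a corona decomposition). The substantive content lives in the remaining three arrows, and the organizing principle throughout will be the interplay between the elliptic measure $\omega_L$ of a surface ball, the Green function $G_L$ with a pole at a corkscrew point above, and the CFMS-type estimates that relate the two in the corkscrew/ADR setting (which hold here without any Harnack chain hypothesis, working cube by cube inside the Whitney regions).

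\medskip

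For $(\ref{list:wL}) \Rightarrow (\ref{list:GL})$, I would fix a tree (stopping-time region) $\mathbf{S}$ from the corona decomposition with top cube $Q_{\mathbf{S}}$ and the associated pole $X_{\mathbf{S}}$, and show that on the Whitney regions attached to cubes $Q \in \mathbf{S}$ one has $G_L(X_{\mathbf{S}}, Y) \approx \ell(Q)/\sigma(Q) \cdot \omega_L^{X_{\mathbf{S}}}(Q) \approx \dist(Y,\pom)/\ell(Q_{\mathbf{S}})^{?}$ after the correct normalization dictated by Definition~\ref{def:corona}. The key inputs are: the comparison $\omega_L^{X_{\mathbf{S}}}(Q) \approx \ell(Q)^{n-1} G_L(X_{\mathbf{S}}, X_Q)$ for corkscrew points $X_Q$ of $Q$, which is the standard CFMS/Bourgain-type estimate valid in corkscrew domains with ADR boundary; the defining property of the tree that $\sigma(Q)^{-1}\omega_L^{X_{\mathbf{S}}}(Q)$ is essentially constant ($\approx \sigma(Q_{\mathbf{S}})^{-1}\omega_L^{X_{\mathbf{S}}}(Q_{\mathbf{S}})$) for all $Q \in \mathbf{S}$; and Harnack's inequality \emph{within} a single Whitney region (which is connected and of bounded geometry, so no global connectivity is needed) to pass from the corkscrew point $X_Q$ to an arbitrary $Y$ in the Whitney region. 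Combining these gives $G_L(X_{\mathbf{S}}, Y) \approx \dist(Y,\pom)$ up to the normalizing constant, which is exactly the corona-sense comparability.

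\medskip

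For $(\ref{list:GL}) \Rightarrow (\ref{list:CME})$, the strategy is the now-classical one, adapted to the corona setting: to verify the partial/weak Carleson measure estimate for a bounded null solution $u$, it suffices by a standard reduction to bound $\int_{T_{\mathbf{S}}} |\nabla u(Y)|^2 \dist(Y,\pom)\, dY$ over the Whitney region $T_{\mathbf{S}}$ of each tree by $C \sigma(Q_{\mathbf{S}})\|u\|_\infty^2$, and then sum using the packing condition on the tops. On a single tree, one integrates the Caccioppoli-type identity against the Green function $G_L(X_{\mathbf{S}}, \cdot)$, which by hypothesis is $\approx \dist(\cdot,\pom)$ there; the boundary terms are controlled using $\|u\|_\infty$ and the fact that $\int G_L(X_{\mathbf{S}},\cdot)$-type quantities are controlled by $\sigma(Q_{\mathbf{S}})$ (again via CFMS and ADR). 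One must be slightly careful because the comparability $G_L \approx \dist$ only holds inside the tree, not globally, so the integration-by-parts/Caccioppoli argument has to be localized to $T_{\mathbf{S}}$ with the appropriate cutoffs, but this is manageable. This is the step where the restriction to ``partial/weak'' (only one component of the Whitney region) rather than full Carleson estimates is essential and is used honestly.

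\medskip

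The main obstacle is $(\ref{list:CME}) \Rightarrow (\ref{list:wL-strong})$: producing a \emph{strong} corona decomposition out of the analytic hypothesis. Here I would run a stopping-time construction on the dyadic cubes of $\pom$: starting from a top cube $Q_0$ with an associated pole $X_0$, declare a child $Q$ to be a stopping cube when the normalized average $\sigma(Q)^{-1}\omega_L^{X_0}(Q)$ first deviates from $\sigma(Q_0)^{-1}\omega_L^{X_0}(Q_0)$ by more than a fixed (small or large) factor, thereby forcing the defining comparability on each tree by construction; one then restarts at each maximal stopping cube. The crux is the packing estimate $\sum \sigma(Q_{\mathbf{S}}) \lesssim \sigma(Q_0)$ for the tops, and this is exactly where the CME hypothesis enters: one must show that the ``oscillation'' of the normalized elliptic measure, summed over stopping cubes, is controlled, and the bridge is to test the partial/weak Carleson measure estimate against a cleverly chosen bounded solution — the natural candidate being (a truncation/normalization of) $u = \omega_L^{X_0}(\cdot \cap E)$ for suitable boundary sets $E$, or equivalently to exploit that $\log(d\omega_L/d\sigma)$-type quantities, or the square function of $G_L/\dist$, are Carleson-controlled. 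Relating the discrete oscillation sum to such a square function and then to CME — essentially a quantitative, localized version of the argument that CME plus a stopping time yields a corona decomposition, in the spirit of \cite{HLMN, GMT} but now for a general (possibly non-symmetric) operator and only with corkscrew access — is the technical heart of the paper, and getting the \emph{strong} (two-sided, with the sharp normalization) version rather than the weaker one of \cite{GMT} is presumably the main new difficulty.
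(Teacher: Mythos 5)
The overall cycle $(a)\Rightarrow(b)\Rightarrow(c)\Rightarrow(d)\Rightarrow(a)$ is the same one the paper proves, and your sketches for $(c)\Rightarrow(d)$ and the vague outline for $(d)\Rightarrow(a)$ are broadly in the right direction, but there is a genuine gap in your argument for $(b)\Rightarrow(c)$.

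You invoke ``the standard CFMS/Bourgain-type estimate $\omega_L^{X_{\S}}(Q)\approx \ell(Q)^{n-1}G_L(X_\S,X_Q)$, valid in corkscrew domains with ADR boundary'', and then propagate it by Harnack within a single Whitney region. Only the direction $G_L(X_\S,X_Q)\lesssim \omega_L^{X_\S}(\Delta)/\sigma(\Delta)$ holds in this generality (that is the paper's Lemma~\ref{lem:Bourgain-CFMS}(b)). The reverse pointwise bound is exactly what fails without the Harnack chain condition: the Whitney region $U_Q$ may be disconnected, and it is entirely possible that $G_L(X_\S,\cdot)$ vanishes identically on the particular component containing the chosen corkscrew point $X_Q$, so Harnack within that component gets you nothing. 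Indeed, Definition~\ref{def:corona}(iii) is purposely stated only as a \emph{supremum} over $X\in 2\widetilde B_Q\cap\Omega$ with $\delta(X)\gtrsim\ell(Q)$, not as a pointwise comparison at a corkscrew point, precisely because the latter cannot hold. The paper's proof of this implication (Section~\ref{sec:wL-GL}, Lemma~\ref{lem:gradient}) replaces your CFMS step by a Riesz-formula/integration-by-parts argument: it pairs $G_\S$ against a cutoff $\Phi_Q$, hides the near-boundary contribution to $\iint|\nabla G_\S|$ using the $2^N$-refinement of the corona and the corona averages, and thereby locates a set $E_Q$ of positive measure in $2\widetilde B_Q$ (not a prescribed corkscrew point) where both $|\nabla\GG|$ and $\GG/\delta$ are bounded below by the normalized density. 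Your argument, as written, would only give the upper half of \eqref{corona-G:CFMS}.

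Two smaller remarks. For $(c)\Rightarrow(d)$, summing over trees and using the packing of the tops is the right idea, but one still needs to localize the Carleson sum to a single sub-tree $\S\cap\D_{Q_0}$ and to observe that the cube-by-cube contributions from $\B$ and from the various tops over $\D_{Q_0}$ are handled by the packing condition — the paper carries this out explicitly (the computation culminating in \eqref{eq:bad-good} plus the sawtooth integration-by-parts with $G_\S$). For $(d)\Rightarrow(a)$, you correctly identify the stopping-time strategy and the use of null solutions of the form $u=\int f\,d\omega_L^\cdot$, but the paper's stopping time additionally involves the maximal-function threshold $\big(\fint_{Q'}(\mathcal M\omega)^{1/2}d\sigma\big)^2$ (needed for the \emph{strong} version) and, crucially, the packing of the tops is obtained by forming random Rademacher combinations $u_t=\sum_k r_k(t)u_{Q_k}$ and testing CME against them — a device (borrowed from \cite{GMT,AGMT} and built on Lemma~\ref{lemma:AGMT}) that your sketch does not mention and that is what makes the packing estimate close.
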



Theorem~\ref{thm:corona} can be applied to consider Fefferman-Kenig-Pipher perturbations of elliptic operators extending \cite{FKP, MPT1, CHMT} to sets without any connectivity assumption. The result that we state next is, as far as we know, the first Fefferman-Kenig-Pipher perturbation in open sets which do not assume the Harnack chain condition: 

\begin{theorem}\label{thm:LL}
Let $\Omega \subset \ree$, $n \ge 2$, be an open set with Ahlfors-David regular boundary satisfying the corkscrew condition. Let $L_0u=-\div(A_0 \nabla u)$ and $L_1u=-\div(A_1 \nabla u)$ be real (not necessarily symmetric) uniformly elliptic operators so that $L_1$ is a Fefferman-Kenig-Pipher perturbation of $L_0$, that is, the following Carleson measure estimate holds:
\begin{equation}\label{carleson-perturb}
\sup _{\substack{x \in \partial \Omega \\ 0<r<\diam(\partial \Omega)} }
\frac{1}{\sigma(B(x, r) \cap \partial \Omega)} \iint_{B(x, r) \cap \Omega} 
\frac{\varrho(A_0, A_1)(X)^2}{\delta(X)} dX<\infty, 
\end{equation} 
where the disagreement between $A_0$ and $A_1$ in $\Omega$ is given by 
\begin{equation*}
\varrho(A_0, A_1)(X) :=\sup_{Y \in B(X, \delta(X)/2)} |A_0(Y) - A_1(Y)|, \quad X \in \Omega. 
\end{equation*} 
Then the following statements are equivalent: 
\begin{list}{\rm (\theenumi)}{\usecounter{enumi}\leftmargin=1cm \labelwidth=1cm \itemsep=0.1cm \topsep=.2cm \renewcommand{\theenumi}{\alph{enumi}}}
\item $\omega_{L_0}$ admits a (strong) corona decomposition (equivalently, $G_{L_0}$ is comparable to the distance to the boundary in the corona sense or $L_0$ satisfies partial/weak Carleson measure estimates).  
 
\item $\omega_{L_1}$ admits a (strong) corona decomposition (equivalently, $G_{L_1}$ is comparable to the distance to the boundary in the corona sense or $L_1$ satisfies partial/weak Carleson measure estimates). 
\end{list}
\end{theorem}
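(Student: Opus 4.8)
The plan is to deduce Theorem~\ref{thm:LL} from Theorem~\ref{thm:corona} by transferring one of the four equivalent properties from $L_0$ to $L_1$ (and, by symmetry of the hypothesis \eqref{carleson-perturb} in $A_0$ and $A_1$, back again). The most convenient property to carry across is the comparability of the Green function to the distance in the corona sense, statement (c), since the Fefferman--Kenig--Pipher mechanism is best suited to comparing solutions of $L_0$ and $L_1$ on Whitney regions. So the first step is to fix a dyadic cube $Q_0 \subset \partial\Omega$ and, using that $\omega_{L_0}$ admits a corona decomposition (hence by Theorem~\ref{thm:corona} so does (c) hold for $G_{L_0}$), obtain the coronization $\partial\Omega = \mathcal{G}_0 \cup \mathcal{B}_0$ into good and bad cubes with the stopping-time tops packing and, within each tree $\mathcal{T} \in \mathcal{G}_0$, the comparability $G_{L_0}(X_{\mathcal{T}}, \cdot) \approx \delta(\cdot)/r_{\mathcal{T}}^{?}$ (with the normalization used in Definition~\ref{def:corona}) on the associated sawtooth-truncated Whitney region.

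Second, I would run a second stopping-time argument \emph{inside each tree} $\mathcal{T}$ of the $L_0$-coronization, subdividing it further according to whether the Carleson excess of $\varrho(A_0,A_1)^2/\delta$ over a subcube stays below a small threshold $\eta$ to be chosen. Because of \eqref{carleson-perturb}, the tops of these new stopping cubes again satisfy a packing condition (with constant depending on $\eta$ and the ambient Carleson norm), so the union of the refined bad cubes together with $\mathcal{B}_0$ still packs; one then shows that on each refined tree the coefficients $A_0$ and $A_1$ are uniformly $\eta$-close in an averaged Whitney sense. The heart of the matter is the local comparison: on a refined tree I would compare the Green functions $G_{L_0}$ and $G_{L_1}$ (with a common, appropriately chosen pole) and show that the comparability $G \approx \delta$ survives perturbation. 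Here one invokes the standard Fefferman--Kenig--Pipher / Caffarelli--Fabes--Mortola--Salsa machinery adapted to the corkscrew-plus-ADR (no Harnack chain) setting: interior Moser/De Giorgi--Nash estimates, the comparison principle for corkscrew points, and an integration-by-parts identity expressing the difference of elliptic measures (or Green functions) in terms of $\iint (A_0 - A_1)\nabla G_{L_0} \cdot \nabla G_{L_1}$, which is controlled by the small Carleson norm $\eta$. This is the step I expect to be the main obstacle, precisely because connectivity is absent: the usual FKP argument leans on Harnack chains to propagate estimates, so one must instead work component-by-component within Whitney regions, use the partial/weak nature of the Carleson estimate (only one Whitney component at a time, per Definition~\ref{def:Carleson}), and rely on the sawtooth domains of \cite{HM} (which \emph{are} 1-sided chord-arc) to borrow the connectivity locally where it is genuinely needed.

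Third, having produced on each refined tree the comparability $G_{L_1} \approx \delta$ up to uniform constants, I would reassemble: the refined coronization is a valid corona decomposition for the purposes of statement (c) applied to $G_{L_1}$, because (i) the bad-cube family packs, (ii) the tree tops pack, and (iii) on each tree the Green function comparability holds with constants independent of the tree. Thus $G_{L_1}$ is comparable to the distance to the boundary in the corona sense, and Theorem~\ref{thm:corona} (applied now to $L_1$) upgrades this to: $\omega_{L_1}$ admits a strong corona decomposition, equivalently all the listed properties hold for $L_1$. Finally, since \eqref{carleson-perturb} is manifestly symmetric in $A_0$ and $A_1$ — the disagreement $\varrho(A_0,A_1) = \varrho(A_1,A_0)$ — the reverse implication (b)$\Rightarrow$(a) follows by the identical argument with the roles of $L_0$ and $L_1$ interchanged, which completes the proof. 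A secondary technical point worth isolating as a lemma is the choice of poles: one must verify that the pole $X_{\mathcal{T}}$ used for $L_0$ in Definition~\ref{def:corona} can also serve as a pole for $G_{L_1}$ on the refined tree, or else relate the two via the change-of-pole estimates for elliptic measure, which hold in the ADR-plus-corkscrew setting.
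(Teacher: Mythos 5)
Your overall strategy --- fix the corona decomposition for $G_{L_0}$, refine it, transfer something to $L_1$, invoke Theorem~\ref{thm:corona} again, and close the loop using the symmetry $\varrho(A_0,A_1)=\varrho(A_1,A_0)$ --- is the right shape. The problem is the transfer target you pick. You aim for statement (c) for $L_1$: you want to show directly that $G_{L_1}$ is itself comparable to $\delta$ in the corona sense, which forces you to compare $G_{L_0}$ and $G_{L_1}$ (or the associated elliptic measures) pointwise on Whitney regions. That comparison is exactly what is unavailable without Harnack chains. The two tools you lean on for it --- the CFMS boundary comparison principle and change-of-pole for elliptic measure --- both genuinely require the Harnack chain condition; they fail for open sets that are only ADR plus corkscrew (that failure is the reason the paper replaces $A_\infty$/weak-$A_\infty$ by corona conditions in the first place). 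In particular your assertion that change-of-pole ``holds in the ADR-plus-corkscrew setting'' is false, and the ``main obstacle'' you flag is not merely technical: the component-by-component route you sketch gives you no handle on $G_{L_1}(X_\S, Y)$ at points $Y$ in a Whitney component that $X_\S$ cannot reach by a chain in $\Omega$, and the extra stopping time that makes the Carleson excess of $\varrho^2/\delta$ small does not repair this, because smallness of the disagreement does not yield a pointwise Green-function comparison without connectivity.

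The paper avoids the issue by choosing a different transfer target: it proves that (c) for $L_0$ implies (d) for $L_1$, i.e., that bounded $L_1$-solutions satisfy partial/weak Carleson measure estimates, rather than (c) for $L_1$. (This is Theorem~\ref{thm:AAAD}, with Theorem~\ref{thm:LL} the special case $A=A_0-A_1$, $D=0$.) Concretely, for a bounded $L_1$-solution $u$ one bounds $\iint_{\Omega^\vartheta_{\F_M,Q_0}}|\nabla u|^2 G_{L_0}(X_\S,\cdot)\,dX$ by a Leibniz-rule identity and integration by parts, testing $L_1 u=0$ against $u\,G_\S\Psi_M^2$ and $L_0^\top G_\S=0$ against $u^2\Psi_M^2$; the only error term is $\iint \mathcal{E}\,\nabla(u^2)\cdot\nabla(G_\S\Psi_M^2)\,dX$ with $\mathcal{E}=A_0-A_1$, which the FKP Carleson hypothesis controls via Cauchy--Schwarz, Caccioppoli, Harnack, and the Green-function corona bound \eqref{prwgbsbs}, and which is absorbed at the end by Young's inequality since the energy $\widetilde{\mathcal J}_M$ is finite. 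In this argument $G_{L_1}$ never appears, so no comparison of the two Green functions, no change-of-pole, and no boundary Harnack is needed, and the second stopping time you propose is unnecessary. I would recommend re-aiming your second and third steps at proving (d) for $L_1$ along these lines.
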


Much as in \cite{CHMT} we can see that any of the equivalent conditions in Theorem~\ref{thm:corona} can be transferred from $L$ to its transpose $L^{\top}$ or to its symmetric part $L^{\mathrm{sym}}$ when the matrix of coefficients are regular and some Carleson measure condition is assumed on its antisymmetric part: 

\begin{theorem}\label{thm:LLT}
Let $\Omega \subset \ree$, $n \ge 2$, be an open set with Ahlfors-David regular boundary satisfying the corkscrew condition. Let $Lu=-\div(A \nabla u)$ be a real (not necessarily symmetric) uniformly elliptic operator, let $L^{\top}$ denote the transpose of $L$, and let $L^{\symmetric}=\frac{L+L^{\top}}{2}$ be the symmetric part of $L$. Assume that $(A-A^{\top}) \in \Lip_{\loc}(\Omega)$ and that the following Carleson measure estimate holds 
\begin{equation}\label{carleson-div}
\sup_{\substack{x \in \partial \Omega \\ 0<r<\diam(\partial \Omega)} }
\frac{1}{\sigma(B(x, r) \cap \partial \Omega)} \iint_{B(x, r) \cap \Omega} 
|\div_C(A-A^{\top})(X)|^2 \delta(X) dX<\infty, 
\end{equation} 
where 
\begin{align*}
\div_C(A-A^{\top})(X)=\bigg(\sum_{i=1}^{n+1} \partial_i(a_{i,j}-a_{j,i})(X) \bigg)_{1 \leq j \leq n+1}, 
\quad X \in \Omega.  
\end{align*} 
Then the following statements are equivalent: 
\begin{list}{\rm (\theenumi)}{\usecounter{enumi}\leftmargin=1cm \labelwidth=1cm \itemsep=0.1cm \topsep=.2cm \renewcommand{\theenumi}{\alph{enumi}}}
\item $\omega_{L}$ admits a (strong) corona decomposition (equivalently, $G_{L}$ is comparable to the distance to the boundary in the corona sense or $L$ satisfies partial/weak Carleson measure estimates).  

\item $\omega_{L^{\top}}$ admits a (strong) corona decomposition (equivalently, $G_{L^{\top}}$  is comparable to the distance to the boundary in the corona sense or $L^{\top}$ satisfies partial/weak Carleson measure estimates). 
 
\item $\omega_{L^{\symmetric}}$ admits a (strong) corona decomposition (equivalently, $G_{L^{\symmetric}}$  is comparable to the distance to the boundary in the corona sense or $L^{\symmetric}$ satisfies partial/weak Carleson measure estimates).  
\end{list}
\end{theorem}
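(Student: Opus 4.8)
The plan is to run a Carleson (Fefferman--Kenig--Pipher type) perturbation argument, but now for a \emph{drift} perturbation rather than a perturbation of the matrix itself, taking advantage of the special algebraic structure of the antisymmetric part. By Theorem~\ref{thm:corona} we are free to work with whichever of the four equivalent formulations is most convenient, and we shall phrase things through the comparison ``$G$ is comparable to $\delta$ in the corona sense'' (item \ref{list:GL}). Moreover it suffices to prove the equivalence between items (a) and (c): the transpose $L^{\top}$ has coefficient matrix $A^{\top}$, whose antisymmetric part is $-\tfrac12(A-A^{\top})$, still locally Lipschitz and with $\div_C$ satisfying \eqref{carleson-div} (the hypothesis is invariant under this sign change), while $(L^{\top})^{\symmetric}=L^{\symmetric}$; hence applying the equivalence (a)$\iff$(c) to $L^{\top}$ in place of $L$ yields (b)$\iff$(c). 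Everything thus reduces to the statement: if $L$ and $L^{\symmetric}$ share the symmetric part of coefficients, $D:=\tfrac12(A-A^{\top})\in\Lip_{\loc}(\Omega)$, and \eqref{carleson-div} holds, then $\omega_L$ admits a (strong) corona decomposition if and only if $\omega_{L^{\symmetric}}$ does.

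The algebraic core is the identity $\div(D\,\nabla u)=(\div_C D)\cdot\nabla u$, valid for any antisymmetric matrix $D$, since the pure second-order contribution $\sum_{i,j}D_{i,j}\,\partial_i\partial_j u$ cancels by antisymmetry. Consequently $L^{\symmetric}u=Lu+(\div_C D)\cdot\nabla u$, so every $L$-solution $u$ solves the drift equation $L^{\symmetric}u=(\div_C D)\cdot\nabla u$, with right-hand side governed by the Carleson measure in \eqref{carleson-div}. Taking $u=G_L(\cdot,Y)$ and using the representation formula for the self-adjoint operator $L^{\symmetric}$, one obtains, after the usual truncations near $\pom$ and near the pole to accommodate the merely local Lipschitz regularity of $D$,
\begin{equation*}
G_L(X,Y)-G_{L^{\symmetric}}(X,Y)=\iint_{\Omega}G_{L^{\symmetric}}(X,Z)\,(\div_C D)(Z)\cdot\nabla_Z G_L(Z,Y)\,dZ,\qquad X\neq Y.
\end{equation*}

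From here the goal is to upgrade this identity to the comparability of $G_L$ and $G_{L^{\symmetric}}$ in the corona sense, tree by tree. Fix a tree in the corona decomposition, with pole $X$ at a controlled scale above its top, and observe that ``$G_L(\cdot,X)\approx\delta$, suitably normalized, on the Whitney regions of the tree'' should be shown equivalent to the analogous statement for $G_{L^{\symmetric}}$. To control the error integral we decompose $\Omega$ into Whitney cubes: on each such cube, Caccioppoli's inequality bounds the $L^2$-average of $|\nabla_Z G_L(Z,Y)|$ by $\delta(Z)^{-1}$ times the $L^2$-average of $G_L(\cdot,Y)$ on a dilate, while Moser's estimate and Harnack's inequality let one treat $G_{L^{\symmetric}}(X,\cdot)$ and $G_L(\cdot,Y)$ as essentially constant there; pairing $|\div_C D|\,\delta^{1/2}$ against $\delta^{-1/2}$ by Cauchy--Schwarz and invoking \eqref{carleson-div} together with a Carleson-embedding estimate (summed over the Whitney cubes meeting the tree) shows the error is at most $\eta\,G_{L^{\symmetric}}(X,Y)$, with $\eta$ as small as we like after first pigeonholing the ambient Carleson norm into finitely many subregions. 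This gives $G_L\approx G_{L^{\symmetric}}$ on an ample portion of each tree, which transfers a corona decomposition for one operator into one for the other, at the cost of refining the family of trees and enlarging the packing constants; Theorem~\ref{thm:corona} then converts this back into the desired equivalence of corona decompositions, and of partial/weak Carleson measure estimates, for $L$, $L^{\top}$ and $L^{\symmetric}$. The main obstacle is precisely to carry out this perturbation \emph{without any connectivity hypothesis}: in \cite{CHMT}, where the analogous transference was obtained in $1$-sided chord-arc domains, one freely uses Harnack chains to move poles and compare Green functions at different points, and that tool is unavailable here. The resolution is that the corona structure substitutes for it — inside a single tree the elliptic measure, hence by Theorem~\ref{thm:corona} the Green function with pole above the top, is already comparable to a constant at the relevant scales, supplying exactly the internal connectivity needed for the estimates above to close with constants uniform in the tree. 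A secondary but genuine technical point is the only local Lipschitz regularity of $D$, which forces every integration by parts and the representation formula to be justified by truncation, with \eqref{carleson-div} guaranteeing that the boundary and pole error terms vanish in the limit.
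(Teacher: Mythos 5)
Your approach differs structurally from the paper's. The paper obtains Theorem~\ref{thm:LLT} as an immediate corollary of Theorem~\ref{thm:AAAD}: taking $A_0=A$, $A_1=A^{\top}$ (resp.\ $A_1=(A+A^{\top})/2$), $\widetilde{A}=0$, and $D=A-A^{\top}$ (resp.\ $D=(A-A^{\top})/2$), the hypotheses of Theorem~\ref{thm:AAAD} are met with the condition on $\div_C D$ exactly matching \eqref{carleson-div}. In turn, Theorem~\ref{thm:AAAD} is proved not by comparing Green functions, but by showing that \emph{every bounded $L_1$-solution $u$} satisfies partial/weak Carleson measure estimates, using $G_{L_0}(X_\S,\cdot)$ as the weight in the square-function quantity $\widetilde{\mathcal{J}}_M=\iint |\nabla u|^2\,G_\S\,\Psi_M^2\,dX$. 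The drift error (your $\div_C D$ term) is estimated by Cauchy--Schwarz and then \emph{absorbed}: one arrives at $\widetilde{\mathcal{J}}_M\lesssim \Lambda_\S\sigma(Q_0)+(\widetilde{\mathcal{J}}_M\,\Lambda_\S\,\sigma(Q_0))^{1/2}$, and Young's inequality hides the last term. That is the mechanism that lets a \emph{large} Carleson constant be handled.

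The gap in your plan is precisely the absence of an analogous absorption. You take $u=G_L(\cdot,Y)$ and set up the Duhamel identity $G_L(X,Y)-G_{L^{\symmetric}}(X,Y)=\iint G_{L^{\symmetric}}(X,Z)\,(\div_C D)(Z)\cdot\nabla_Z G_L(Z,Y)\,dZ$, and then want to bound the right side by $\eta\,G_{L^{\symmetric}}(X,Y)$. But after Cauchy--Schwarz the error is controlled by a factor of the form $\bigl(\iint (G_{L^{\symmetric}}(X,\cdot)/\delta)\,|\nabla_Z G_L(\cdot,Y)|^2\,dZ\bigr)^{1/2}$, which is a square function of $\nabla G_L$ weighted by the corona-normalized $G_{L^{\symmetric}}$ — that is, essentially the very quantity whose control is equivalent to what you are trying to prove. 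As written the argument is circular; you would need either an iteration scheme for the Duhamel series or to recast everything as a CME estimate for a generic bounded solution so that the square function appears on both sides and can be hidden, which is exactly what the paper does. In addition, the claim that one can ``pigeonhole the ambient Carleson norm into finitely many subregions'' to make $\eta$ small is not a valid reduction as stated: restricting a Carleson measure to a subcollection of cubes does not shrink its Carleson norm, and obtaining genuine smallness requires a further stopping-time (corona) decomposition of the Carleson measure in \eqref{carleson-div}, which would then need to be coordinated with the corona decomposition for $G_{L^{\symmetric}}$. The paper's Young-inequality absorption makes all of this unnecessary. The reduction of (b)$\Leftrightarrow$(c) to (a)$\Leftrightarrow$(c), the algebraic identity $\div(D\nabla u)=(\div_C D)\cdot\nabla u$ for antisymmetric $D$, and the attention to the local-Lipschitz truncation issue are all correct and consistent with the paper; it is the step closing the error estimate that does not go through as proposed.
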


Back to the Laplacian, by means of Theorem \ref{thm:corona}, we can characterize the uniform rectifiability of the boundary of an open set with an Ahlfors-David regular boundary in terms of any of the preceding equivalent properties. In the case of (full) Carleson measure estimates, \cite[Theorem~1.1]{HMM} shows that any bounded harmonic function in $\ree\setminus E$, with $E$ uniformly rectifiable, satisfies (full) Carleson measure estimates. As a consequence, if $\Omega\subset\ree$ with $\pom$ is uniformly rectifiable then all bounded harmonic functions in $\Omega$ satisfy (full) Carleson measure estimates. The reciprocal is obtained in \cite[Theorem~1.3]{GMT} and the proof uses that uniform rectifiability is equivalent to the boundedness of the Riesz transform in $L^2$ (cf.~\cite{NTV}). Here we show that with the help of Theorem~\ref{thm:corona} the argument in  \cite{HLMN} can be easily adapted to obtain such a characterization. We would like to highlight that as a consequence of our result, we do not need full Carleson measure estimates, that is, partial/weak Carleson measure estimates suffice. Moreover, using some integration by parts argument from \cite{HMT1} and exploiting the fact that for ``good'' coefficients we can invoke Theorem~\ref{thm:LLT}, one can reduce matters to symmetric matrices and characterize the uniform rectifiability of the boundary of an open set with an Ahlfors-David regular boundary in terms of any of the preceding equivalent properties for a natural class of elliptic operators with variable coefficients. We would like to emphasize that our result extends \cite{HMT1}, where the Harnack chain condition was assumed, and, more notably, improves \cite{AGMT}, where the authors need a control of the oscillation of the matrix in points that might be in  different connected components (in our case the oscillations are always in a connected component of the set).

\begin{theorem}\label{thm:UR} 
	Let $\Omega \subset \ree$, $n\ge 2$, be an open set with Ahlfors-David regular boundary satisfying the corkscrew condition, and let $\w$ be the associated harmonic measure. Then $\pom$ is uniformly rectifiability if and only if $\w$ admits a (strong) corona decomposition. Moreover any of the previous conditions is equivalent to the fact that $G$, the associated  Green function,  is comparable to the distance to the boundary in the corona sense and/or all bounded harmonic functions satisfy partial/weak (or full)  Carleson measure estimates.  
	
	Furthermore, the same equivalences hold for  real (not necessarily symmetric) uniformly elliptic operators $Lu=-\div(A \nabla u)$ with $A$ satisfying one of the following conditions:
	
	\begin{list}{\rm (\theenumi)}{\usecounter{enumi}\leftmargin=1cm \labelwidth=1cm \itemsep=0.1cm \topsep=.2cm \renewcommand{\theenumi}{\alph{enumi}}}
		
		\item\label{thm:UR:a} $A \in \Lip_{\loc}(\Omega)$, $	\big\||\nabla A|\,\delta(\cdot)\big\|_{L^\infty(\Omega)}<\infty$ where $\delta(\cdot)=\dist(\cdot,\pom)$, and 
		\begin{equation}\label{carleson-KP}
		\sup_{\substack{x \in \partial \Omega \\ 0<r<\diam(\partial \Omega)} }
		\frac{1}{\sigma(B(x, r) \cap \partial \Omega)} \iint_{B(x, r) \cap \Omega} 
		|\nabla A (X)|dX<\infty.
		\end{equation} 
		
		\item\label{thm:UR:b} $A$ satisfies 
		\begin{equation}\label{carleson-KP:osc}
		\sup_{\substack{x \in \partial \Omega \\ 0<r<\diam(\partial \Omega)} }
		\frac{1}{\sigma(B(x, r) \cap \partial \Omega)} \iint_{B(x, r) \cap \Omega} 
		\frac{\osc (A, X)}{\delta(X)} dX<\infty,
		\end{equation} 
		where $\osc (A, X):=\sup\limits_{Y,Z\in B(X,\delta(X)/2)}|A (Y)-A(Z)|$, for $X\in\Omega$.
		 
	\item\label{thm:UR:c}  $A$ is a Fefferman-Kenig-Pipher perturbation (c.f. \eqref{carleson-perturb}) of the Laplacian or more generally any of the operators in \eqref{thm:UR:a} or \eqref{thm:UR:b}.
	\end{list}

\end{theorem}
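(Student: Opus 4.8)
The plan is to derive Theorem~\ref{thm:UR} from the already stated Theorems~\ref{thm:corona}, \ref{thm:LL} and \ref{thm:LLT}, together with the known link between uniform rectifiability and Carleson measure estimates for bounded harmonic functions and a localized version of the integration by parts argument of \cite{HMT1}. I would begin with the harmonic case. For the implication ``$\pom$ uniformly rectifiable $\Rightarrow$ $\w$ admits a (strong) corona decomposition'' I would invoke \cite[Theorem~1.1]{HMM}: if $\pom$ is uniformly rectifiable then every bounded harmonic function in $\Omega$ satisfies full Carleson measure estimates, hence a fortiori partial/weak ones in the sense of Definition~\ref{def:Carleson}, and Theorem~\ref{thm:corona} (with $L=-\Delta$) upgrades this to a strong corona decomposition for $\w$ and to the comparability of $G$ with $\delta(\cdot)$ in the corona sense. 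For the converse I would use that the proof in \cite{HLMN} that the weak-$A_\infty$ property of harmonic measure forces $\pom$ to be uniformly rectifiable does not exploit the full force of weak-$A_\infty$: it only uses the underlying corona structure --- a packing family of ``bad'' cubes and of tops of trees, on each tree the harmonic measure comparable to a constant after normalization, and the attendant two-sided control of the Green function on the corresponding Whitney regions. Definition~\ref{def:corona} supplies exactly this data, so I would re-run that argument to obtain, via the maximum principle, interior regularity estimates and the boundary comparison principle, the flatness/approximation bounds which, through the corona decomposition characterization of uniform rectifiability used in \cite{HLMN}, yield that $\pom$ is uniformly rectifiable. Combined with Theorem~\ref{thm:corona} this gives all the stated equivalences for $-\Delta$, the equivalence with full Carleson measure estimates being \cite[Theorem~1.1]{HMM} in one direction and the trivial ``full $\Rightarrow$ partial/weak'' implication in the other.

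For the class \eqref{thm:UR:a} I would first note that $A-A^\top\in\Lip_{\loc}(\Omega)$ and, since $|\div_C(A-A^\top)|^2\delta\lesssim|\nabla(A-A^\top)|^2\delta\lesssim\big\||\nabla A|\,\delta(\cdot)\big\|_{L^\infty(\Omega)}\,|\nabla A|$, the Carleson condition \eqref{carleson-div} follows from \eqref{carleson-KP}. Hence Theorem~\ref{thm:LLT} allows me to replace $A$ by its symmetric part, so it suffices to treat a symmetric Kenig-Pipher matrix $A$. For such $A$ I would transfer the comparability of $G$ with $\delta(\cdot)$ in the corona sense between $-\Delta$ and $L$, in both directions, by a suitable adaptation of the integration by parts argument of \cite{HMT1}, carried out inside the Whitney regions attached to the trees of a corona decomposition: on each tree the Kenig-Pipher condition produces error terms which, once summed over the tree and over the family of tops, assemble into a Carleson measure and are therefore controllable. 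With Theorem~\ref{thm:corona} applied to $L$ and to $-\Delta$, together with the harmonic case, this gives that $\w_L$ admits a (strong) corona decomposition if and only if $\pom$ is uniformly rectifiable, and similarly for the reformulations in terms of $G_L$ and of partial/weak Carleson measure estimates; the full Carleson measure estimates version follows along the same lines using the available Carleson measure estimates for bounded $L$-solutions on uniformly rectifiable sets.

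The remaining classes I would reduce to \eqref{thm:UR:a} by perturbation. For \eqref{thm:UR:b} I would mollify $A$ at scale comparable to $\delta(\cdot)$ to obtain $\widetilde A$; then $\widetilde A$ is real and uniformly elliptic, $\widetilde A\in\Lip_{\loc}(\Omega)$ with $|\nabla\widetilde A(X)|\,\delta(X)\lesssim\osc(A,X)\le 2\|A\|_{L^\infty(\Omega)}$, and the normalized Carleson integral of $|\nabla\widetilde A|$ over $B(x,r)\cap\Omega$ is bounded by that of $\osc(A,\cdot)/\delta(\cdot)$ over $B(x,Cr)\cap\Omega$, hence finite by \eqref{carleson-KP:osc}; thus $\widetilde A$ is of type \eqref{thm:UR:a}. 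Moreover $\varrho(A,\widetilde A)(X)\lesssim\osc(A,X)$ (after a routine covering argument to absorb the mild enlargement of the ball), so $\varrho(A,\widetilde A)^2/\delta(\cdot)\lesssim\|A\|_{L^\infty(\Omega)}\,\osc(A,\cdot)/\delta(\cdot)$ is a Carleson measure, i.e.\ $L$ is a Fefferman-Kenig-Pipher perturbation of $L_{\widetilde A}$; Theorem~\ref{thm:LL} then equates the corona decomposition property of $\w_L$ with that of $\w_{L_{\widetilde A}}$, which is equivalent to uniform rectifiability by the previous paragraph. For \eqref{thm:UR:c}, if $A$ is a Fefferman-Kenig-Pipher perturbation of $-\Delta$ or of an operator as in \eqref{thm:UR:a} or \eqref{thm:UR:b}, Theorem~\ref{thm:LL} reduces the corona decomposition property for $\w_L$ to that of the reference operator, which is equivalent to uniform rectifiability by the harmonic case or by the previous two paragraphs; in each case Theorem~\ref{thm:corona} supplies the equivalences with the comparability of $G_L$ and $\delta(\cdot)$ in the corona sense and with partial/weak Carleson measure estimates.

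The hard part will be the symmetric Kenig-Pipher step: transferring the comparability of $G$ with $\delta(\cdot)$ in the corona sense between a symmetric Kenig-Pipher operator and the Laplacian \emph{without} any connectivity hypothesis. In \cite{HMT1} the analogous statement leaned on the Harnack chain condition, which permitted chaining balls and using boundary Harnack and comparison estimates freely; here the only available substitute is the tree/Whitney-region structure of Definition~\ref{def:corona}, so the integration by parts must be carried out one tree at a time and the Kenig-Pipher error terms must be shown to pack into a Carleson measure over the family of tops before they can be absorbed. A related subtlety is to ensure that, after the reductions via Theorems~\ref{thm:LLT} and \ref{thm:LL}, the only oscillation of the coefficients that ever intervenes is that within balls $B(X,\delta(X)/2)$ contained in a single Whitney region --- which is precisely where this argument improves \cite{AGMT}, whose hypotheses involve oscillation between points possibly in different connected components of $\Omega$.
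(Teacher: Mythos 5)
Your reduction of \eqref{thm:UR:b} to \eqref{thm:UR:a} by mollification plus a Fefferman-Kenig-Pipher perturbation, your handling of \eqref{thm:UR:c}, the use of \cite[Theorem~1.1]{HMM} in the sufficiency direction, and the reduction of \eqref{thm:UR:a} to symmetric $A$ via Theorem~\ref{thm:LLT} (deriving \eqref{carleson-div} from \eqref{carleson-KP} and the bound $|\nabla A|\,\delta\le K$) all match the paper. But the central step for symmetric Kenig-Pipher operators --- where you propose to \emph{transfer} the condition ``$G_L$ comparable to $\delta(\cdot)$ in the corona sense'' to the analogous condition for $G_{-\Delta}$ by ``a suitable adaptation of the integration by parts argument of \cite{HMT1}, carried out inside the Whitney regions attached to the trees'' --- is a genuine gap, and it is not what either the paper or \cite{HMT1} does. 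The integration by parts in \cite{HMT1} compares \emph{second derivatives} of $G_L$ with \emph{first derivatives}, i.e.\ it establishes a bound of the form $\iint_{\Omega_{\F,Q_0}}|\nabla^2 G_L|^2\,G_L\,dX\lesssim\sigma(Q_0)$ for a single symmetric Kenig-Pipher $L$; it is not an argument that compares $G_L(X_\S,\cdot)$ with $G_{-\Delta}(X_\S,\cdot)$ or $\omega_L^{X_\S}$ with $\omega_{-\Delta}^{X_\S}$. Comparing Green functions and elliptic measures of two different operators with the same pole normally requires change-of-pole/boundary Harnack machinery, i.e.\ precisely the Harnack chain connectivity that is absent here and that the whole paper is engineered to dispense with. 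Moreover a Kenig-Pipher $A$ need not be a Fefferman-Kenig-Pipher perturbation of the identity ($\varrho(A,I)$ is only bounded, not Carleson), so Theorem~\ref{thm:LL} cannot be used to bridge $L$ and $-\Delta$ either. As stated, the transfer step is circular with the theorem it is meant to prove.

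What the paper does instead, for \eqref{thm:UR:a}, is work \emph{directly} with the $L$-objects: starting from a strong corona decomposition for $\omega_L$ (which by Remark~\ref{remark:Bourgain-proof} may be taken with $Q_\S=\Top(\S)$ and $\omega_L^{X_\S}(Q_\S)\approx 1$), one shows that $\pom$ satisfies the WHSA property and hence is uniformly rectifiable, by re-running \cite[Section~5]{HLMN} with the harmonic estimates replaced by their Kenig-Pipher analogs from \cite{HMT1}: $|\nabla u|\,\delta\lesssim_K u$, interior $W^{2,2}$ bounds, and local H\"older continuity of $\nabla u$. The case-0 cubes are trivial, case-2 cubes give WHSA, and for case-1 cubes one follows \cite[Section~5B]{HLMN} to reduce to a bound on $\iint_{\Omega^*_{\F_\S,R_0}}|\nabla^2\mathcal G|^2\,\mathcal G\,dX$ with $\mathcal G=\sigma(Q_\S)\,G_L(X_\S,\cdot)$, which is then estimated by the integration by parts of \cite{HMT1} together with the corona structure. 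This is where the integration by parts actually enters, and it never compares $L$ to $-\Delta$.

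So to repair your proposal for \eqref{thm:UR:a} you should abandon the $L\leftrightarrow -\Delta$ transfer and instead mimic your own (correct) sketch of the harmonic case: show that the strong corona decomposition for $\omega_L$ (not $\omega_{-\Delta}$) implies WHSA for $\pom$, using the Kenig-Pipher interior regularity of $G_L$ and the \cite{HMT1}-style integration by parts to handle the second-order square function term that appears in the oscillation (case-1) estimate.
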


Let us mention that in the previous result  the corkscrew condition cannot be removed, see Remark~\ref{remark:4-corner-CME} below.

Our last result studies the relationship between the corona decomposition associated with $\omega_L$ and the $A_\infty$ or weak-$A_\infty$ properties. Under strong connectivity, that is, in 1-sided chord-arc domains (see Section~\ref{sec:1-sided-CAD}), these notions turn out to be equivalent and, in view of \cite[Theorem~1.1]{CHMT}, we conclude that full and partial/weak Carleson measure estimates are equivalent properties.

\makeatletter 
\renewcommand\p@enumii{}
\makeatother

\begin{theorem}\label{thm:connected}
	Let $\Omega \subset \ree$, $n\ge 2$, be an open set with Ahlfors-David regular boundary satisfying the corkscrew condition and let $Lu=-\div(A \nabla u)$ be a real (not necessarily symmetric) uniformly elliptic operator. 
	
	\begin{list}{\rm (\theenumi)}{\usecounter{enumi}\leftmargin=1cm \labelwidth=1cm \itemsep=0.1cm \topsep=.2cm \renewcommand{\theenumi}{\roman{enumi}}}
		
		\item\label{1-sidedCAD:i} If $\omega_L\in A^{\rm weak}_\infty(\sigma)$ (cf.~Section~\ref{sec:1-sided-CAD}) then $\omega_L$ admits a (strong) corona decomposition. As a result \eqref{list:GL} and \eqref{list:CME} in Theorem \ref{thm:corona} hold. 
		
		\item\label{1-sidedCAD:ii} If  $\Omega$ is a  $1$-sided CAD (cf.~Section~\ref{sec:1-sided-CAD}), then  following are equivalent: 
		\begin{list}{\rm (\theenumii)}{\usecounter{enumii}\leftmargin=1cm \labelwidth=1cm \itemsep=0.1cm \topsep=.1cm \renewcommand{\theenumii}{\alph{enumii}}}
			
			\item\label{list:con-Ainfty} $\omega_L \in A_{\infty}(\sigma)$ (cf.~Section~\ref{sec:1-sided-CAD}).
			
			\item\label{list:con-corona}     $\omega_L$ admits a (strong) corona decomposition
			
			\item\label{list:con-Green} $G_L$ is comparable to the distance to the boundary in the corona sense.  
									
			\item\label{list:con-partial-CME} $L$ satisfies partial/weak Carleson measure estimates. 
			
			\item\label{list:con-full-CME} $L$ satisfies full Carleson measure estimates (cf.~Definition~\ref{def:Carleson}).   
			
		\end{list}

		\item\label{1-sidedCAD:iii} Let $\Omega$ be a  $1$-sided CAD and let $L$ be the  Laplacian; or a Kenig-Pipher operator, that is, an operator as in Theorem~\ref{thm:UR} part \eqref{thm:UR:a} but where \eqref{carleson-KP} is relaxed by replacing $|\nabla A|$  by $|\nabla A|^2\delta(\cdot)$; or an operator as in Theorem~\ref{thm:UR} part \eqref{thm:UR:b} but where \eqref{carleson-KP:osc} is relaxed by replacing $\osc (A,\cdot)$ by $\osc (A,\cdot)^2$; or a Fefferman-Kenig-Pipher perturbation of one of the previous  operators. Then any of the conditions \eqref{list:con-Ainfty}--\eqref{list:con-full-CME} is equivalent to the fact that $\Omega$ is a CAD or $\pom$ is uniformly rectifiable.  
		
\end{list}	
\end{theorem}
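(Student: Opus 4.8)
The plan is to treat the three parts in order, using Theorems~\ref{thm:corona}, \ref{thm:LL} and \ref{thm:UR}, the equivalence between $\omega_L\in A_\infty(\sigma)$ and full Carleson measure estimates in a $1$-sided CAD from \cite[Theorem~1.1]{CHMT}, and the geometric characterizations of $\omega_L\in A_\infty(\sigma)$ for the Laplacian \cite{AHMNT,DJ,HM,HMU} and for Kenig--Pipher operators \cite{HMMTZ} (see also \cite{HMT1,KP}). For \eqref{1-sidedCAD:i}, since Theorem~\ref{thm:corona} already yields \eqref{list:GL} and \eqref{list:CME} from a strong corona decomposition, it suffices to build such a decomposition out of $\omega_L\in A^{\rm weak}_\infty(\sigma)$. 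I would run, over each dyadic cube $Q_0\subset\pom$ with pole $X_{Q_0}$ at height comparable to $\ell(Q_0)$, the stopping-time construction implicit in \cite{HLMN} and formalized in \cite{GMT,AGMT}, adapted from harmonic measure to a general, possibly non-symmetric elliptic measure: descend from $Q_0$, stopping a cube the first time the normalized density $\frac{\omega_L^{X_{Q_0}}(Q)}{\sigma(Q)}\,\frac{\sigma(Q_0)}{\omega_L^{X_{Q_0}}(Q_0)}$ leaves a fixed window $[\lambda^{-1},\lambda]$ (or sending it to the bad family when a fixed structural requirement fails), so that the coherent cubes form trees on which the normalized averages are comparable to a constant. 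The weak-$A_\infty$ hypothesis, together with Bourgain-type nondegeneracy and the comparison estimates available under corkscrew and ADR alone, supplies the good-$\lambda$ bound forcing the tops and the bad cubes to obey the Carleson packing condition; Theorem~\ref{thm:corona} then also delivers \eqref{list:GL} and \eqref{list:CME}.

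For \eqref{1-sidedCAD:ii}, by Theorem~\ref{thm:corona} the conditions \eqref{list:con-corona}, \eqref{list:con-Green} and \eqref{list:con-partial-CME} are equivalent with no connectivity assumption, \eqref{list:con-full-CME}$\Rightarrow$\eqref{list:con-partial-CME} is immediate, and part \eqref{1-sidedCAD:i} (via $A_\infty(\sigma)\subset A^{\rm weak}_\infty(\sigma)$) gives \eqref{list:con-Ainfty}$\Rightarrow$\eqref{list:con-corona}; it remains to close the cycle with \eqref{list:con-partial-CME}$\Rightarrow$\eqref{list:con-full-CME}$\Rightarrow$\eqref{list:con-Ainfty}, the latter being \cite[Theorem~1.1]{CHMT} in a $1$-sided CAD. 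For \eqref{list:con-partial-CME}$\Rightarrow$\eqref{list:con-full-CME} I would invoke the Harnack chain condition: in a $1$-sided CAD the (suitably fattened) Whitney region attached to a dyadic cube $Q$ and the corkscrew point $X_Q$ lie in a bounded number of Harnack balls at scale $\ell(Q)$, so Caccioppoli's inequality along these chains recovers the energy over the discarded components of the Whitney regions from the energy over the single component retained in the partial/weak estimate, up to an error proportional to $\|u\|_\infty^2\sigma(Q)$ that is absorbed after summing over Carleson boxes. Alternatively one can route through $A_\infty$: in a $1$-sided CAD the change of pole and the comparison principle turn the tree-wise comparability of \eqref{list:con-corona} into two-sided normalized density bounds, whence, using the packing of the tops, $\omega_L\in A_\infty(\sigma)$, and then \cite[Theorem~1.1]{CHMT} returns \eqref{list:con-full-CME}.

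For \eqref{1-sidedCAD:iii}, combine \eqref{1-sidedCAD:ii} with the rectifiability dictionary. If $L$ is the Laplacian, Theorem~\ref{thm:UR} (valid already under corkscrew and ADR) gives that $\omega$ admits a corona decomposition iff $\pom$ is uniformly rectifiable, which in a $1$-sided CAD is equivalent to the exterior corkscrew condition, i.e.\ to $\Omega$ being a CAD \cite{AHMNT,HMU,DJ,HM}. If $L$ is a Kenig--Pipher operator in the relaxed sense of \eqref{1-sidedCAD:iii}, \cite{HMMTZ} (see also \cite{HMT1,KP}) gives that in a $1$-sided CAD $\omega_L\in A_\infty(\sigma)$ iff $\pom$ is uniformly rectifiable iff $\Omega$ is a CAD. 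If $L=L_1$ is a Fefferman--Kenig--Pipher perturbation of such an $L_0$, Theorem~\ref{thm:LL} transfers the corona decomposition between $L_0$ and $L_1$, so $\omega_{L_1}$ admits one iff $\pom$ is uniformly rectifiable. In every case, feeding this back into \eqref{1-sidedCAD:ii} applied to the operator at hand shows that each of \eqref{list:con-Ainfty}--\eqref{list:con-full-CME} is equivalent to uniform rectifiability of $\pom$, equivalently to $\Omega$ being a CAD.

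The main obstacle is the implication \eqref{list:con-partial-CME}$\Rightarrow$\eqref{list:con-full-CME} (equivalently \eqref{list:con-corona}$\Rightarrow$\eqref{list:con-Ainfty}) in \eqref{1-sidedCAD:ii}: one must match the single ``component'' retained in Definition~\ref{def:Carleson} with the concrete geometry of the Whitney regions and verify that the Harnack chain condition recovers the discarded components uniformly across scales (or, on the $A_\infty$ route, that the change of pole together with the packing genuinely upgrades the corona control). The construction in \eqref{1-sidedCAD:i} also requires care, since the corona machinery of \cite{HLMN} has to be pushed through for a general elliptic operator using only the potential theory available without connectivity.
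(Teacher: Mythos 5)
Parts \eqref{1-sidedCAD:i} and \eqref{1-sidedCAD:iii} of your plan essentially match the paper's: for \eqref{1-sidedCAD:i} the paper runs exactly the stopping-time construction you describe, via \cite[Lemma~4.12]{HLMN} and Proposition~\ref{pro:global}; for \eqref{1-sidedCAD:iii} the route through Theorem~\ref{thm:UR} together with \cite{AHMNT,HMMTZ,CHMT} is the one used.

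The gap is in your primary mechanism for \eqref{list:con-partial-CME}$\Rightarrow$\eqref{list:con-full-CME} in \eqref{1-sidedCAD:ii}. You propose to recover the energy over the discarded Whitney components by Caccioppoli along Harnack chains, accepting ``an error proportional to $\|u\|_\infty^2\sigma(Q)$ that is absorbed after summing over Carleson boxes.'' But Caccioppoli only yields the trivial bound $\iint_{I}|\nabla u|^2\,\delta\,dX\lesssim \|u\|_\infty^2\,\ell(Q)^n\approx\|u\|_\infty^2\,\sigma(Q)$ for each Whitney cube, and $\sum_{Q\in\D_{Q_0}}\sigma(Q)=\infty$; the Harnack chain condition does not turn this error into something summable, because it transfers pointwise values of $u$, not $L^2$-averages of $\nabla u$. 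There is no local estimate that passes $\iint|\nabla u|^2$ from one ball to a neighboring one with a controllably small remainder. This is precisely why the paper never tries to upgrade \eqref{list:con-partial-CME} to \eqref{list:con-full-CME} directly. Instead it closes the cycle through \eqref{list:con-corona}$\Rightarrow$\eqref{list:con-Ainfty}, the argument you sketch as an ``alternative,'' and the key tool you omit is the extrapolation of Carleson measures, Lemma~\ref{lem:extrapolation} (i.e.\ \cite[Lemma~8.5]{HM}). The corona decomposition alone does not yield $A_\infty$ by ``packing of the tops''; one must verify the smallness hypothesis \eqref{eq:mF-small} on the discrete sawtooth, place the sawtooth inside a single tree $\S_0$, and use the change of pole formula (this is where Harnack chains genuinely enter) to compare $\omega_L^{X_{Q^0}}$ with $\omega_L^{X_{\S_0}}$ and derive the comparability \eqref{eq:wQQ} that feeds \eqref{eq:PF-Ainfty}. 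Without the extrapolation lemma as the hinge, your second route is a restatement of the conclusion rather than a proof.
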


\makeatletter 
\renewcommand\p@enumii{\theenumi}
\makeatother

The paper is organized as follows. In Section \ref{sec:pre}, we present some preliminaries, definitions, and some background results that will be used throughout the paper. Sections \ref{sec:corona} is devoted to proving our main result, Theorem~\ref{thm:corona}. In Section \ref{sec:perturbation} we prove Theorems~\ref{thm:LL} and~\ref{thm:LLT}. These results turn out to be particular cases of a more general result, Theorem~\ref{thm:AAAD}, 
which is interesting on its own right. The proof of Theorem~\ref{thm:UR} is given in Section~\ref{section:UR}, the argument is just sketched because our results allow us to reduce matters to an argument essentially contained in \cite{HLMN}. Finally, in Section~\ref{sec:1-sided-CAD} we obtain Theorem~\ref{thm:connected}, where for most of the argument the open set satisfies the Harnack chain condition, in which case ``the change of pole formula'' is at our disposal.

\section{Preliminaries}\label{sec:pre}

\subsection{Notation and definitions}

\begin{list}{$\bullet$}{\leftmargin=0.4cm  \itemsep=0.2cm}
	
	\item  Our ambient space is $\ree$, $n\ge 2$.
	
	\item We use the letters $c$, $C$ to denote harmless positive constants, not necessarily the same at each occurrence, which depend only on dimension and the constants appearing in the hypotheses of the theorems (which we refer to as the ``allowable parameters''). We shall also sometimes write $a\lesssim b$ and $a\approx b$ to mean, respectively, that $a\leq C b$ and $0<c\leq a/b\leq C$, where the constants $c$ and $C$ are as above, unless explicitly noted to the contrary. Moreover, if $c$ and $C$ depend on some given parameter $\eta$, which is somehow relevant, we write $a\lesssim_\eta b$ and $a\approx_\eta b$. At times, we shall designate by $M$ a particular constant whose value will remain unchanged throughout the proof of a given lemma or proposition, but which may have a different value during the proof of a different lemma or proposition.
	
	\item Given $E \subset \R^{n+1}$ we write $\diam(E)=\sup_{x, y \in E}|x-y|$ to denote its diameter.

	\item Given an open set $\Omega\subset\re^{n+1}$, we shall use lower case letters $x,y,z$, etc., to denote points on $\partial\Omega$, and capital letters $X,Y,Z$, etc., to denote generic points in $\re^{n+1}$ (especially those in $\Omega$).
	
	
	\item The open $(n+1)$-dimensional Euclidean ball of radius $r$ will be denoted $B(x,r)$ when the center $x$ lies on $\partial\Omega$, or $B(X,r)$ when the center $X\in\re^{n+1}\setminus \partial\Omega$. A ``surface ball'' is denoted $\Delta(x,r):=B(x,r)\cap \partial\Omega$, and unless otherwise specified it is implicitly assumed that $x\in\partial\Omega$. Also if $\partial\Omega$ is bounded, we typically assume that $0<r\lesssim\diam(\partial\Omega)$, so that $\Delta=\partial\Omega$ if $\diam(\partial\Omega)<r\lesssim\diam(\partial\Omega)$.

	\item Given a Euclidean ball $B$ or surface ball $\Delta$, its radius will be denoted by $r_B$ or $r_\Delta$
	respectively.
	
	\item Given a Euclidean ball $B=B(X,r)$ or a surface ball $\Delta=\Delta(x,r)$, its concentric dilate by a factor of $\kappa>0$ will be denoted by $\kappa B=B(X,\kappa r)$ or $\kappa\Delta=\Delta(x,\kappa r)$.
	
	\item For $X\in\re^{n+1}$, we set $\delta(X):=\dist(X,\partial\Omega)$. 
	
	\item We let $\H^n$ denote the $n$-dimensional Hausdorff measure, and let $\sigma:=\H^n |_{\partial \Omega}$ denote the surface measure on $\partial \Omega$. 

	\item For a Borel set $A\subset\re^{n+1}$, we let $\inter(A)$ denote the interior of $A$, and $\overline{A}$ denote the closure of $A$. If $A\subset \partial\Omega$, $\inter(A)$ will denote the relative interior, i.e., the largest relatively open set in $\partial\Omega$ contained in $A$. Thus, for $A\subset \partial\Omega$, the boundary is then well defined by $\partial A:=\overline{A}\setminus\inter(A)$.
	
	\item For a Borel set $A\subset \partial\Omega$ with $0<\sigma(A)<\infty$, we write $\fint_{A}f\,d\sigma:=\sigma(A)^{-1}\int_A f\,d\sigma$.
	
	\item We shall use the letter $I$ (and sometimes $J$) to denote a closed $(n+1)$-dimensional Euclidean cube with 
	sides parallel to the coordinate axes, and we let $\ell(I)$ denote the side length of $I$. We use $Q$ to denote dyadic ``cubes'' on $\partial \Omega$. The latter exist, given that $\partial \Omega$ is Ahlfors-David regular (see \cite{DS1}, \cite{Ch}, and enjoy certain properties which we enumerate in Lemma \ref{lem:dyadic} below).
	
	\item We will use the symbol $\bigsqcup$ to denote a union comprised of pairwise disjoint sets.
	
\end{list}
\medskip


\begin{definition}[Ahlfors-David regular]\label{def:ADR}  
	We say that a closed set $E \subset \R^{n+1}$ is $n$-dimensional Ahlfors-David regular (or simply ADR) 
	if there is some uniform constant $C\ge 1$ such that
	\begin{equation*}
	C^{-1} r^n \leq \H^n(E \cap B(x, r)) \leq C r^n, \qquad \forall\,x \in E, \ r \in (0, 2\,\diam(E)). 
	\end{equation*} 
\end{definition}

\begin{definition}[Uniformly Rectifiable]   
A set $E \subset \R^{n+1}$ is uniformly rectifiable (or simply $UR$) if it is ADR and has big pieces of Lipschitz images of $\Rn$ (BPLI for short). The latter means that there exist $\theta, M > 0$ such that for every $x \in E$ and $r \in(0, \diam(E))$ there is a Lipschitz mapping $\rho=\rho_{x,r}: B_n(0, r) \subset \Rn \to \R^{n+1}$ with $\Lip(\rho) \leq M$ such that 
\begin{equation*}
\H^n (E \cap B(x, r) \cap \rho(B_n(0, r))) \geq \theta r^n. 
\end{equation*} 
When $E=\partial \Omega$, we shall sometimes simply say that ``$\Omega$ has the $UR$ property" to mean that $\partial \Omega$ is $UR$. 
\end{definition}

\begin{definition}[Corkscrew condition]\label{def:CKS} 
We say that an open set $\Omega \subset \R^{n+1}$ satisfies the corkscrew condition if for some uniform 
constant $c \in (0, 1)$, and for every surface ball $\Delta:=\Delta(x, r)$ with $x \in \partial \Omega$ and 
$0< r < \diam(\partial \Omega)$, there is a ball $B(X_{\Delta}, cr) \subset B(x, r) \cap \Omega$. The point 
$X_{\Delta} \in \Omega$ is called a ``corkscrew point'' relative to $\Delta$. We note that we may allow 
$r < C \diam(\partial \Omega)$ for any fixed $C$, simply by adjusting the constant $c$. 
\end{definition} 

We remark that the corkscrew condition is a quantitative, scale-invariant version of openness.

Once we have stated our main results, we give the precise definitions of all the previous concepts so to have a rigorous idea of what we are handling. First of all we proceed to define what we understand by the different corona decompositions present in Theorems~\ref{thm:corona} and \ref{thm:connected}.  

\begin{definition}\label{def:corona-basic}
	Let $\Omega \subset \ree$, $n\ge 2$, be an open set with Ahlfors-David regular boundary and let $\D:=\D(\pom)$ denote a dyadic grid on $\pom$ (cf.~Section~\ref{section:dyadic}). 
	
	\begin{list}{$\bullet$}{\leftmargin=.8cm \labelwidth=.8cm \itemsep=0.2cm \topsep=.2cm }
		\item A subcollection $\S \subset \D$ is semi-coherent if the following properties hold: 
		
		\begin{list}{\textup{(\theenumi)}}{\usecounter{enumi}\leftmargin=.8cm \labelwidth=.8cm \itemsep=0.2cm \topsep=.2cm \renewcommand{\theenumi}{\alph{enumi}}}
			\item $\S$ contains a unique maximal element, denoted by $\Top(\S)$, which contains all other elements of $\mathbf{S}$ as subsets, that is, if $Q\in\S$ then $Q\subset \Top(\S)$.
			
			\item If $Q \in \S$ and $Q \subset Q' \subset \Top(\S)$, then $Q' \in \S$.
		\end{list}
		
		\item A subcollection $\S \subset \D$ is coherent if it is semi-coherent and additionally satisfies that for every $Q\in\S$ either all of its children belong to $\S$, or else none of them do.

		\item A semi-coherent (resp. coherent) corona decomposition with constant $M_0\ge 1$ is a triple $(\B,\G,\bbF)$, where $\B$ and $\G$ are two subsets of $\D$ (the ``bad cubes'' and the ``good cubes'') and $\bbF$ is a family of subsets of $\G$ which satisfy the following conditions:
		\begin{list}{\textup{(\theenumi)}}{\usecounter{enumi}\leftmargin=.8cm \labelwidth=.8cm \itemsep=0.2cm \topsep=.2cm \renewcommand{\theenumi}{\alph{enumi}}}
			
			\item $\D=\G \bigsqcup \B$.
			
			\item $\G=\bigsqcup_{\S\in \bbF} \S$, where each $\S$ is semi-coherent (resp. coherent). 
			
			\item The collection $\B$ and the family of top cubes $\Top(\bbF):=\{\Top(\S):\S\in\bbF\}$ satisfy a Carleson packing condition:
			\begin{equation*}
			\sup_{Q \in \D} \Big(\frac1{\sigma(Q)}\sum_{Q' \in \B: Q' \subset Q} \sigma(Q') + \frac1{\sigma(Q)}\sum_{Q'\in \Top(\bbF): Q' \subset Q} \sigma(Q') 
			\Big)\le M_0.
			\end{equation*}
		\end{list} 
		
	\end{list}
	
\end{definition}

Let $(\B_0,\G_0,\bbF_0)$ be a semi-coherent corona decomposition with constant $M_0\ge 1$. As observed in \cite[pp.~56--57]{DS2}, there exists a different partition $\bbF_1$ of $\G_0$ such that $(\B_0,\G_0,\bbF_1)$ is a coherent corona decomposition with constant $M_1\ge 1$ depending on $n$, Ahlfors-David regularity, and $M_0$; and, additionally, for every $\S\in \bbF_1$, there exists $\S'\in\bbF_0$ such that $\S\subset \S'$.

Given $N\ge 0$ we say that two cubes $Q_1,Q_2\in\D$ are $2^N$-close if
\[
2^{-N}\,\ell(Q_1) \le \ell(Q_2)
\le
2^N\,\ell(Q_1),
\qquad
\dist(Q_1,Q_2)\le 2^N\,(\ell(Q_1)+\ell(Q_2)). 
\]
With this definition in mind, for every $N\ge 0$ we can invoke \cite[Lemma~3.26]{DS2} to find a new coherent corona decomposition $(\B,\G,\bbF)$ (depending on $N$) so that for every $\S\in\bbF$ there exists $\S'\in \bbF_1$ such that
\[
\S\subset \bigcup_{Q\in\S} \big\{Q'\in\D: \text{$Q$ and $Q'$ are $2^N$-close}\big\}\subset \S'.
\]
Moreover, the associated constant for the new corona decomposition depends on $n$, Ahlfors-David regularity, $N$, and  $M_1$. Putting everything together one has the following:

\begin{lemma}\label{lemma:corona-N-close}
	Let $(\B_0,\G_0,\bbF_0)$ be a semi-coherent corona decomposition with constant $M_0\ge 1$. For every $N\ge 0$, there exists a coherent corona decomposition $(\B,\G,\bbF)$ (depending on $N$) ---called the $2^N$-refinement of $(\B_0,\G_0,\bbF_0)$--- such that the associated constant depends on $n$, Ahlfors-David regularity, $N$, and  $M_0$; and, moreover, for every $\S\in\bbF$ there exists $\S'\in \bbF_0$ such that
	\[
	\S\subset \S(N):=\bigcup_{Q\in\S} \big\{Q'\in\D: \text{$Q$ and $Q'$ are $2^N$-close}\big\}\subset \S'.
	\]
\end{lemma}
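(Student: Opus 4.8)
The plan is to obtain Lemma~\ref{lemma:corona-N-close} by simply chaining the two refinement steps recalled immediately before its statement. First I would start from the given semi-coherent corona decomposition $(\B_0,\G_0,\bbF_0)$ with constant $M_0$ and invoke the David--Semmes coherentization procedure \cite[pp.~56--57]{DS2}: this produces a new partition $\bbF_1$ of the \emph{same} good set $\G_0$ such that $(\B_0,\G_0,\bbF_1)$ is a coherent corona decomposition, with constant $M_1$ depending only on $n$, the Ahlfors--David regularity constant, and $M_0$, and with the nesting property that each $\S_1\in\bbF_1$ is contained in some $\S'\in\bbF_0$. Note that the bad cubes and the good cubes are untouched at this stage; only the organization of $\G_0$ into trees changes.

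Next, for the fixed $N\ge 0$ I would apply \cite[Lemma~3.26]{DS2} to the coherent decomposition $(\B_0,\G_0,\bbF_1)$. This yields a coherent corona decomposition $(\B,\G,\bbF)$ --- the desired $2^N$-refinement --- whose associated packing constant depends on $n$, the Ahlfors--David regularity constant, $N$, and $M_1$, hence, after substituting the dependence of $M_1$, on $n$, Ahlfors--David regularity, $N$, and $M_0$; and such that for every $\S\in\bbF$ there exists $\S_1\in\bbF_1$ with
\[
\S\subset\S(N):=\bigcup_{Q\in\S}\big\{Q'\in\D:\text{$Q$ and $Q'$ are $2^N$-close}\big\}\subset\S_1.
\]

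Finally I would compose the two inclusions. Given $\S\in\bbF$, choose $\S_1\in\bbF_1$ as in the previous step and then $\S'\in\bbF_0$ with $\S_1\subset\S'$ as provided by the coherentization; then $\S\subset\S(N)\subset\S_1\subset\S'$, so in particular $\S\subset\S(N)\subset\S'$ with $\S'\in\bbF_0$, which is exactly the conclusion of the lemma. The constant count has already been arranged in the two preceding paragraphs, so nothing further is needed.

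There is no real obstacle here: the statement is a bookkeeping consolidation of \cite[pp.~56--57]{DS2} and \cite[Lemma~3.26]{DS2}. The only points that require a little attention are (i) tracking the dependence of the final constant through the two successive applications, and (ii) observing that the auxiliary set $\S(N)$ appearing in \cite[Lemma~3.26]{DS2} --- which is built from cubes $2^N$-close to members of the \emph{new} tree $\S$ --- is literally the set $\S(N)$ in the statement of the lemma, so that the chain of inclusions closes up as written.
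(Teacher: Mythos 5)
Your proposal is correct and follows exactly the route the paper takes: the lemma is simply a packaged form of the two refinement steps recalled in the text immediately preceding it (David--Semmes coherentization followed by an application of \cite[Lemma~3.26]{DS2}), with the final inclusion obtained by composing $\S\subset\S(N)\subset\S_1\subset\S'$, and the constant dependence tracked through the two applications exactly as you describe.
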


\begin{definition}\label{def:corona}
	Let $\Omega \subset \ree$, $n\ge 2$, be an open set with Ahlfors-David regular boundary satisfying the corkscrew condition, and let $Lu=-\div(A\nabla u)$ be a real (non-necessarily symmetric) uniformly elliptic operator. Denote by $\omega_L$ and $G_L$ the associated elliptic measure and the Green function respectively. Let $\D:=\D(\pom)$ denote a dyadic grid on $\pom$.
	
	\begin{list}{\rm (\theenumi)}{\usecounter{enumi}\leftmargin=1cm \labelwidth=1cm \itemsep=0.1cm \topsep=.2cm \renewcommand{\theenumi}{\roman{enumi}}}
		
		\item We say that $\omega_L$ admits a (coherent/semi-coherent) strong corona decomposition if there exists a (coherent/semi-coherent) corona decomposition $(\B,\G,\bbF)$ such that for each $\S\in\bbF$ there exist $Q_{\S}\in\D$ and $X_{\S}\in\Omega$ with
		\begin{equation}\label{corona-w-strong:CKS-S}
		\Top(\S)\subset Q_\S,
		\qquad  
		\delta(X_{\S}) \approx \ell(Q_{\S}) \approx \dist(X_{\S}, Q_{\S}),
		\end{equation}
		and
		\begin{equation}\label{corona-w-strong:Aver-S}
		\frac{\omega_L^{X_{\S}}(Q_{\S})}{\sigma(Q_{\S})} 
		\lesssim \frac{\omega_L^{X_{\S}}(Q)}{\sigma(Q)} 
		\lesssim \bigg(\fint_Q (\mathcal{M} \omega_L^{X_{\S}})^{\frac12} d\sigma \bigg)^2
		\lesssim 
		\frac{\omega_L^{X_{\S}}(Q_{\S})}{\sigma(Q_{\S})} , \quad\forall Q \in \S. 
		\end{equation}

		\item We say that $\omega_L$ admits a (coherent/semi-coherent)  corona decomposition if there exists a (coherent/semi-coherent) corona decomposition $(\B,\G,\bbF)$ such that for each $\S\in\bbF$ there exist $Q_{\S}\in\D$ and $X_{\S}\in\Omega$ with
		\begin{equation}\label{corona-w:CKS-S}
		\Top(\S)\subset Q_\S,
		\qquad  
		\delta(X_{\S}) \approx \ell(Q_{\S}) \approx \dist(X_{\S}, Q_{\S}),
		\end{equation}
		and
		\begin{equation}\label{corona-w:Aver-S}
		\frac{\omega_L^{X_{\S}}(Q_{\S})}{\sigma(Q_{\S})} 
		\lesssim \frac{\omega_L^{X_{\S}}(Q)}{\sigma(Q)} 
		\lesssim \frac{\omega_L^{X_{\S}}(2\widetilde{\Delta}_Q)}{\sigma(2\widetilde{\Delta}_Q)} 
		\lesssim \frac{\omega_L^{X_{\S}}(Q_{\S})}{\sigma(Q_{\S})}, \quad\forall Q \in \S 
		\end{equation}
		(see \eqref{eq:BQ} for the definition of $\widetilde{\Delta}_Q$).

		\item We say that $G_L$ is comparable to the distance to the boundary in the corona sense if there exists a (coherent/semi-coherent) corona decomposition $(\B,\G,\bbF)$ such that for each $\S\in\bbF$ there exist $Q_{\S}\in\D$ and $X_{\S}\in\Omega$ with
		\begin{equation}\label{corona-G:CKS-S}
		\Top(\S)\subset Q_\S,
		\qquad  \delta(X_{\S}) \ge 4\Xi \ell(Q_\S), \qquad \dist(X_{\S}, Q_\S) \lesssim \ell(Q_\S),
		\end{equation}
		and
		\begin{equation}\label{corona-G:CFMS}
		\sup_{\substack{X \in 2\,\widetilde{B}_Q\cap\Omega \\ \delta(X) \ge c\,\ell(Q)}} 
		\frac{G_L(X_{\S}, X)}{\delta(X)} \approx \frac{\omega_L^{X_{\S}}(Q_\S)}{\sigma(Q_\S)}, \quad\forall Q \in \S,
		\end{equation}
		for some $c\in (0,\frac12)$ (see \eqref{eq:BQ} for the meaning of $\widetilde{B}_Q$ and the parameter $\Xi$).
	\end{list}
In the previous conditions it is understood that the implicit constants are all uniform, that is, the same constant in each estimate is valid for all $Q\in\S$ and for all $\S\in\bbF$.  
\end{definition}

\begin{remark}\label{remark:corona-G:CFMS:coherent}
	In the previous definitions we may always assume that the corona decompositions are formed by coherent subregimes. Indeed, if $(\B,\G,\bbF)$ is a semi-coherent corona decomposition with constant $M_0$ as in (i) (the cases (ii) and (iii) are treated identically) we can invoke \cite[pp.~56--57]{DS2}, to see that there is a different partition $\bbF'$ of $\G$ such that $(\B,\G,\bbF')$ is a coherent corona decomposition with constant $M_1\ge 1$ depending on $n$, Ahlfors-David regularity, and $M_0$; and, additionally, for every $\S\in \bbF'$, there exists $\widehat{\S}\in\bbF$ such that $\S\subset \widehat{\S}$. Given then $\S\in \bbF'$, we take $\widehat{\S}\in\bbF$ such that $\S\subset \widehat{\S}$. Set $Q_\S:=Q_{\widehat{\S}}$ and $X_\S:=X_{\widehat{\S}}$. Since  $\S\subset \widehat{\S}$ we have that $\Top(\S)\subset\Top(\widehat{\S})\subset Q_{\widehat{\S}}=Q_\S$ and the other two conditions in \eqref{corona-w-strong:CKS-S} follow automatically by construction. The same occurs with \eqref{corona-w-strong:Aver-S}, which holds for the cubes in $\widehat{\S}$, thus for those in $\S$. This show that $\omega_L$ admits a coherent strong corona decomposition.
	
	The same can be done with  (ii) and (iii) (details are left to the interested reader), hence from now on in the previous definitions we will drop the adjective ``coherent'' or ``semi-coherent'', with the understanding that, if needed, the corona decomposition we start with can be formed by coherent subregimes. 
\end{remark}

\begin{remark}\label{remark:corona-G:CFMS:N-close}
	One can also refine the corona decompositions in the previous definitions so that the required conditions not only hold for the cubes in the good sub-regimes but also in all the nearby cubes. More precisely, let $(\B_0,\G_0,\bbF_0)$ be a corona decomposition with constant $M_0$ as in (ii) (the cases (i) and (iii) are treated identically). Taking into account Lemma~\ref{lemma:corona-N-close} for every $N\ge 0$ we can then find $(\B,\G,\bbF)$, the $2^N$-refinement of $(\B_0,\G_0,\bbF_0)$, which is a coherent corona decomposition with associated constant depending on $n$, Ahlfors-David regularity, $M_0$, and $N$. Moreover, for every $\S\in\bbF$ there exists $\S'\in \bbF_0$ such that $\S\subset\S(N)\subset\S'$. Set $Q_{\S}:=Q_{\S'}$ and $X_{\S}:=X_{\S'}$. Write $\Top(\S)^{(N)}$ for the $N$-th dyadic ancestor of $Q$ (that is the unique dyadic cube containing $Q$ and with sidelenth $2^{N}\ell(Q)$). It is clear that $\Top(\S)^{(N)}$ and $\Top(\S)$ are $2^N$-close. Hence, $\Top(\S)^{(N)}\in\S(N)\subset \S'$ and $\Top(\S)\subset \Top(\S)^{(N)}\subset \Top(\S')\subset Q_{\S'}=Q_\S$. The other two conditions in \eqref{corona-w:CKS-S} follow by construction and we have
	\[
	\Top(\S)\subset \Top(\S)^{(N)}\subset Q_\S,
	\qquad  
	\delta(X_{\S}) \approx \ell(Q_{\S}) \approx \dist(X_{\S}, Q_{\S}),
	\]
	Finally, since $\S(N)\subset\S'$ and by construction $Q_{\S}=Q_{\S'}$ and $X_{\S}=X_{\S'}$, we have 
	\begin{align*}
	\frac{\omega_L^{X_{\S}}(Q_{\S})}{\sigma(Q_{\S})} 
	\lesssim 
	\frac{\omega_L^{X_{\S}}(Q)}{\sigma(Q)} 
	\lesssim 
	\frac{\omega_L^{X_{\S}}(2\widetilde{\Delta}_Q)}{\sigma(2\widetilde{\Delta}_Q)} 
	\lesssim 
	\frac{\omega_L^{X_{\S}}(Q_{\S})}{\sigma(Q_{\S})},
	\quad \forall\,Q \in \S(N).
	\end{align*}
	This means that we can refine the initial corona decomposition so that for every new good sub-regime $\S$ the desired property holds for every $Q\in\S(N)$, that is, for all the cubes that are $N$-close to the ones in $\S$. Also, $\ell(\Top(\S))\le 2^{-N}\,\ell(Q_S)$. Of course the same can be done with the other two definitions, the precise statements are left to the interested reader.  
\end{remark}

\begin{remark}\label{remark:Bourgain:0}
	The reader may wonder whether in (i), (ii), or (iii) it would have been reasonable or convenient to impose  Bourgain's estimate for the family of poles $\{X_\S\}_{\S\in\bbF}$, that is, whether for each $\S\in\bbF$ one would have needed to assume $\omega_L^{X_{\S}}(Q_\S)\gtrsim 1$. As we will show below, see Remark~\ref{remark:Bourgain-proof}, at the cost of possibly changing the given corona decomposition, we may always assume (in each of the items in the definition) that   $\omega_L^{X_{\S}}(Q_\S)\gtrsim 1$ for every $\S\in\bbF$. 
\end{remark}

\begin{definition}\label{def:Carleson} 
	Let $\Omega \subset \ree$, $n\ge 2$, be an open set with Ahlfors-David regular boundary satisfying the corkscrew condition, and let $Lu=-\div(A\nabla u)$ be a real (non-necessarily symmetric) uniformly elliptic operator. Let $\D:=\D(\pom)$ denote a dyadic grid on $\pom$.
	
	\begin{list}{\rm (\theenumi)}{\usecounter{enumi}\leftmargin=1cm \labelwidth=1cm \itemsep=0.1cm \topsep=.2cm \renewcommand{\theenumi}{\roman{enumi}}}
		
		\item We say that $L$ satisfies full Carleson measure estimates if 
		\begin{equation*}
		\sup_{\substack{x \in \pom \\ 0<r<\infty}} \frac{1}{r^n}  
		\iint_{B(x,r) \cap \Omega} |\nabla u(X)|^2 \delta(X) \, dX 
		\lesssim \|u\|_{L^{\infty}(\Omega)}^2,  
		\end{equation*}
		for every bounded weak solution $u \in W^{1,2}_{\loc}(\Omega) \cap L^{\infty}(\Omega)$ of $Lu=0$ in $\Omega$.  
		
		\item We say that $L$ satisfies partial/weak Carleson measure estimates (with parameter $\tau\in (0,\frac12)$)\footnote[2]{Regarding Theorem~\ref{thm:corona}, in $\eqref{list:GL} \Longrightarrow \eqref{list:CME}$ we actually show that partial/weak Carleson measure estimates hold for every $\tau\in (0,\frac12)$, and for $\eqref{list:CME} \Longrightarrow \eqref{list:wL-strong}$ one just needs some fixed $\tau$, provided it is small enough depending on $n$, Ahlfors-David regularity, and ellipticity, namely, $0<\tau<\tau_0$ with $\tau_0$ from Lemma~\ref{lemma:AGMT}. In turn, Theorem~\ref{thm:corona} shows a posteriori that partial/weak Carleson measure estimates for some fixed parameter $\tau_1$ implies the same property for all values of $\tau$ provided $\tau_1$ is sufficiently small depending on the allowable parameters.} if for every $Q \in \D$ there exists $P_Q \in \Omega$ with $\delta(P_Q) \approx \ell(Q)\approx \dist(P_Q,Q)$ such that 
		\begin{equation*}
		\sup_{Q_0 \in \D} \frac{1}{\sigma(Q_0)} \sum_{Q \in \D_{Q_0}} 
		\iint_{B(P_Q, (1-\tau)\delta(P_Q))} |\nabla u(X)|^2 \delta(X) \, dX 
		\lesssim_\tau \|u\|_{L^{\infty}(\Omega)}^2,  
		\end{equation*}
		for every bounded weak solution $u \in W^{1,2}_{\loc}(\Omega) \cap L^{\infty}(\Omega)$ of $Lu=0$ in $\Omega$.  
	\end{list} 
\end{definition}

\begin{remark}\label{remark:Bourgain:1}
	Much as in Remark~\ref{remark:Bourgain:0}, at the cost of possibly changing the collection of points $\{P_Q\}_{Q\in\D}$, we may always assume that $\omega_L^{P_Q}(Q)\gtrsim 1$ for every $Q\in\D$, see  Remark~\ref{remark:Bourgain-proof}. 
\end{remark}

\subsection{Dyadic grids and sawtooths}\label{section:dyadic}
We give a lemma concerning the existence of a ``dyadic grid'', which was proved in \cite{DS1, DS2, Ch}.  

\begin{lemma}\label{lem:dyadic}
Suppose that $E \subset \R^{n+1}$ is an $n$-dimensional $ADR$ set. Then there exist constants
$C_1\ge 1$ and $\gamma>0$ depending only on $n$ and the $ADR$ constant 
such that, for each $k \in \Z$, there is a collection of Borel sets (cubes)
\begin{align*}
\D_k =\{Q_j^k \subset E: j \in \mathfrak{J}_k\}
\end{align*}
where $\mathfrak{J}_k$ denotes some (possibly finite) index set depending on $k$, satisfying: 
\begin{list}{$(\theenumi)$}{\usecounter{enumi}\leftmargin=1cm \labelwidth=1cm \itemsep=0.1cm \topsep=.2cm \renewcommand{\theenumi}{\alph{enumi}}}

\item $E=\bigcup_j Q_j^k$, for each $k \in \Z$. 

\item If $m \leq k$, then either $Q_i^k \subset Q_j^m$ or $Q_i^k \cap Q_j^m = \emptyset$. 

\item For each $k\in\Z$, $j\in \mathfrak{J}_k$, and each $m<k$, there is a unique $i\in \mathfrak{J}_k$ such that $Q_j^k \subset Q_i^m$. 

\item For each $k\in\Z$, $j\in \mathfrak{J}_k$, there is $x_j^k\in E$ such that 
\[
B(x_j^k, C_1^{-1} 2^{-k}) \cap E\subset Q_j^k \subset B(x_j^k, C_1 2^{-k}) \cap E.
\]

\item $\H^n(\{x \in Q_j^k: \dist(x, E \backslash Q_j^k) \leq 2^{-k} \tau\}) \leq C_1 \tau^{\gamma} \H^n(Q_j^k)$ 
for all  $k\in\Z$, $j\in \mathfrak{J}_k$, and $\tau \in (0, 1)$.
\end{list}
\end{lemma}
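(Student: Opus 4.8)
The plan is to reproduce the by now classical net-based construction of Christ and of David--Semmes, whose full details may be found in \cite{Ch,DS1,DS2}; I will only indicate the structure. First I would fix, for each $k\in\Z$, a maximal $2^{-k}$-separated subset $\{x_j^k\}_{j\in\mathfrak{J}_k}$ of $E$. Maximality gives the covering $E=\bigcup_j B(x_j^k,2^{-k})$, while the balls $B(x_j^k,2^{-k}/2)$ are pairwise disjoint, so together with the ADR property this bounds the number of net points in any ball of radius $R\ge 2^{-k}$ by $C(R2^k)^n$. I would arrange these nets to be consistent across scales, nested in the sense $\{x_j^k\}\subset\{x_j^{k+1}\}$, by constructing them inductively from a coarse scale and enlarging at each step to a maximal separated set at the next finer scale (the coarse end of the range $k\in\Z$ needs a little extra bookkeeping, which I would import from \cite{Ch,DS2} and which only affects the value of $C_1$). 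Writing $\mathcal Z:=\bigcup_k\{x_j^k\}$, a set dense in $E$, I would then impose a tree structure on $\mathcal Z$: each $z$ first appearing at level $k+1$ is assigned a parent at level $k$ (roughly the nearest level-$k$ net point, ties broken by a fixed ordering of the indices), so that every $z\in\mathcal Z$ acquires a well-defined level-$k$ ancestor $a_k(z)$ for each $k$ below its birth level, with $k\mapsto a_k(z)$ monotone for the nesting.

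Next I would set $Q_j^k:=\overline{\{z\in\mathcal Z:\ a_k(z)=x_j^k\}}\cap E$ and read off properties (a)--(d). Items (a), (b), (c) are pure bookkeeping on the tree: the descendant sets partition $\mathcal Z$ at each level, a descendant of a descendant is a descendant, and ancestors are unique. For (d) I would use that consecutive ancestors are within $2^{-k}$ of one another, so summing a geometric series gives $|z-a_k(z)|\le 2^{-k+1}$ for all $z\in\mathcal Z$; this yields $Q_j^k\subset B(x_j^k,C_1 2^{-k})\cap E$. For the reverse inclusion $B(x_j^k,C_1^{-1}2^{-k})\cap E\subset Q_j^k$ I would show that, provided $C_1^{-1}$ is small enough depending only on $n$ and the ADR constant, any net point lying in that small ball is forced to have $x_j^k$ as its level-$k$ ancestor, since a competing parent would lie too far away to be the relevant nearest net point; taking closures and using density of $\mathcal Z$ completes this point. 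Here one has to keep careful track of the geometric constants.

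The main work is property (e), which is the only place where ADR is used in an essential way rather than mere doubling. I would first establish a one-scale decay estimate: there are $\varepsilon_0,\theta_0\in(0,1)$, depending only on $n$ and the ADR constant, such that for all $j,k$
\[
\H^n\big(\{x\in Q_j^k:\ \dist(x,E\setminus Q_j^k)\le\varepsilon_0 2^{-k}\}\big)\le(1-\theta_0)\,\H^n(Q_j^k).
\]
The idea is that the near-boundary region at scale $\varepsilon_0 2^{-k}$ is covered by those level-$(k+N)$ subcubes, with $2^{-N}\approx\varepsilon_0$, that meet $E\setminus Q_j^k$; by ADR each such subcube is matched by a comparable amount of $\H^n$-mass sitting safely inside $Q_j^k$, and comparing the two using the two-sided ADR bounds produces the fixed proportion $\theta_0$. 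Iterating this estimate down the tree, and invoking ADR once more to convert ``number of bad generations traversed'' into a power of $\tau$, would give $\H^n(\{x\in Q_j^k:\ \dist(x,E\setminus Q_j^k)\le\tau 2^{-k}\})\le C_1\tau^{\gamma}\H^n(Q_j^k)$ with $\gamma=\gamma(n,\mathrm{ADR})>0$.

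The hard part will be exactly this last step, together with the constant bookkeeping of the first two: for the one-scale estimate to iterate cleanly, the net selection must be set up with some care --- in the treatments of Christ and of David--Semmes this is done either through a mildly randomized choice of the nets or through a deliberate deterministic pigeonholing --- and it must be made compatible with the nesting used to build the tree. For the complete argument I would ultimately defer to \cite{Ch,DS1,DS2}.
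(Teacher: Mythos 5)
The paper does not prove Lemma~\ref{lem:dyadic}; it states it and cites \cite{DS1,DS2,Ch} (with \cite{HMMM} for taking $\delta=1/2$), which is exactly where you end up deferring, so your approach matches the paper's in spirit. Two places in your sketch, however, gloss over genuine delicacies in those references. First, taking $Q_j^k$ to be the closure of a set of descendant net points (intersected with $E$) would let distinct same-generation cubes share boundary points, which already violates (b) at $m=k$; the actual constructions assign each point of $E$ to exactly one cube per level (e.g.\ via a tie-breaking rule compatible with the tree), and it is nontrivial to make this choice coherent across scales. Second, the inner-ball inclusion in (d) does not follow from a bare nearest-parent rule by the chaining bound you describe: for a fine net point $z$ with $|z-x_j^k|<c\,2^{-k}$, the ancestor chain accrues an error of order $2^{-k}$ by the time it reaches level $k$ --- the same order as the level-$k$ separation --- so the level-$k$ ancestor can land on a neighboring net point; the references either arrange the parent assignment more carefully or deduce (d) by a different argument tied to the precise cube definition. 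Once (d) is secured, your mechanism for (e) --- a one-scale decay estimate coming from the inner ball, iterated down the tree and converted into a power of $\tau$ --- is the standard and correct route.
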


A few remarks are in order concerning this lemma. 
\begin{list}{$\bullet$}{\leftmargin=.8cm \labelwidth=.8cm \itemsep=0.2cm \topsep=.2cm }

\item In the setting of a general space of homogeneous type, this lemma has been proved by Christ \cite{Ch}, 
with the dyadic parameter $1/2$ replaced by some constant $\delta \in (0, 1)$. In fact, one may always take $\delta=1/2$ 
(cf. \cite[Proof of Proposition 2.12]{HMMM}). In the presence of the Ahlfors-David property, the result already appears in 
\cite{DS1, DS2}. 

\item Since in the present scenario we have that $\diam(\Delta(x,r))\approx r$ we clearly have that if $E$ is bounded and $k\in\Z$ is such that $\diam(E) < C_1\,2^{-k}$, then there cannot be two distinct cubes in $\D_k$. Thus, $\D_k=\{Q^k\}$ with $Q^k=E$. Therefore, for our purposes, we may ignore those $k \in \Z$ such that $2^{-k} \gtrsim \diam(E)$, in the case that the latter is finite. Hence, we shall denote by $\D(E)$ the collection of all relevant $Q^k_j$, i.e., 
\begin{align*}
\D(E) := \bigcup_{k \in \Z} \D_k,
\end{align*}
where, if $\diam(E)$ is finite, the union runs over $k\ge -k_0$ with $2^{k_0} \approx \diam(E)$ and there exits $Q\in\D_{-k_0}$  so that $E=Q$. 

\item For a dyadic cube $Q \in \D_k$, we shall set $\ell(Q)=2^{-k}$, and we shall refer to this quantity as the 
``length'' of $Q$. Evidently, $\ell(Q) \approx \diam(Q)$. We set $k(Q)=k$ to be the dyadic generation to which $Q$ 
belongs if $Q \in \D_k$; thus, $\ell(Q)=2^{-k(Q)}$. One can easily see that if $Q\in \D_{k}$ and $Q'\in \D_{k'}$ with $Q\subsetneq Q'$ then necessarily $k'<k$. However, it is possible to have two cubes $Q\in\D_k$ and $Q'\in\D_{k'}$, such that $k\neq k'$ and $Q=Q'$. In that case the ADR condition implies that $k\approx k'$. To avoid some technicalities whenever we write $Q\subset Q'$ we will understand (unless otherwise is specified) that matters are organized so that $Q\in \D_{k}$, $Q'\in \D_{k'}$, and $k'<k$.

\item Write $\Xi=2 C_1^2$. Property $(d)$ implies that for each cube $Q \in \D$, there is a point $x_Q \in E$, a Euclidean ball 
$B(x_Q, r_Q)$ and a surface ball $\Delta(x_Q, r_Q) := B(x_Q, r_Q) \cap E$, with $\Xi^{-1}\ell(Q) \leq r_Q \leq \ell(Q)$ (indeed $r_Q=(2C_1)^{-1}\ell(Q))$, such that 
\begin{align}\label{eq:Q-DQ} 
\Delta(x_Q, 2r_Q) \subset Q \subset \Delta(x_Q, \Xi r_Q), 
\end{align}
We shall write 
\begin{align}\label{eq:BQ} 
B_Q := B(x_Q, r_Q),
\quad 
\widetilde{B}_Q := B(x_Q, \Xi r_Q),
\quad 
\Delta_Q := \Delta(x_Q, r_Q), 
\quad 
\widetilde{\Delta}_Q := \Delta(x_Q, \Xi r_Q), 
\end{align}
and we shall refer to the point $x_Q$ as the ``center'' of $Q$.

\item Let $Q\in\D_k$ and consider  the family of its dyadic children $\{Q'\in \D_{k+1}: Q'\subset Q\}$. Note that for any two distinct children $Q', Q''$, one has $|x_{Q'}-x_{Q''}|\ge r_{Q'}=r_{Q''}=r_Q/2$, otherwise $x_{Q''}\in Q''\cap \Delta_{Q'}\subset Q''\cap Q'$, contradicting the fact that $Q'$ and $Q''$ are disjoint. Also $x_{Q'}, x_{Q''}\in Q\subset \Delta(x_Q,r_Q)$, hence by the geometric doubling property we have a purely dimensional bound for the number of such $x_{Q'}$ and hence the number of dyadic children of a given dyadic cube is uniformly bounded.

\end{list}

We next introduce the notation of ``Carleson region'' and ``discretized sawtooth'' from \cite[Section 3]{HM}. 
Given a dyadic cube $Q \in \D(E)$, the ``discretized Carleson region'' $\D_Q$ relative to $Q$ is defined by 
\[ \D_Q := \{Q' \in \D(E) : Q' \subset Q\}. \] 
Let $\F=\{Q_j\} \subset \D(E)$ be a family of pairwise disjoint cubes. The ``global discretized sawtooth'' relative to $\F$ 
is the collection of cubes $Q \in \D(E)$ that are not contained in any $Q_j \in \F$, that is, 
\begin{equation*}
\D_{\F} :=\D(E) \setminus \bigcup_{Q_j \in \F} \D_{Q_j}.  
\end{equation*}
For a given cube $Q \in \D(E)$, we define the ``local discretized sawtooth'' relative to $\F$ is the collection of cubes in 
$\D_Q$ that are not contained in any $Q_j \in \F$ of, equivalently, 
\begin{equation*}
\D_{\F, Q} :=\D_Q \setminus \bigcup_{Q_j \in \F} \D_{Q_j}=\D_{\F} \cap \D_Q. 
\end{equation*} 

We also introduce the ``geometric'' Carleson regions and sawtooths. In the sequel, $\Omega \subset \R^{n+1}$,  
$n \geq 2$, is an open set with ADR boundary and satisfying the corkscrew condition.  Given $Q \in \D:=\D(\partial \Omega)$, we define the ``corkscrew point relative to $Q$'' as 
$X_Q:=X_{\Delta_Q}$. We then note that 
\[ 
\delta(X_Q) \approx \dist(X_Q, Q) \approx \diam(Q). 
\] 

Our next goal is to define some associated regions which inherit the good properties of $\Omega$. Let $\W=\W(\Omega)$ denote a collection of (closed) dyadic Whitney cubes 
of $\Omega$, so that the cubes in $\W$ form a covering of $\Omega$ with non-overlapping interiors, which satisfy 
\begin{equation*}
4 \diam(I) \leq \dist(4I, \partial \Omega) \leq \dist(I, \partial \Omega) \leq 40 \diam(I), \quad \forall\,I \in \W, 
\end{equation*}
and also 
\begin{equation*}
(1/4) \diam(I_1) \leq \diam(I_2) \leq 4 \diam(I_1), \quad\text{whenever $I_1$ and $I_2$ touch}.
\end{equation*}
Let $X(I)$ be the center of $I$ and $\ell(I)$ denote the sidelength of $I$. 

Given $0<\lambda<1$ and $I \in \W$, we write $I^*=(1+\lambda)I$ for the ``fattening'' of $I$. 
By taking $\lambda$ small enough, we can arrange matters, so that for any $I, J \in \W$, 
\begin{equation*}
\begin{aligned} 
\dist(I^*, J^*) & \approx \dist(I, J), \\ 
\operatorname{int}(I^*) \cap \operatorname{int}(J^*) \neq \emptyset & 
\Longleftrightarrow \partial I \cap \partial J \neq \emptyset.  
\end{aligned}
\end{equation*}
(The fattening thus ensures overlap of $I^*$ and $J^*$ for any pair $I, J \in \W$ whose boundaries touch, so that the 
Harnack chain property then holds locally, with constants depending upon $\lambda$, in $I^* \cap J^*$.) By choosing 
$\lambda$ sufficiently small, say $0<\lambda<\lambda_0$, we may also suppose that there is a $\tau \in (1/2, 1)$ such 
that for distinct $I, J \in \W$, we have that $\tau J \cap I^{*}=\emptyset$.  
In what follows we will need to work with the dilations $I^{**}=(1+2\lambda)I$ or $I^{***}=(1+4\lambda)I$, and in 
order to ensure that the same properties hold we further assume that $0<\lambda<\lambda_0/4$.

Given $\vartheta\in\mathbb{N}$,  for every cube $Q \in \D$ we set  
\begin{equation}\label{eq:WQ}
\W_Q^\vartheta :=\left\{I \in \W: 2^{-\vartheta}\ell(Q) \leq \ell(I) \leq 2^\vartheta\ell(Q), \text { and } \dist(I, Q) \leq 2^\vartheta \ell(Q) \right\}.
\end{equation}
We will choose $\vartheta\ge \vartheta_0$,  with $\vartheta_0\ge 6+\log_2 n$ large enough depending on the constants of the corkscrew condition (cf. Definition  \ref{def:CKS})  and in the dyadic cube construction
(cf. Lemma \ref{lem:dyadic}), so that $X_Q \in I$ for some $I \in \W_Q^\vartheta$, and for each dyadic child $Q^j$ of $Q$, the respective corkscrew points $X_{Q^j}\in I^j$ for some $I^j \in \W_Q^\vartheta$. 
Given $I\in\W$ with $\ell(I)\lesssim\diam(\pom)$ and let $Q_I^*$ be one of the nearest dyadic cubes to $I$ so that $\ell(I)=\ell(Q_I^*)$.  Clearly, $\dist(I,Q_I^*)\le 40\sqrt{n+1} \ell(I)=40\sqrt{n+1}\ell(Q_I^*)$. Hence $I\in  \W_{Q_I^*}^\vartheta$ since $\vartheta\ge \vartheta_0$.


Given $Q\in\D$ we define its associated Whitney regions $U_Q^\vartheta$ and $\widehat{U}_Q^\vartheta$ (not necessarily connected) as 
\begin{equation*}
	U_{Q}^\vartheta :=\bigcup_{I \in \W_{Q}^{\vartheta}}I^*,
	\qquad
	U_{Q}^{\vartheta,*} :=\bigcup_{I \in \W_{Q}^{\vartheta}}I^{**},
\end{equation*}
For a given $Q \in \D$, the ``Carleson boxes'' relative to $Q$ are defined by  
\begin{equation*}
	T_{Q}^\vartheta :=\operatorname{int}\bigg(\bigcup_{Q' \in \D_Q} U_{Q'}^\vartheta\bigg),
	\qquad
	T_{Q}^{\vartheta,*} :=\operatorname{int}\bigg(\bigcup_{Q' \in \D_Q} U_{Q'}^{\vartheta,*}\bigg).
\end{equation*}  
For a given family $\F=\{Q_j\}$ of pairwise disjoint cubes and a given $Q \in \D$, we define the 
``local sawtooth regions'' relative to $\F$ by 
\begin{equation*}
	\Omega_{\F, Q}^\vartheta :=\inter\bigg(\bigcup_{Q' \in \D_{\F, Q}} U_{Q'}^\vartheta\bigg), 
	\qquad
		\Omega_{\F, Q}^{\vartheta,*} :=\inter\bigg(\bigcup_{Q' \in \D_{\F, Q}} U_{Q'}^{\vartheta,*}\bigg). 
\end{equation*}

Following \cite{HM}, one can easily see that there exist constants $0<\kappa_1<1$ and 
$\kappa_0 \geq 16\Xi$ (with $\Xi$ the constant in \eqref{eq:Q-DQ}), depending only on the allowable parameters and on $\vartheta$, so that
\begin{align}\label{eq:kappa}
	\kappa_1B_Q \cap \Omega \subset T_Q^\vartheta \subset T^{\vartheta,*}_Q \subset T^{\vartheta,**}_Q \subset \overline{T^{\vartheta,**}_Q} 
	\subset \kappa_0 B_Q \cap \overline{\Omega} =: \frac12 B^*_Q \cap \overline{\Omega}, 
\end{align}
where $B_Q$ is defined as in \eqref{eq:BQ}.

We recall that $\Omega$ is an open set with ADR boundary and this allows us to define  the open set $\Omega':=\ree\setminus \pom$ whose boundary is $\pom'=\pom$. One can then proceed as above and define the associated local sawtooth regions $(\Omega')_{\F, Q}^\vartheta$ with respect to $\Omega'$ (that is, we now have some underlying Whitney decomposition for $\Omega'$ which agrees with $\W(\Omega)$ when restricted to $\Omega$). In \cite[Proposition A.2]{HMM} it was shown that all $(\Omega')_{\F, Q}^\vartheta$ have ADR boundaries and the constants are uniform and depend on $n$, the ADR constant of $\pom'=\pom$, and $\vartheta$. One can easily see that $\partial \Omega_{\F, Q}^\vartheta\subset \partial(\Omega')_{\F, Q}^\vartheta$ since $\Omega_{\F, Q}^\vartheta =(\Omega')_{\F, Q}^\vartheta\cap \Omega$. Thus, it is trivial to obtain that all local sawtooth regions $\Omega_{\F, Q}^\vartheta$ have boundary satisfying the upper ADR condition (that is only the upper estimate) albeit with bounds that are uniform and depend on $n$, the ADR constant of $\pom$, and $\vartheta$. In short we have the following:

\begin{lemma}\label{lemma:upper-ADR-sawtooth}
Let $\Omega \subset \ree$, $n\ge 2$, be an open set with Ahlfors-David regular boundary satisfying the corkscrew condition.  For every $\vartheta\ge \vartheta_0$, all sawtooth domains $\Omega_{\F, Q}^\vartheta$ and $\Omega_{\F, Q}^{\vartheta,*}$ have upper ADR boundary (that is, they satisfy the upper bound in Definition \ref{def:ADR}). Moreover, the implicit constants are uniform and depend only on
dimension, the ADR constant of $\pom$ and the parameter $\vartheta$.
\end{lemma}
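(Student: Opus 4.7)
My plan is to reduce the upper ADR bound on $\partial\Omega_{\F,Q}^\vartheta$ to a full ADR bound on a companion sawtooth built from the auxiliary open set $\Omega':=\ree\setminus\pom$, which is precisely the strategy already hinted at in the paragraph just before the lemma. The auxiliary set satisfies $\pom'=\pom$ and is hence ADR with the same constant as $\pom$; moreover, since the defining distance $\dist(\,\cdot\,,\pom')=\dist(\,\cdot\,,\pom)$ is the same as the one used for $\Omega$, one may choose the Whitney decomposition $\W(\Omega')$ so that $\W(\Omega')\cap\Omega$ coincides with $\W(\Omega)$. Using the same dyadic grid $\D$, the same family $\F$, the same cube $Q$, and the same parameters $\vartheta$ and $\lambda$, I would then form the sawtooth $(\Omega')_{\F,Q}^\vartheta$ and invoke \cite[Proposition A.2]{HMM} to conclude that $\partial(\Omega')_{\F,Q}^\vartheta$ is (fully) ADR with constants depending only on $n$, the ADR constant of $\pom$, and $\vartheta$.

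The crux is then to establish the set-theoretic inclusion
\[
\partial\Omega_{\F,Q}^\vartheta\subset \partial(\Omega')_{\F,Q}^\vartheta.
\]
For this I would first verify the identity $\Omega_{\F,Q}^\vartheta = (\Omega')_{\F,Q}^\vartheta\cap \Omega$, which is immediate from the compatibility of the two Whitney decompositions on the $\Omega$-side, since both sawtooths are defined as the interior of the same collection $\bigcup_{Q'\in \D_{\F,Q}}U_{Q'}^\vartheta$ of fattened Whitney cubes lying in $\Omega$. From the identity, any $x\in \partial\Omega_{\F,Q}^\vartheta$ lies in $\overline{(\Omega')_{\F,Q}^\vartheta}$. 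If $x$ belonged to the open set $(\Omega')_{\F,Q}^\vartheta$, then either $x\in\Omega$ (in which case a small neighborhood of $x$ would lie in $(\Omega')_{\F,Q}^\vartheta\cap\Omega=\Omega_{\F,Q}^\vartheta$, contradicting $x\in\partial\Omega_{\F,Q}^\vartheta$), or $x\in\pom$ (contradicting $(\Omega')_{\F,Q}^\vartheta\subset\Omega'=\ree\setminus\pom$). Hence $x\in\partial(\Omega')_{\F,Q}^\vartheta$, as desired.

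With the inclusion in hand the upper bound follows at once: for every $x\in\partial\Omega_{\F,Q}^\vartheta$ and $r>0$,
\[
\H^n\bigl(B(x,r)\cap \partial\Omega_{\F,Q}^\vartheta\bigr)
\le \H^n\bigl(B(x,r)\cap \partial(\Omega')_{\F,Q}^\vartheta\bigr)
\le C r^n,
\]
by the ADR property of $\partial(\Omega')_{\F,Q}^\vartheta$, and the constant $C$ depends only on the allowable parameters and on $\vartheta$. The argument for $\Omega_{\F,Q}^{\vartheta,*}$ is verbatim, replacing $I^*$ by $I^{**}$ throughout (and using the analogous auxiliary sawtooth $(\Omega')_{\F,Q}^{\vartheta,*}$). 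The only mildly delicate point I anticipate is the verification of the identity $\Omega_{\F,Q}^\vartheta=(\Omega')_{\F,Q}^\vartheta\cap \Omega$, but this is a purely combinatorial matter that reduces to fixing compatible Whitney decompositions on both sides; no analytic ingredient beyond the cited \cite[Proposition A.2]{HMM} is needed.
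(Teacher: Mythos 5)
Your proposal is correct and follows essentially the same route as the paper: the paper's argument (given in the paragraph preceding the lemma) likewise introduces $\Omega'=\ree\setminus\pom$, invokes \cite[Proposition A.2]{HMM} for the full ADR property of $\partial(\Omega')_{\F,Q}^\vartheta$, and deduces the upper bound from the identity $\Omega_{\F,Q}^\vartheta=(\Omega')_{\F,Q}^\vartheta\cap\Omega$ and the resulting inclusion $\partial\Omega_{\F,Q}^\vartheta\subset\partial(\Omega')_{\F,Q}^\vartheta$. Your verification of the inclusion supplies the details the paper leaves as "one can easily see."
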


\begin{lemma}\label{lemma:VQ-overlap}
	Given $c,\tau\in (0,\frac12)$ there exist $\vartheta, C \gg 1$ (depending on $n$, ADR, $\tau$, and $c$) with the following significance: 
	for every $Q_0\in\D$, every pairwise disjoint collection $\F\subset\D_{Q_0}$, and every sequence $\{P_Q\}_{Q\in\D_{\F,Q_0}}$ so that $P_Q\in 2\,\widetilde{B}_Q\cap\Omega$ with $\delta(P_Q) \ge c\,\ell(Q)$ there hold
	\[
	\bigcup_{Q\in\D_{\F,Q_0}} B(P_Q, (1-\tau)\delta(P_Q))
	\subset
	\bigcup_{Q\in\D_{\F,Q_0}} \bigcup_{I\in \W_{Q}^{\vartheta}} I
	\subset
	\inter\Big(	\bigcup_{Q\in\D_{\F,Q_0}} U_Q^{\vartheta}\Big)
	=
	\Omega^{\vartheta}_{\F,Q_0}.	
	\] 
and
	\[
\sum_{Q\in\D_{\F,Q_0}} \mathbf{1}_{B(P_Q, (1-\tau)\delta(P_Q))}
\le 
C\, \mathbf{1}_{\Omega^{\vartheta}_{\F,Q_0}}.
\]
\end{lemma}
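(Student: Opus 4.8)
The plan is to exploit the definition of the Whitney regions $U_Q^\vartheta$ together with the standard Whitney-covering properties recorded before the statement. First I would fix $Q\in\D_{\F,Q_0}$ and let $P_Q\in 2\widetilde B_Q\cap\Omega$ with $\delta(P_Q)\ge c\,\ell(Q)$. Let $I\in\W$ be the (or a) Whitney cube containing $P_Q$. From $P_Q\in 2\widetilde B_Q$ and $\widetilde B_Q=B(x_Q,\Xi r_Q)$ with $r_Q\approx\ell(Q)$ one gets $\dist(P_Q,Q)\lesssim\ell(Q)$, hence $\dist(I,Q)\lesssim\ell(Q)$; and from $4\diam(I)\le\dist(I,\pom)\le 40\diam(I)$ together with $\delta(P_Q)\ge c\,\ell(Q)$ one gets $\ell(I)\approx\delta(P_Q)\gtrsim_c \ell(Q)$, while $\ell(I)\lesssim\delta(P_Q)\le \dist(P_Q,Q)+\diam(Q)\lesssim\ell(Q)$. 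Thus $I\in\W_Q^\vartheta$ provided $\vartheta$ is chosen large enough (depending only on $n$, ADR, and $c$) so that the two inequalities $2^{-\vartheta}\ell(Q)\le\ell(I)\le 2^\vartheta\ell(Q)$ and $\dist(I,Q)\le 2^\vartheta\ell(Q)$ from \eqref{eq:WQ} are satisfied. This establishes the first inclusion in the first display once I also note $B(P_Q,(1-\tau)\delta(P_Q))\subset 2I$: indeed the ball is centered at $P_Q\in I$ with radius $(1-\tau)\delta(P_Q)\le\delta(P_Q)\lesssim\ell(I)$, and after enlarging $\vartheta$ slightly one may replace the covering by $\bigcup_{I\in\W_Q^\vartheta}I$ absorbing the dilation factor (or equivalently observe that $I^*\supset$ a fixed neighborhood of $I$, and that the ball lies in a bounded dilate of $I$ which is still in $\W_Q^{\vartheta'}$); the middle inclusion into $\inter(\bigcup U_Q^\vartheta)=\Omega^\vartheta_{\F,Q_0}$ is then immediate from $I\subset I^*\subset U_Q^\vartheta$ together with the fact that, up to passing to a slightly larger dilation of the Whitney cubes, the union of the closed $I$'s is contained in the interior of the union of the fattened $I^*$'s.

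Second, for the bounded-overlap estimate, I would argue that each point $Y\in\Omega^\vartheta_{\F,Q_0}$ lies in $B(P_Q,(1-\tau)\delta(P_Q))$ for only boundedly many $Q$. If $Y\in B(P_Q,(1-\tau)\delta(P_Q))$ then $\delta(Y)\approx\delta(P_Q)\approx\ell(Q)$ (since $|Y-P_Q|\le(1-\tau)\delta(P_Q)$ forces $\tau\,\delta(P_Q)\le\delta(Y)\le(2-\tau)\delta(P_Q)$), so all such $Q$ have $\ell(Q)\approx\delta(Y)$, i.e.\ they live in a bounded range of dyadic generations. Moreover $|x_Q-Y|\le|x_Q-P_Q|+|P_Q-Y|\lesssim\ell(Q)\approx\delta(Y)$, so the centers $x_Q$ all lie in a fixed multiple of $B(Y,\delta(Y))$; since distinct cubes of comparable size have centers that are quantitatively separated (by the ADR/dyadic structure, $|x_Q-x_{Q'}|\gtrsim\min(\ell(Q),\ell(Q'))$ when $Q\not\subset Q'$ and $Q'\not\subset Q$), the geometric doubling property of $\pom$ bounds the number of such cubes by a constant $C=C(n,\mathrm{ADR},\tau,c)$. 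This yields $\sum_Q \mathbf 1_{B(P_Q,(1-\tau)\delta(P_Q))}\le C$ pointwise, and it is supported in $\Omega^\vartheta_{\F,Q_0}$ by the first part, giving the second display.

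The main obstacle is purely bookkeeping: making sure the single parameter $\vartheta$ can be chosen once and for all (depending only on $n$, ADR, $\tau$, $c$) so that \emph{both} conclusions hold simultaneously — in particular that the same $\vartheta$ makes $B(P_Q,(1-\tau)\delta(P_Q))\subset\bigcup_{I\in\W_Q^\vartheta}I$ (which needs $\vartheta$ large relative to the constants $\Xi$, $40\sqrt{n+1}$, and $1/c$) while the overlap constant $C$ stays finite (which is automatic once $\vartheta$ is fixed). I also need to be slightly careful at the bottom scale of $\D_{\F,Q_0}$ and near $\pom$, but since every $P_Q$ has $\delta(P_Q)\ge c\,\ell(Q)>0$ this causes no genuine difficulty; the role of $\tau>0$ is precisely to keep the balls $B(P_Q,(1-\tau)\delta(P_Q))$ away from $\pom$ by a definite proportion of their radius, which is what makes $\delta(Y)\approx\delta(P_Q)$ on them.
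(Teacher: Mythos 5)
Your overall strategy matches the paper's, but the first‐inclusion argument as written contains a genuine slip. You take $I\in\W$ with $P_Q\in I$, verify $I\in\W_Q^\vartheta$, and then assert $B(P_Q,(1-\tau)\delta(P_Q))\subset 2I$. This is false: Whitney cubes satisfy $4\diam(I)\le\dist(I,\pom)\le\delta(P_Q)$, so $\diam(I)\le\delta(P_Q)/4$, while the ball has radius $(1-\tau)\delta(P_Q)$ which is close to $\delta(P_Q)$ for small $\tau$; the ball therefore extends well outside $2I$ and in general meets many Whitney cubes. The parenthetical attempt at a fix (``the ball lies in a bounded dilate of $I$ which is still in $\W_Q^{\vartheta'}$'') does not parse, since $\W_Q^{\vartheta'}$ is a collection of Whitney cubes, not of dilates, and $I^*=(1+\lambda)I$ is only a small fattening, not a large dilate. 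The clean version of what you are gesturing at, and what the paper actually does, is to take \emph{any} Whitney cube $J$ intersecting $V_Q=B(P_Q,(1-\tau)\delta(P_Q))$: picking $Z\in J\cap V_Q$, one has $\ell(J)\approx\delta(Z)\approx_\tau\delta(P_Q)\approx_c\ell(Q)$ (here $\tau>0$ is crucial to keep $\delta(Z)$ comparable to $\delta(P_Q)$) and $\dist(J,Q)\le|Z-P_Q|+|P_Q-x_Q|\lesssim\ell(Q)$, so $J\in\W_Q^\vartheta$ once $\vartheta$ is large enough depending on $n$, ADR, $\tau$, $c$; then $V_Q\subset\bigcup_{J\cap V_Q\neq\emptyset}J\subset\bigcup_{J\in\W_Q^\vartheta}J$. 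Restricting to the single cube $I\ni P_Q$ loses exactly this. Your bounded-overlap argument is in substance the same as the paper's (the paper phrases it via $Q$ and $Q'$ being $2^N$-close for a fixed $N$, which is an equivalent way of bounding the count), and is fine.
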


\begin{proof}
For each $Q\in\D_{\F,Q_0}$ write $V_Q:= B(P_Q, (1-\tau)\delta(P_Q))$. Let $I\in\W$ be such that $I\cap V_Q\neq\emptyset$. Taking $Z_Q\in I\cap V_Q$ we note that 
\[
\ell(I)
\approx
\delta (Z_Q)
\approx_\tau
\delta (P_Q)
\approx_c 
\ell(Q),
\]
where the implicit constants depend on $\tau$ and the parameter $c$, and the other allowable parameters. Besides,
\[
\dist(I,Q)
\le
|Z_Q-P_Q|+|P_Q-x_Q|
\le
(1-\tau)\delta(P_Q)+2\Xi r_Q
\lesssim
\ell(Q)
\]
where the implicit constant depends on the allowable parameters. This means that we can find $\vartheta\gg 1$ (depending on $n$, ADR, $\tau$, and $c$) such that $I\in \W_{Q}^{\vartheta}$ (cf. \eqref{eq:WQ}). As a consequence, 
\[
\bigcup_{Q\in\D_{\F,Q_0}} V_Q
\subset 
\bigcup_{Q\in\D_{\F,Q_0}} \bigcup_{I\in\W : I\cap V_Q\neq\emptyset} I
\subset
\bigcup_{Q\in\D_{\F,Q_0}} \bigcup_{I\in \W_{Q}^{\vartheta}} I
\subset
\inter\Big(	\bigcup_{Q\in\D_{\F,Q_0}} U_Q^{\vartheta}\Big)
=
\Omega^{\vartheta}_{\F,Q_0}.	
\] 
To complete the proof let $X\in\bigcup_{Q\in\D_{\F,Q_0}} V_Q$ and pick  $Q\in \D_{\F,Q_0}$ so that $X\in V_Q$ with $Q\in \D_{\F,Q_0}$. Note that if 
$X\in V_Q'$ for $Q'\in \D_{\F,Q_0}$, then $\ell(Q)\approx_\tau \delta(X)\approx_\tau \ell(Q')$ and
\[
\dist(Q,Q')
\le
|x_Q-P_Q|+|P_Q-X|+|X-P_{Q'}|+|P_{Q'}-x_{Q'}|
\lesssim_\tau
\ell(Q)+\ell(Q'),
\]
thus $Q$ and $Q'$ are $2^N$-close with a uniform constant $N$ depending just on $n$, ADR, $\tau$, and $c$. As a result, 
\[
\sum_{Q'\in\D_{\F,Q_0}} \mathbf{1}_{V_{Q'}}(X)
=
\#\{Q'\in\D_{\F,Q_0}: V_{Q'}\ni X\}
\le
\#\{Q'\in\D: \text{$Q'$ and $Q_0$ are $2^N$-close}\}
\lesssim_N 1.
\]
This completes the proof. 
\end{proof}

We need the following auxiliary which adapts \cite[Lemma~4.44]{HMT1} and \cite[Lemma~6.4]{CDMT} to our current setting. The proof is the same as that of \cite[Lemma~6.4]{CDMT} and details are left to the interested reader. 

\begin{lemma}\label{lem:approx}
Let $\Omega \subset \ree$, $n\ge 2$, be an open set with Ahlfors-David regular boundary satisfying the corkscrew condition. Given $Q_0 \in \D$, a pairwise disjoint collection $\F \subset \D_{Q_0}$, and $M\ge 4$ let $\F_M$ be the family of maximal cubes of the collection ${\F}$ augmented by adding all the cubes $Q \in \D_{Q_0}$ such that $\ell(Q) \leq 2^{-M} \ell(Q_0)$. There exist $\Psi_M \in \mathscr{C}_c^{\infty}(\R^{n+1})$ and a constant $C\ge 1$ depending only on dimension $n$, the corkscrew and ADR constants, but independent of $M$, $\F$, and $Q_0$ such that the following hold:
	
\begin{list}{$(\theenumi)$}{\usecounter{enumi}\leftmargin=.8cm
			\labelwidth=.8cm\itemsep=0.2cm\topsep=.1cm
			\renewcommand{\theenumi}{\roman{enumi}}}
		
\item $C^{-1}\,\mathbf{1}_{\Omega^\vartheta_{\F_M, Q_0}} \le \Psi_M \le \mathbf{1}_{\Omega_{\F_M, Q_0}^{\vartheta, *}}$.
		
\item $\sup_{X\in \Omega} |\nabla \Psi_M(X)|\, \delta(X) \le C$.
		
\item Setting
\begin{equation*}
\W_M^\vartheta:=\bigcup_{Q \in\D_{\F_{M},Q_0}} \W_Q^{\vartheta}, \qquad
\W_M^{\vartheta,\Sigma}:= \big\{I\in \W_M^\vartheta:\, \exists\,J\in \W \setminus \W_M^\vartheta\ \text{with}\ \partial I \cap \partial J \neq \emptyset	\big\},
\end{equation*}
\end{list}
one has
\begin{equation*}
\nabla \Psi_M \equiv 0 \quad \mbox{in}	\quad 
\bigcup_{I\in \W_M^\vartheta \setminus \W_M^{\vartheta, \Sigma} }I^{**},
\end{equation*}
and there exists a family $\{\widehat{Q}_I\}_{I \in \W_M^{\vartheta,\Sigma}}$ so that
\begin{equation*}
C^{-1}\,\ell(I)\le \ell(\widehat{Q}_I)\le C\,\ell(I), \qquad
\dist(I, \widehat{Q}_I)\le C\,\ell(I), \qquad
\sum_{I \in \W_M^{\vartheta,\Sigma}} \mathbf{1}_{\widehat{Q}_I} \le C.
\end{equation*}
\end{lemma}

\subsection{PDE estimates} 
Now we recall several facts concerning the elliptic measures and the Green functions. For our first results 
we will only assume that $\Omega \subset \R^{n+1}$, $n \geq 2$, is an open set, not necessarily connected, 
with $\partial \Omega$ being ADR. Later we will focus on the case where $\Omega$ is a 
$1$-sided CAD. 

Let $Lu = - \div(A \nabla u)$ be a variable coefficient second order divergence form operator with 
$A(X)=(a_{i, j}(X))_{i,j=1}^{n+1}$ being a real (not necessarily symmetric) matrix with 
$a_{i, j} \in L^{\infty}(\Omega)$ for $1 \leq i, j \leq n+1$, and $A$ uniformly elliptic, that is, there 
exists $\Lambda \geq 1$ such that 
\begin{align*}
\Lambda^{-1} |\xi|^2 \leq A(X) \xi \cdot \xi \quad\text{ and }\quad |A(X) \xi \cdot \eta| \leq \Lambda |\xi| |\eta|, 
\end{align*}
for all $\xi, \eta \in \R^{n+1}$ and for almost every $X \in \Omega$. 

In what follows we will only be working with this kind of operators, we will refer to them as ``elliptic operators'' 
for the sake of simplicity. We write $L^{\top}$ to denote the transpose of $L$, or, in other words, 
$L^{\top}u = -\div(A^{\top}\nabla u)$ with $A^{\top}$ being the transpose matrix of $A$. 

We say that a function $u \in W_{\loc}^{1,2}(\Omega)$ is a weak solution of $Lu=0$ in $\Omega$, or that
$Lu=0$ in the weak sense, if 
\begin{align*}
\iint_{\Omega} A(X) \nabla u(X) \cdot \nabla \Phi(X)=0, \quad \forall\,\Phi \in \mathscr{C}_c^{\infty}(\Omega). 
\end{align*}
Here and elsewhere $\mathscr{C}_c^{\infty}(\Omega)$ stands for the set of compactly supported smooth functions with all derivatives of all orders being continuous.

Associated with the operators $L$ and $L^{\top}$,  one can respectively construct the elliptic measures 
$\{\omega_L^X\}_{X \in \Omega}$ and $\{\omega_{L^\top}^X\}_{X \in \Omega}$, and the Green functions 
$G_L$ and $G_{L^{\top}}$ (see \cite{HMT2} for full details). We next present some definitions and properties 
that will be used throughout this paper.

The following lemmas can be found in \cite{HMT2}.

\begin{lemma}
Suppose that $\Omega \subset  \R^{n+1}$, $n\ge 2$, is an open set such that $\partial \Omega$ is 
ADR. Given an elliptic operator $L$, there exist $C>1$ (depending only on dimension 
and on the ellipticity of $L$) and $c_{\theta}>0$ (depending on the above parameters and on 
$\theta \in (0, 1)$) such that $G_L$, the Green function associated with $L$, satisfies 
\begin{align}
G_L(X, Y) &\leq C|X-Y|^{1-n}; \\[.2cm] 
c_{\theta} |X-Y|^{1-n} \leq G_L(X, Y), \quad &\text{if } |X-Y| \leq \theta \delta(X), \theta \in(0,1); \\[.2cm] 
G_L(\cdot, Y) \in C(\overline{\Omega} \setminus \{Y\}) &\text{ and } 
G_L(\cdot, Y)|_{\partial \Omega} \equiv 0, \forall\,Y \in \Omega;  \\[.2cm] 
G_L(X, Y) \geq 0, &\quad \forall\,X, Y \in \Omega, X \neq Y;  \\[.2cm] 
G_L(X, Y)=G_{L^{\top}}(Y, X), &\quad \forall\,X, Y \in \Omega, X \neq Y. 
\end{align}
Moreover, $G_L(\cdot, Y) \in W^{1,2}_{\loc}(\Omega \setminus \{Y\})$ for every $Y \in \Omega$, 
and satisfies $LG_L(\cdot, Y)=\delta_Y$ in the weak sense in $\Omega$, that is,  
\begin{equation}
\iint_{\Omega} A(X) \nabla_{X} G_L(X, Y) \cdot \nabla \Phi(X) dX = \Phi(Y), \quad \forall\,\Phi \in \mathscr{C}_c^{\infty}(\Omega). 
\end{equation} 
Finally, the following Riesz formula holds 
\begin{equation}\label{eq:Riesz}
\iint_{\Omega} A^{\top}(Y) \nabla_{Y} G_{L^{\top}}(Y, X) \cdot \nabla \Phi(Y) dY 
= \Phi(X) - \int_{\partial \Omega} \Phi d\omega_L^X, 
\end{equation} 
for a.e.~$X \in \Omega$ and for every $\Phi \in \mathscr{C}_c^{\infty} (\R^{n+1})$.  
\end{lemma}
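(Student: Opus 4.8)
The plan is to realize $G_L$ as a limit of Green functions on an exhausting sequence of well-behaved subdomains and to push the estimates through the limit. First I would fix an increasing sequence of bounded Lipschitz domains $\Omega_j\uparrow\Omega$ (intersecting with large balls if $\Omega$ is unbounded), so that on each $\Omega_j$ the classical theory of Littman--Stampacchia--Weinberger and Gr\"uter--Widman produces a Green function $G_L^j(\cdot,Y)\in W^{1,2}_{\loc}(\Omega_j\setminus\{Y\})$, nonnegative, with zero boundary values, solving $LG_L^j(\cdot,Y)=\delta_Y$ weakly in $\Omega_j$, and symmetric in the sense $G_L^j(X,Y)=G_{L^\top}^j(Y,X)$. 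The crucial uniform ingredients are the two pointwise bounds. The upper bound $G_L^j(X,Y)\le C|X-Y|^{1-n}$ (valid since $n+1\ge 3$) follows from Stampacchia's duality/truncation argument: test the equation for $G_L^j(\cdot,Y)$ with suitable truncations, use the Caccioppoli inequality and the Sobolev embedding, together with the interior De Giorgi--Nash--Moser $L^\infty$ bound for subsolutions on the shell $\{|X-Y|\approx r\}$; the constant depends only on $n$ and ellipticity, not on $j$. The lower bound $c_\theta|X-Y|^{1-n}\le G_L^j(X,Y)$ when $|X-Y|\le\theta\delta(X)$ follows from the maximum principle: both points then lie in a fixed interior ball $B\subset\Omega_j$ centered near $Y$, on which $G_L^j(\cdot,Y)$ dominates the Green function of $L$ on $B$, and the Gr\"uter--Widman lower bound on a ball (or Harnack for the positive solution $G_L^j(\cdot,Y)$ plus a normalization of its singularity) gives the claim.

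Next I would pass to the limit. Away from $Y$ the $G_L^j(\cdot,Y)$ are nonnegative solutions of $Lu=0$; the uniform upper bound plus the interior Harnack inequality and De Giorgi--Nash--Moser oscillation estimates make the family locally uniformly bounded and equi-H\"older on $\Omega\setminus\{Y\}$, while Caccioppoli gives uniform $W^{1,2}_{\loc}(\Omega\setminus\{Y\})$ bounds. So along a subsequence (a diagonal argument over the exhaustion) $G_L^j(\cdot,Y)\to G_L(\cdot,Y)$ locally uniformly and weakly in $W^{1,2}_{\loc}(\Omega\setminus\{Y\})$, and the limit inherits the pointwise bounds, nonnegativity, the symmetry $G_L(X,Y)=G_{L^\top}(Y,X)$, and $LG_L(\cdot,Y)=0$ away from $Y$; testing with $\Phi\in\mathscr{C}_c^\infty(\Omega)$ and splitting the integral near $Y$ (controlled by the upper bound) and away from it yields $LG_L(\cdot,Y)=\delta_Y$ weakly in $\Omega$. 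For continuity up to $\overline\Omega$ with vanishing boundary values I would use that an ADR set satisfies the capacity density condition, so every boundary point is quantitatively regular; combined with the boundary Caccioppoli inequality and a De Giorgi--type iteration this yields a uniform H\"older modulus of continuity of $G_L^j(\cdot,Y)$ near the boundary, which survives the limit and gives $G_L(\cdot,Y)\in C(\overline\Omega\setminus\{Y\})$ with $G_L(\cdot,Y)|_{\partial\Omega}\equiv 0$.

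Finally, for the Riesz formula I would first define $\omega_L^X$: for $\Phi\in\mathscr{C}_c^\infty(\ree)$ let $u_\Phi$ be the variational (Lax--Milgram) solution of $Lu_\Phi=0$ in $\Omega$ with $u_\Phi-\Phi$ in the appropriate zero-trace space, so that $X\mapsto u_\Phi(X)$ is linear and positive in $\Phi$ and hence, by Riesz representation, equals $\int_{\partial\Omega}\Phi\,d\omega_L^X$. Then $v:=\Phi-u_\Phi$ is an admissible test function for $L^\top G_{L^\top}(\cdot,X)=\delta_X$ (after the usual cutoff/density approximation near $X$ and near $\partial\Omega$), giving $\iint_\Omega A^\top(Y)\nabla_Y G_{L^\top}(Y,X)\cdot\nabla v(Y)\,dY=v(X)=\Phi(X)-\int_{\partial\Omega}\Phi\,d\omega_L^X$; since $u_\Phi$ is itself a weak solution, $\iint_\Omega A^\top\nabla_Y G_{L^\top}(Y,X)\cdot\nabla u_\Phi(Y)\,dY=0$, and combining the two gives the stated identity. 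The hard part, and the step I would be most careful about, is the boundary analysis: proving the up-to-the-boundary continuity and zero boundary values uniformly in $j$ (exactly where ADR, and not mere openness, enters), and making rigorous the integration by parts in the Riesz formula with test functions that do not vanish on $\partial\Omega$ --- that is, correctly identifying the zero-trace space in which $v=\Phi-u_\Phi$ lives and approximating it by functions compactly supported in $\Omega\setminus\{X\}$. All of this is carried out in detail in \cite{HMT2}.
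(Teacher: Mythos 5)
The paper does not prove this lemma at all: it is stated as a black box and attributed to the book in preparation \cite{HMT2}, so there is no in-paper argument to compare yours against. That said, your outline is the standard and correct route to these facts, and it is in line with the construction in \cite{HMT2} and the earlier Grüter--Widman/Hofmann--Kim frameworks: exhaust by Lipschitz domains, get uniform pointwise bounds (upper bound by Stampacchia truncation plus De Giorgi--Nash--Moser in ambient dimension $n+1\ge 3$; interior lower bound by comparison with the Green function of a ball), pass to a limit via Harnack and Caccioppoli, use that $n$-ADR boundaries in $\R^{n+1}$ satisfy the capacity density condition to get vanishing continuous boundary values, and obtain the Riesz formula by testing $L^\top G_{L^\top}(\cdot,X)=\delta_X$ against $\Phi-u_\Phi$ where $u_\Phi$ is the Lax--Milgram/Perron solution defining $\omega_L^X$. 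The two points you correctly single out as delicate are exactly the ones that require care: (i) the CDC-to-H\"older-at-the-boundary step must be done uniformly in the exhaustion index, and one must check that it is the complement of $\Omega$ (which contains the ADR set $\partial\Omega$) that has uniformly positive capacity density; and (ii) in the Riesz formula the singularity of $G_{L^\top}(\cdot,X)$ at $X$ is only $W^{1,p}$ for $p<\frac{n+1}{n}$, not $W^{1,2}$, so the cutoff/approximation must handle both the singularity at $X$ and the fact that $\Phi$ does not vanish on $\partial\Omega$, which is precisely why the statement is ``for a.e.\ $X$'' rather than for all $X$. With those caveats made precise, your sketch is a faithful summary of the intended proof.
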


\begin{lemma}\label{lem:Bourgain-CFMS}
Suppose that $\Omega \subset  \R^{n+1}$, $n\ge 2$, is an open set with ADR boundary. Let $L$ be an elliptic operator. There exists a constant $C>1$ (depending only on the dimension, the ADR constant and the ellipticity of $L$) such that  we 
have the following properties: 

\begin{list}{\textup{(\theenumi)}}{\usecounter{enumi}\leftmargin=1cm \labelwidth=1cm \itemsep=0.2cm \topsep=.2cm \renewcommand{\theenumi}{\alph{enumi}}}

\item  For every $x \in \partial \Omega$ and $0<r<\diam(\partial \Omega)$ there holds
\begin{align}\label{Bourgain}
\omega_L^Y(\Delta(x,r)) \geq 1/C, \quad \forall\,Y \in \Omega \cap B(x, C^{-1}r). 
\end{align}

\item  Given $X,Y\in\Omega$ such that $|X-Y|\ge \delta(Y)/2$, then
\begin{align}\label{eq:CFMS:1} 
\frac{G_L(X,Y)}{\delta(Y)}
\le 
C\,
\frac{\omega_L^X(\Delta(\widehat{y},2\delta(Y)))}{\sigma(\Delta(\widehat{y},2\,\delta(Y)))}
\end{align}
with $\widehat{y}\in\Omega$ such that $|Y-\widehat{y}|=\delta(Y)$. 

\item If $0 \le u \in W^{1,2}_{\loc}(B_0 \cap \Omega) \cap \mathscr{C}(\overline{B_0 \cap \Omega})$ satisfies $Lu=0$ in the weak-sense in $B_0 \cap \Omega$ and $u\equiv 0$ in $\Delta_0$ then
\[
u(X) \le C\, \bigg(\frac{|X-x_0|}{r_0}\bigg)^{\gamma} \sup_{Y \in \overline{B_0 \cap \Omega}} u(Y), \qquad\forall X \in \frac12 B_0 \cap \Omega.
\]
		\end{list}
\end{lemma}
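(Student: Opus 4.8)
The plan is to prove the three statements in the order $(c)\Rightarrow(a)\Rightarrow(b)$. Here $(c)$ and $(a)$ are classical consequences of De Giorgi--Nash--Moser theory and the maximum principle, while $(b)$ is the Caffarelli--Fabes--Mortola--Salsa comparison and is the delicate point, since $\Omega$ is not assumed connected, so no Harnack chain is available and one has to work within the corkscrew/Caccioppoli framework; all of these arguments are carried out in detail in \cite{HMT2}.

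For $(c)$, the starting observation is that an $n$-dimensional ADR set $\partial\Omega$ automatically satisfies the capacity density condition: since $\H^n(\Delta(x_0,r))\gtrsim r^n$, a Frostman-type estimate bounds the variational $2$-capacity of $\overline{B(x_0,r)}\setminus\Omega$ relative to $B(x_0,2r)$ from below by a fixed multiple of $r^{n-1}$, uniformly in $x_0\in\partial\Omega$ and $0<r<\diam(\partial\Omega)$; in particular every boundary point is Wiener-regular. Then, given a nonnegative weak solution $u$ of $Lu=0$ in $B_0\cap\Omega$ with $u\equiv0$ on $\Delta_0=\Delta(x_0,r_0)$, its extension by zero to $B_0\setminus\Omega$ is a nonnegative subsolution in $B_0$ (for the operator whose matrix is extended by the identity outside $\Omega$) that, at every scale $r\le r_0$, vanishes on a subset of $\overline{B(x_0,r)}$ of $2$-capacity $\gtrsim r^{n-1}$. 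Combining Caccioppoli's inequality with the capacitary Sobolev--Poincar\'e inequality then gives a scale-invariant energy gain, $\iint_{B(x_0,r)\cap\Omega}|\nabla u|^2\,dX\le\gamma_0\iint_{B(x_0,2r)\cap\Omega}|\nabla u|^2\,dX$ with $\gamma_0<1$ independent of $r$; iterating this, using the further Caccioppoli bound $\iint_{\frac12 B_0\cap\Omega}|\nabla u|^2\,dX\lesssim r_0^{n-1}\big(\sup_{\overline{B_0\cap\Omega}}u\big)^2$, and quoting Morrey's growth lemma, one obtains the pointwise H\"older bound with some exponent $\gamma>0$.

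Granting $(c)$, statement $(a)$ is short. The case $\partial\Omega\setminus\Delta(x,r)=\emptyset$ is trivial, so assume otherwise and set $v(Y):=\omega_L^Y(\partial\Omega\setminus\Delta(x,r))=1-\omega_L^Y(\Delta(x,r))$, a bounded nonnegative solution of $Lv=0$ in $\Omega$. By monotonicity of elliptic measure, $v(Y)\le\omega_L^Y(\partial\Omega\setminus\Delta(x,r/2))=:\widetilde v(Y)$, and $\widetilde v$ is a nonnegative solution of $L\widetilde v=0$ in $B(x,r/4)\cap\Omega$ which, since every boundary point is regular, vanishes continuously on $\overline{\Delta(x,r/4)}\subset\Delta(x,r/2)$. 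Applying $(c)$ on $B(x,r/4)$ gives $v(Y)\le\widetilde v(Y)\le C(|Y-x|/r)^{\gamma}$ for $Y\in B(x,r/8)\cap\Omega$; choosing $|Y-x|\le C^{-1}r$ with $C$ large enough, depending only on the allowable parameters, forces $v(Y)\le\tfrac12$, that is $\omega_L^Y(\Delta(x,r))\ge\tfrac12$, which is \eqref{Bourgain}.

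Finally, for $(b)$, write $r:=\delta(Y)$, choose $\widehat y\in\partial\Omega$ with $|Y-\widehat y|=r$, and set $\Delta:=\Delta(\widehat y,2r)$; since $\sigma(\Delta)\approx r^n$ it suffices to prove $G_L(X,Y)\lesssim r^{1-n}\,\omega_L^X(\Delta)$. If $X$ lies in a connected component of $\Omega$ other than that of $Y$, then $G_L(X,\cdot)=G_{L^\top}(\cdot,X)$ solves $L^\top(\cdot)=0$ with vanishing boundary data on the component of $Y$, hence is identically zero there, and the inequality is trivial; and if $X$ lies very close to $\widehat y$, then $\omega_L^X(\Delta)\gtrsim1$ by $(a)$ while $G_L(X,Y)\le C|X-Y|^{1-n}\lesssim r^{1-n}$ by the previous lemma, so the inequality holds. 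In the remaining cases one runs the Caffarelli--Fabes--Mortola--Salsa argument: $Y$ is a corkscrew point for $\Delta$ (indeed $B(Y,r/2)\subset B(\widehat y,2r)\cap\Omega$); one tests the Riesz identity \eqref{eq:Riesz} against a smooth cutoff adapted to $B(\widehat y,2r)$ and combines Cauchy--Schwarz, Caccioppoli's inequality for $G_{L^\top}(\cdot,X)=G_L(X,\cdot)$ on an annulus centered at $\widehat y$ at scale $\approx r$ (where it solves $L^\top(\cdot)=0$, vanishes on $\partial\Omega$, and obeys $G_L(X,\cdot)\le C|X-\cdot|^{1-n}$), the interior Harnack inequality inside $B(Y,r)\subset\Omega$ to transfer the estimate back to the pole $Y$, and part $(a)$ to absorb dilation constants. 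I expect the bookkeeping of the various positions of $X$ relative to $\widehat y$ ---that is, the CFMS comparison in the absence of connectivity--- to be the main obstacle; the complete argument is in \cite{HMT2}.
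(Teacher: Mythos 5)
The paper itself does not prove this lemma; it simply refers to the book \cite{HMT2} for both this and the preceding Green-function lemma, so there is no internal argument to compare against. Taking your sketch on its own merits: parts (c) and (a) are substantially the right route, with one caveat in (a). Since $\Omega$ is only assumed to have ADR boundary and may be unbounded, the total mass $\omega_L^Y(\partial\Omega)$ can be strictly less than $1$, so the identity $\omega_L^Y(\partial\Omega\setminus\Delta(x,r)) = 1 - \omega_L^Y(\Delta(x,r))$ is not valid in general. The remedy is to work directly with $v(Y) := 1 - \omega_L^Y(\Delta(x,r))$, a nonnegative $L$-solution; to see that it vanishes continuously on $\Delta(x,r/2)$ split $v=(1-\omega_L^Y(\partial\Omega))+\omega_L^Y(\partial\Omega\setminus\Delta(x,r))$ and apply Wiener regularity (which follows from CDC, hence from ADR) to each piece. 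After that correction your argument for (a) goes through.

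For (b) the sketch is pointing at the wrong mechanism. The package you describe — testing the Riesz identity against a cutoff adapted to $B(\widehat{y},2r)$, Cauchy--Schwarz and Caccioppoli on the annulus, then Harnack to ``transfer the estimate back to the pole $Y$'' — is the classical route to the \emph{reverse} inequality $\omega_L^X(\Delta)\lesssim r^{n-1}G_L(X,Y)$, and the Harnack step there requires a chain from the annulus to $Y$, which is exactly what the absence of the Harnack chain condition forbids. The inequality actually asserted in (b), namely $G_L(X,Y)\lesssim \delta(Y)^{1-n}\omega_L^X(\Delta(\widehat{y},2\delta(Y)))$, should instead be obtained by a maximum-principle comparison of the two sides as nonnegative $L$-solutions of the variable $X$ in $\Omega\setminus\overline{B(Y,r/2)}$ with $r=\delta(Y)$. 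On $\partial\Omega$ the Green function vanishes; on $\partial B(Y,r/2)\cap\Omega$ one has $G_L(X,Y)\lesssim r^{1-n}$ from the size bound on the Green function, and the crux is that $\omega_L^X(\Delta(\widehat{y},2r))\gtrsim 1$ there. To get the latter, apply part (a) at a point $Z$ on the segment $[\widehat{y},Y]$ with $|Z-\widehat{y}|$ a small fixed multiple of $r$ (so that $Z$ lies in $B(\widehat{y},2r/C)\cap\Omega$ with $C$ the constant from (a)), and then chain Harnack inequalities from $Z$ to $X$ along a path lying entirely inside the ball $B(Y,\delta(Y))\subset\Omega$; since that ball is itself connected, no global connectivity of $\Omega$ is needed and the number of balls in the chain is controlled. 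Together with the decay at infinity of $G_L(\cdot,Y)$ when $\Omega$ is unbounded, the maximum principle then yields the claim. This local, purely interior replacement for the usual Harnack chain is precisely what makes the CFMS upper bound survive in the disconnected setting, and it is the idea your outline does not identify.
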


\begin{lemma}[{\cite[Lemma~3.3]{AGMT}}]\label{lemma:AGMT}
There exists a small enough parameter $\tau_0\in (0,\frac12)$ (depending on $n$, ADR and ellipticity) such that given $0<\tau<\tau_0$ there exists $\epsilon_\tau>0$ small enough (depending on the same parameters and additionally on $\tau$), so that for each $Q\in\D$, $\epsilon\in (0,\epsilon_\tau]$, $Y_Q\in\frac12 B_Q\cap\Omega$ (cf.~\eqref{eq:BQ}) with $\delta(Y_Q)\approx \epsilon \ell(Q)$, and for each Borel set $E_Q\subset Q$ satisfying 
\[
\omega_L^{Y_Q}(E_Q) \ge (1-\epsilon)\, \omega_L^{Y_Q}(Q),
\]
there exists a  Borel function  $0\le f_Q\lesssim  \mathbf{1}_{E_Q}$ so that 
\[
u_Q(X):=\int_{E_Q} f_Q\,d\omega_L^X, \qquad X\in\Omega,
\]
satisfies $u_Q\in W^{1,2}_{\loc}(\Omega)$, $Lu_Q=0$ in the weak sense in $\Omega$, and
\[
\iint_{B(Y_Q, (1-\tau)\delta(Y_Q))} |\nabla u_Q(X)|^2\,\delta(X)\gtrsim_{\tau,\epsilon} \sigma(Q),
\]
where the implicit constants depend on $n$, ADR, ellipticity, $\tau$, and $\epsilon$.

\end{lemma}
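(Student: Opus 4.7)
The lemma is taken from \cite[Lemma~3.3]{AGMT}; I sketch the key structural steps one would carry out to prove it. As a preliminary normalization, note that since $Y_Q\in\tfrac{1}{2}B_Q$ and $\Delta_Q\subset Q$, Bourgain's estimate \eqref{Bourgain} gives $\omega_L^{Y_Q}(Q)\ge \omega_L^{Y_Q}(\Delta_Q)\gtrsim 1$. The hypothesis then yields $\omega_L^{Y_Q}(E_Q)\ge (1-\epsilon)\omega_L^{Y_Q}(Q)\gtrsim 1$ for $\epsilon$ sufficiently small (depending on the Bourgain constant). In particular, for any $f$ of the form $\alpha\mathbf{1}_{E_Q}$ with $\alpha\in(0,1]$, the elliptic potential $u_f(X):=\int f\,d\omega_L^X$ satisfies $u_f(Y_Q)\gtrsim \alpha$ while $\|u_f\|_{L^\infty}\le \alpha$.

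The naive choice $f_Q=\mathbf{1}_{E_Q}$ is insufficient because Harnack's inequality in $B:=B(Y_Q,(1-\tau)\delta(Y_Q))\Subset \Omega$ only controls the oscillation of $u_Q$ from above. To extract the desired Dirichlet energy lower bound, the plan is a test-function duality using the Green function $G_{L^\top}(\cdot, Y_Q)$. Fix a smooth cutoff $\chi\in\mathscr{C}_c^\infty(\R^{n+1})$ with $\chi(Y_Q)=1$, $\supp(\chi)\Subset \Omega$, and $\|\nabla\chi\|_{L^\infty}\lesssim \delta(Y_Q)^{-1}$. Apply the Riesz identity \eqref{eq:Riesz} with $\Phi=\chi u_f$ at $X=Y_Q$ (the boundary integral vanishes because $\chi u_f$ is compactly supported in $\Omega$) to get
\[
u_f(Y_Q) = \iint_\Omega A^\top(Y)\nabla_Y G_{L^\top}(Y,Y_Q)\cdot \nabla(\chi u_f)(Y)\,dY = I[f] + II[f],
\]
where $I[f]$ collects the $\chi\nabla u_f$ piece and $II[f]$ the $u_f\nabla\chi$ piece (supported on the annulus where $\nabla\chi\ne 0$, a region avoiding the pole $Y_Q$). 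A Cauchy--Schwarz estimate weighted by $\delta$ applied to $I[f]$, combined with the standard pointwise bound $|\nabla G_{L^\top}(\cdot,Y_Q)|\lesssim |Y-Y_Q|^{-n}$, should produce an inequality of the form $\bigl(u_f(Y_Q)-|II[f]|\bigr)^2\lesssim \delta(Y_Q)^{-n}\iint_B|\nabla u_f|^2\delta\,dY$. Picking $f_Q=\alpha\mathbf{1}_{E_Q}$ with $\alpha\in(0,1]$ calibrated so that $u_{f_Q}(Y_Q)\ge 2|II[f_Q]|$ (possible by the normalization in the first paragraph), one concludes
\[
\iint_B |\nabla u_Q|^2\delta\,dY\gtrsim\delta(Y_Q)^n\approx (\epsilon\ell(Q))^n\approx \epsilon^n\sigma(Q)\gtrsim_\epsilon \sigma(Q),
\]
with implicit constants depending on $n$, ADR, ellipticity, $\tau$, and $\epsilon$.

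The principal technical obstacle is handling the singularity of $G_{L^\top}(\cdot, Y_Q)$ at $Y_Q$ in the weighted integral $\iint_B |\nabla G_{L^\top}|^2\chi^2\delta^{-1}\,dY$: a naive use of the pointwise bound makes this integral divergent for $n\ge 2$. In \cite{AGMT} this is circumvented by a refined choice of cutoff (e.g., arranging $\chi$ to vanish in a small ball around $Y_Q$) together with a separate, direct treatment of the near-pole contribution using fundamental-solution asymptotics. A secondary delicate point is the balancing of universal constants needed to ensure that $|II[f_Q]|$ does not swamp $u_{f_Q}(Y_Q)$; this relies crucially on the Bourgain-type lower bound established in the first step, and is the reason one needs $\epsilon$ to be small relative to $\tau$. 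Finally, justifying \eqref{eq:Riesz} for the test function $\chi u_f\in W^{1,2}_c(\Omega)$ (rather than $\mathscr{C}_c^\infty$) is straightforward by mollification.
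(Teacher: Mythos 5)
The paper does not prove this lemma: it is quoted directly from \cite[Lemma~3.3]{AGMT}, so the only issue is whether your sketch could be turned into a proof, and as it stands it cannot. The central mechanism is defeated by homogeneity: with $f_Q=\alpha\mathbf{1}_{E_Q}$, both $u_{f}(Y_Q)$ and the annulus term $II[f]$ are \emph{linear} in $f$, so replacing $f$ by $\alpha f$ rescales them identically and no ``calibration'' of $\alpha$ can produce $u_f(Y_Q)\ge 2|II[f]|$ unless it already holds for $\alpha=1$; whether it holds for $\alpha=1$ is exactly the oscillation statement that has to be proved, and for data $\mathbf{1}_{E_Q}$ it is false in general. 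For instance, if the connected component of $\Omega$ containing $Y_Q$ has its boundary contained in $E_Q$ --- a configuration the hypotheses do not exclude --- then $u_{\mathbf{1}_{E_Q}}\equiv 1$ near $Y_Q$ and the left-hand side of the desired estimate vanishes; this is precisely why the lemma only asserts the existence of \emph{some} $0\le f_Q\lesssim\mathbf{1}_{E_Q}$, typically obtained by deleting (or keeping, according to a dichotomy in $\omega_L^{Y_Q}$-mass) a small surface ball $\Delta(\widehat y, c\,\tau\,\delta(Y_Q))$ around the touching point $\widehat y$. Any correct argument must use quantitatively what your sketch never uses: the boundary H\"older estimate (Lemma~\ref{lem:Bourgain-CFMS}(c)) together with $\delta(Y_Q)\approx\epsilon\,\ell(Q)$ to get $\omega_L^{Y_Q}(\pom\setminus Q)\lesssim\epsilon^{\gamma}$, hence $u_Q(Y_Q)$ close to its supremum; Bourgain's estimate \eqref{Bourgain} at scale $\tau\,\delta(Y_Q)$ near $\widehat y$ to force a definite drop of $u_Q$ at a point of $B(Y_Q,(1-\tau)\delta(Y_Q))$ (whence the constraints $\tau<\tau_0$, $\epsilon\le\epsilon_\tau$); and then a conversion of this pointwise oscillation into a lower bound for $\iint|\nabla u_Q|^2\,\delta\,dX$. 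A lower bound for $u$ at the single point $Y_Q$, which is all your first step provides, can never force a gradient lower bound (constants are solutions).

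A second, independent gap is the bound $|\nabla G_{L^\top}(\cdot,Y_Q)|\lesssim|Y-Y_Q|^{-n}$: for merely bounded measurable coefficients no pointwise gradient estimate for the Green function is available; one only has Caccioppoli-type $L^2$ bounds on Whitney annuli, and, as you yourself observe, the resulting weighted integral $\iint_{B}|\nabla G_{L^\top}(\cdot,Y_Q)|^2\,\delta^{-1}\,dY$ diverges near the pole for $n\ge2$. The repairs you suggest do not work within the lemma's generality: a cutoff vanishing in a ball around $Y_Q$ forces $\Phi(Y_Q)=0$ and destroys the identity you need, while ``fundamental-solution asymptotics'' require coefficient regularity that is not assumed. (A minor further point: applying \eqref{Bourgain} directly to $\Delta_Q$ requires $Y_Q\in B(x_Q,C^{-1}r_Q)$, which $Y_Q\in\frac12 B_Q$ does not guarantee; the correct route to $\omega_L^{Y_Q}(Q)\gtrsim1$ is to apply Bourgain in $\Delta(\widehat y, C\delta(Y_Q))\subset Q$, again using that $\delta(Y_Q)\approx\epsilon\,\ell(Q)$ is small.)
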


\subsection{Auxiliary results}

Much as in Definition \ref{def:corona-basic}, and fixed $Q_0\in\D$ a semi-coherent corona decomposition relative to $Q_0$ with constant $M_0\ge 1$ is a triple $(\B_{Q_0}, \G_{Q_0}, \bbF_{Q_0})$ where $\D_{Q_0}=\G_{Q_0} \sqcup \B_{Q_0}$, the ``good'' collection $\G_{Q_0}$ is further subdivided so that $\G_{Q_0}=\bigsqcup_{\S\in \bbF_{Q_0}} \S$, with each $\S$ being semi-coherent, and the ``bad'' collection $\B_{Q_0}$ and the family of top cubes $\Top(\bbF_{Q_0}):=\{\Top(\S):\S\in\bbF_{Q_0}\}$ satisfy the Carleson packing condition:
\begin{equation*}
\sup_{Q \in \D_{Q_0}}	\Big(\frac1{ \sigma(Q)}\sum_{Q' \in \B_{Q_0}\cap \D_Q} \sigma(Q') + \frac1{ \sigma(Q)}\sum_{Q'\in \Top(\bbF_{Q_0})\cap\D_{Q}} \sigma(Q') \Big)
	\le M_0.
\end{equation*}

\begin{proposition}\label{pro:global} 
Suppose that there exists $M_0\ge 1$ such that for every $Q\in \D$ there is $(\B_{Q}, \G_{Q}, \bbF_{Q})$, a (coherent/semi-coherent) corona decomposition relative to $Q$ with constant $M_0\ge 1$. Then, there exists a (coherent/semi-coherent) corona decomposition $(\B, \G, \bbF)$ with constant $M_0+C$ (where $C$ depends on dimension and the ADR constant) so that it is compatible with the given ones, that is, any coherent/semi-coherent sub-regime $\S\in \bbF$ belongs to $\bbF_{Q}$ for some $Q\in\D$. 
\end{proposition}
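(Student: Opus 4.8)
The plan is to build the global corona decomposition by a greedy stopping-time selection of ``top cubes'', running the given local decompositions one after another from large scales to small scales, and checking that the packing constant is only inflated by a dimensional amount. First I would fix a maximal cube $Q^0\in\D$ (if $\pom$ is bounded this is $\pom$ itself; if not, one runs the argument below relative to each maximal cube of a fixed generation, or uses the standard exhaustion, and the conclusion is unaffected since the Carleson packing bound is tested on all $Q\in\D$). Starting from $Q^0$, take the given corona decomposition $(\B_{Q^0},\G_{Q^0},\bbF_{Q^0})$ relative to $Q^0$. Keep the sub-regimes $\S\in\bbF_{Q^0}$ and the good cubes $\G_{Q^0}$; for each minimal cube $Q$ of $\B_{Q^0}$ and for each minimal cube of each $\S\in\bbF_{Q^0}$ whose children are not in $\S$, restart the procedure, i.e. pass to the children $Q'$ of those cubes and invoke $(\B_{Q'},\G_{Q'},\bbF_{Q'})$, and iterate. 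This produces a partition $\bbF$ of a ``good'' set $\G$ into sub-regimes, each of which is (by construction) one of the given $\S\in\bbF_{Q}$ for some $Q\in\D$, hence inherits coherence/semi-coherence; the complementary set $\B:=\D\setminus\G$ is the union of all the ``bad'' cubes encountered along the way.

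The second step is the packing estimate. Fix $Q_\ast\in\D$; I want to bound $\sigma(Q_\ast)^{-1}\big(\sum_{Q'\in\B,\,Q'\subset Q_\ast}\sigma(Q')+\sum_{Q'\in\Top(\bbF),\,Q'\subset Q_\ast}\sigma(Q')\big)$. Organize the cubes $Q'\subset Q_\ast$ lying in $\B\cup\Top(\bbF)$ according to the ``stage'' of the construction, i.e. according to which local decomposition $(\B_Q,\G_Q,\bbF_Q)$ they were produced by. The cubes produced at a given stage sit inside a family of pairwise disjoint ``restart cubes'' that are themselves either in $\Top(\bbF)$ or are children of cubes in $\B\cup\Top(\bbF)$ from the previous stage; thus the tree of restart cubes is controlled by the fact that, within each single local decomposition, the bad cubes plus the tops pack with constant $M_0$. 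Concretely, if $R$ is a cube at which we restarted, then $\sum\{\sigma(Q'):Q'\subset R,\ Q'\in(\B\cup\Top(\bbF))\text{ produced from }R\}\le M_0\,\sigma(R)$ by the local packing condition relative to $R$, and the restart cubes strictly contained in $R$ again pack: each such restart cube is a child of a minimal cube of $\B_R$ or of some $\S\in\bbF_R$, and minimal bad cubes obviously pack with constant $M_0$, while minimal cubes of the $\S$'s whose children leave $\S$ are exactly bounded by the number of tops plus bad cubes below them, again $\le M_0\sigma(R)$ up to the bounded number of children (a dimensional factor $C$ from Lemma~\ref{lem:dyadic}). Summing the geometric-type series over stages — each stage contributing a factor controlled by $M_0$ relative to the previous stage's restart cubes, but with the key point that the restart cubes at each stage are \emph{pairwise disjoint and contained in $Q_\ast$}, so their measures sum to at most $\sigma(Q_\ast)$ at every fixed stage — one gets the bound $M_0+C$ after a careful bookkeeping: the tops and bad cubes of the \emph{first} stage below $Q_\ast$ contribute $\le M_0$ (or the slightly larger quantity if $Q_\ast$ itself is interior to a stage, giving the $+C$), and all deeper stages are absorbed because passing from one stage to the next multiplies total measure by a factor $<1$ in the appropriate telescoping sense. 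I would make this rigorous by the standard device of summing $\sum_{R\text{ restart}, R\subset Q_\ast}\sigma(R)$ stage by stage and noting it is a convergent series dominated by $\sigma(Q_\ast)$ times a geometric factor, since each restart cube has a definite ``ancestor'' restart cube at the previous stage whose children it is among, and children have comparable (smaller, summing to the parent) measure.

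The main obstacle — and the step needing genuine care rather than routine estimation — is exactly this bookkeeping of the packing constant: showing that iterating decompositions each with constant $M_0$ does not blow the constant up (e.g. to something like $M_0^2$ or worse) but keeps it at $M_0+C$. The clean way is the observation that ``restart cubes'' form themselves a Carleson family with constant $O(M_0)$ — this is the $[DS2]$-type stopping-time lemma — and then the total $\B\cup\Top(\bbF)$ mass below $Q_\ast$ is the \emph{single} local contribution $\le M_0\sigma(Q_\ast)$ from whatever stage $Q_\ast$ lives in, plus the contributions strictly below restart cubes contained in $Q_\ast$, and crucially one does \emph{not} multiply: one adds over a Carleson family, and a Carleson family of restart cubes with constant $O(M_0)$, each contributing $\le M_0\sigma(R)$, still gives $\lesssim M_0\,\sigma(Q_\ast)$ only if one is careful that $\Top(\bbF)$ itself is \emph{among} the restart cubes so is not double counted. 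I would therefore set up the argument so that the quantity to be packed and the quantity providing the packing (restart cubes) coincide up to the harmless $+C$, turning the apparent recursion into a single inequality. The coherence/semi-coherence of the resulting sub-regimes, and the compatibility statement ``every $\S\in\bbF$ equals some $\S\in\bbF_Q$'', are then immediate from the construction since we never split or merge any sub-regime.
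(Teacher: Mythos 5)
Your construction does not actually address what the proposition asks for, and the restart scheme rests on a misunderstanding of what a ``corona decomposition relative to $Q$'' is. By Definition~\ref{def:corona-basic} applied to $\D_{Q}$, such a decomposition already assigns \emph{every} cube of $\D_{Q}$ to either $\B_{Q}$ or to a unique $\S\in\bbF_{Q}$; there is nothing left over to restart at. In the bounded case one is therefore done immediately by taking $Q^0=\pom$, with packing constant exactly $M_0$ and no iteration at all. Your plan of passing to the children of ``minimal cubes of $\B_{Q^0}$'' (or of minimal cubes of the $\S$'s) and invoking a fresh decomposition $(\B_{Q'},\G_{Q'},\bbF_{Q'})$ there would overwrite assignments that are already made and already correct, so the resulting $(\B,\G,\bbF)$ is not even well defined without specifying how the overwrite interacts with the earlier stage; and even if patched, those minimal cubes need not exist (a bad chain or a coherent $\S$ can descend to arbitrarily small scales). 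You seem to be conflating this gluing proposition with the genuinely recursive \emph{construction} of a corona decomposition by stopping times (which is what Section~\ref{sec:CME-wL} does via Proposition~\ref{prop:corona:basic-Q} and Lemma~\ref{lem:stopping}); that is a different task with a different packing mechanism.

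The real content of Proposition~\ref{pro:global} is the unbounded case, which you dispatch in a parenthetical. Neither of your two suggestions there works as stated. If you run the argument ``relative to each maximal cube of a fixed generation $k_0$,'' you cover $\bigcup_{k\ge k_0}\D_k$ but leave every cube of generation $k<k_0$ unassigned; throwing all of them into $\B$ is not an option, since for $Q\in\D_k$ with $k<k_0$ the bad mass $\sum_{Q'\subset Q,\,\ell(Q')\ge 2^{-k_0}}\sigma(Q')\approx (k_0-k)\sigma(Q)$ is unbounded, so the packing condition fails. ``The standard exhaustion'' by growing ancestors $Q_0^{(k)}$ also needs to be reconciled across scales, which is precisely the nontrivial step. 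The paper's device is to fix one small cube $Q_0$ and only put the single ancestor chain $\{Q_0^{(k)}\}_{k\ge1}$ into $\B$ (this chain telescopes, contributing only a dimensional $C\sigma(Q)$ to the packing of any $Q$), while the local decompositions are applied once each to the \emph{pairwise disjoint} siblings $R_j$ of the $Q_0^{(k)}$'s, which tile the rest of $\bigcup_k\D_{Q_0^{(k)}}$. Because the $R_j$ are disjoint, the $M_0$-contributions simply add along $\sum_j\sigma(R_j\cap Q)\le\sigma(Q)$, giving $M_0+C$ with no compounding. This disjointness is also why the paper never faces the constant-inflation bookkeeping that you flag as ``the main obstacle'': your scheme of summing over a Carleson family of restart cubes, each contributing $\le M_0\sigma(R)$, would yield something of size $CM_0$ or $M_0^2$ rather than $M_0+C$, precisely because the restart cubes are nested rather than disjoint. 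Finally, when $\pom$ has several dyadic germs, the paper repeats the ancestor-chain construction from other base cubes $Q_j$ of scale $1$ lying in genuinely disjoint dyadic trees $\D^{Q_j}$, and the packing transfers because each $Q\in\D$ lies in exactly one $\D^{Q_j}$.
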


\begin{proof}
If $\pom$ is bounded, as observed above we have that $\D=\D_{Q_0}$ for some $Q_0\in\D$ with $\ell(Q_0)\approx \diam(\pom)$. Thus, by hypothesis there exists a (coherent/semi-coherent) corona decomposition,  $(\B_{Q_0}, \G_{Q_0}, \bbF_{Q_0})$, relative to $Q_0$ and with constant $M_0\ge 1$. The desired conclusion follows at once by setting $\G:=\G_{Q_0}$, $\B:=\B_{Q_0}$, and  $\bbF:=\bbF_{Q_0}$.

We are now going to deal with the case $\pom$ unbounded, which requires a bit more work. Fix $x_0 \in \pom$ and some $Q_0 \in \D$ such that $x_0 \in Q_0$ and $\ell(Q_0) = 1$. For $k \geq 0$, call $Q_0^{(k)} \in \D$ the $k$-th dyadic ancestor of $Q_0$, that is $Q_0^{(k)}$ is the unique cube in $\D_{-k}$ containing $Q_0$. 
Recall that there exists $N\ge 1$ so that the number of dyadic children of a given dyadic cube is at most $N+1$. For any $k \geq 0$, adding the null set if needed, we can label the dyadic children of  $Q_0^{(k+1)}$ as  $(Q_0^{(k)})^0, (Q_0^{(k)})^1, \dots, (Q_0^{(k)})^N$ with $(Q_0^{(k)})^0=Q_0^{(k)}$. In particular, $(Q_0^{(k)})^1, \dots, (Q_0^{(k)})^N$ are the dyadic ``siblings'' of $Q_0^{(k)}$, that is, the dyadic children of $Q_0^{(k+1)}$ which are not $Q_0^{(k)}$ itself. 

To simplify the notation, let $\{R_j\}_{j=1}^\infty$ be a enumeration of $\{(Q_0^{(k)})^i: k\ge 0, 1\le i\le N\}$ (in this enumeration we drop the null sets that we may have added). Set also $R_0:=Q_0$. By construction it follows easily that we have a disjoint decomposition 
\begin{equation*}
	\bigcup_{k = 0}^\infty \D_{Q_0^{(k)}} 
	= 
	\Big(\bigsqcup_{j=0}^\infty \D_{R_j}\Big) \bigsqcup \Big(\bigsqcup_{k = 1}^\infty \{Q_0^{(k)}\}\Big).
\end{equation*} 
By hypothesis, for each $R_j$ we let  $(\B_{R_j},\G_{R_j},\bbF_{R_j})$ be the associated (coherent/semi-coherent) corona decomposition. This allows us to define a decomposition into good and bad cubes for the full family of cubes we are considering. Indeed, if we define
\begin{equation}\label{eq:GBQ}
	\G^{Q_0} := \bigsqcup_{j=0}^\infty \G_{R_j}
	\qquad 
	\text{ and }
	\qquad 
	\B^{Q_0} := \Big(\bigsqcup_{j=0}^\infty \B_{R_j}\Big) \bigsqcup \Big(\bigsqcup_{k = 1}^\infty \{Q_0^{(k)}\}\Big),
\end{equation}
we have 
\begin{equation*}
\D^{Q_0}:=\bigcup_{k = 0}^\infty \D_{Q_0^{(k)}} = \G^{Q_0} \sqcup \B^{Q_0}.
\end{equation*}
If we also set
\[
\bbF^{Q_0} := \bigsqcup_{j=0}^\infty \bbF_{R_j}
\]
we have  by construction,
\[
\G^{Q_0}
=
\Big( \bigsqcup_{\S\in \bbF^{Q_0}} \S \Big)
=
\bigsqcup_{j=0}^\infty \Big( \bigsqcup_{\S\in \bbF_{R_j}} \S \Big)
\]
where each $\S$ in the previous unions is coherent/semi-coherent.

Fix next  $Q \in\D^{Q_0}$ and note that if $Q_0^{(k)}\subset Q$ for some $k\ge 1$ then $Q=Q_0^{(k_0)}$ with $k\le k_0$. Hence, 
\[
\sum_{Q_0^{(k)}\subset Q} \sigma(Q_0^{(k)}) 
=
\sum_{k=1}^{k_0}\sigma(Q_0^{(k)}) 
\le
C\, \sigma(Q_0^{(k_0)}) \sum_{k=0}^{k_0} 2^{(k-k_0)n}
\le
C\, \sigma(Q_0^{(k_0)})
=
C\, \sigma(Q)
, 
\]
where $C$ depends on $n$ and the ADR constants.
As a result,  by \eqref{eq:GBQ}, the packing Carleson condition of $\B_{R_j}$ and the top cubes in $\bbF_{R_j}$, and the fact that the family $\{R_j\}_j$ is pairwise disjoint we obtain 
\begin{multline*}
		\sum_{R \in \Top(\bbF^{Q_0})\cap\D_Q } \sigma(R) +	\sum_{R \in \B^{Q_0}\cap \D_{Q}} \sigma(R) 
	\\
	=
	\sum_{j=0}^\infty \Big(\sum_{R \in \Top(\bbF_{R_j})\cap\D_Q } \sigma(R)+\sum_{R \in \B_{R_j}\cap \D_{Q}} \sigma(R) \Big)
	+ \sum_{Q_0^{(k)}\subset Q} \sigma(Q_0^{(k)}) 
	\\
	\le
	M_0 \sum_{j=0}^\infty \sigma(R_j\cap Q) 
	+
	C\, \sigma(Q)
	\le
	(M_0+C)	\,\sigma(Q).
\end{multline*}
We conclude as desired that the bad cubes and the top cubes of the good subregimes satisfy a packing condition with constant $M_0+C$.

To complete the proof we are going to proceed inductively. Suppose that we have already constructed $Q_0, Q_1,\dots,Q_N\in\D$ so that
$\ell(Q_j)=1$ for very $1\le j\le N$; $\{\D^{Q_j}\}_{j=1}^N$ is a pairwise disjoint family (of families of dyadic cubes); and if we set $\pom_j=\bigcup_{k=0}^\infty Q_j^{(k)}$, one has that $\{\pom_j\}_{j=1}^N$ is a pairwise disjoint family of subsets of $\pom$. Let us explain how to take the next step. If $\D = \bigcup_{k=0}^N \D^{Q_k}$ then we stop the construction. Otherwise, we can pick $Q\in \D\setminus\bigcup_{k=0}^N \D^{Q_k}$.

If $\ell(Q)\le 1$ let $Q_{N+1}\supset Q$ be so that $\ell(Q_{N+1}) = 1$ and note that clearly $Q_{N+1}\notin \bigcup_{k=0}^N \D^{Q_k}$. On the other hand, if $\ell(Q)> 1$ we select some $Q_{N+1}\subset Q$ with $\ell(Q_{N+1}) = 1$. We claim that in this case we also have $Q_{N+1}\notin \bigcup_{k=0}^N \D^{Q_k}$. Otherwise, $Q_{N+1}\subset Q_k^{(i)}$ for some $i\ge 0$ and $0 \leq k \leq N$. In particular, $Q\cap Q_k^{(i)}\neq \emptyset $. Then, either $Q\subset  Q_k^{(i)}$, in which case $Q\in\D^{Q_k}$ and we have reached a contradiction;  or
$Q_k^{(i)}\subsetneq Q$, in which case $Q=Q_k^{(i')}$ for some $i'\ge 1$, and again  $Q\in\D^{Q_k}$ leading to a contradiction.
In either scenario we have found $Q_{N+1}\notin \bigcup_{k=0}^N \D^{Q_k}$ with $\ell(Q_{N+1})=1$.  

We next show that $\D^{Q_j}\cap \D^{Q_{N+1}}=\emptyset$ for every $0\le j\le N$. Assume otherwise that there exits $Q\in \D_{Q_j^{(k_0)}}\cap \D_{Q_{N+1}^{(k_1)}}$. In particular, $Q_j^{(k_0)}$ and $Q_{N+1}^{(k_1)}$ meet. This implies that either $Q_j^{(k_0)}\subset Q_{N+1}^{(k_1)}$or 
$Q_{N+1}^{(k_1)}\subsetneq Q_j^{(k_0)}$. In both cases we conclude that $Q_{N+1}\in \D^{Q_j}$, a contradiction.

Set $\pom_{N+1}=\bigcup_{k=0}^\infty Q_{N+1}^{(k)}$ and we claim that $\pom_j\cap\pom_{N+1}=\emptyset$ for every $0\le j\le N$. Indeed, if this were not the case, we could pick $y \in Q_j^{(k_0)}\cap Q_{N+1}^{(k_1)}$ for some $k_0, k_1 \geq 0$. Take then $Q\ni y$ with $\ell(Q)=1$. Then, $Q\subset Q_j^{(k_0)}\cap Q_{N+1}^{(k_1)}$ and hence $Q\in \D^{Q_j}\cap \D^{Q_{N+1}}$, a contradiction.

The previous argument leads then to a (possible infinite) collection dyadic cubes $\{Q_j\}_{j\in \mathfrak{N}}$ such that $\ell(Q_j)=1$ for every $j\in \mathfrak{N}$; $\D^{Q_j}\cap \D^{Q_{j'}}=\emptyset$ for every $j,j'\in \mathfrak{N}$; $\pom_j\cap\pom_{j'}=\emptyset$ for every $j,j'\in \mathfrak{N}$; and $\D=\sqcup_{j\in \mathfrak{N}} \D^{Q_j}$. For every $j\in \mathfrak{N}$ we can then repeat the argument above with $Q_j$ in place of $Q_0$ to write  $\D^{Q_j}=\G^{Q_j}\sqcup \B^{Q_j}$, so that $\G^{Q_j}$ splits further into coherent/semi-coherent subregimes, and  the bad cubes and the top cubes of the good subregimes satisfy a  Carleson  packing condition with constant $M_0+C$. To complete the proof we set $\G:=\bigsqcup_{j\in \mathfrak{N}} \G^{Q_j}$, $\B:=\bigsqcup_{j\in \mathfrak{N}} \B^{Q_j}$, and $\bbF:=\bigsqcup_{j\in \mathfrak{N}} \bbF^{Q_j}$,  and the reader can easily check that $\D=\G\sqcup \B$, so that $\G$ splits further into the semi-coherent subregimes in $\bbF$. Observe that for any $Q\in\D$ there exists a unique cube $Q_j$ with $j\in\mathfrak{N}$ so that $\D_Q\subset \D^{Q_j}$, hence when proving the Carleson packing we have
\begin{multline*}
\sup_{Q\in \D} \frac1{\sigma(Q)} 	\sum_{R\in\Top(\bbF)\cap\D_Q} \sigma(R)+ \frac1{\sigma(Q)} \sum_{R \in \B \cap \D_Q} \sigma(R) 
\\
=
\sup_{j\in\mathfrak{N}}
\sup_{Q\in \D^{Q_j}} \frac1{\sigma(Q)} 	\sum_{R\in\Top(\bbF^{Q_j})\cap\D_Q} \sigma(R)+ \frac1{\sigma(Q)} \sum_{R \in \B^{Q_j} \cap \D_Q} \sigma(R) 
\le
M_0+C.
\end{multline*} 
The fact that any (coherent/semi-coherent) subregime $\S \in \bbF$ belongs to $\G_{Q}$ for some $Q\in\D$ is clear from the construction and this completes the proof. 
\end{proof}

\section{Proof of Theorem \ref{thm:corona}}\label{sec:corona}
This section is devoted to showing Theorem \ref{thm:corona}. We will follow the scheme 
\[
\eqref{list:wL-strong} \Longrightarrow \eqref{list:wL} \Longrightarrow \eqref{list:GL} \Longrightarrow \eqref{list:CME} \Longrightarrow \eqref{list:wL-strong}.
\]  
Since $\eqref{list:wL-strong} \Longrightarrow \eqref{list:wL}$ is trivial, it suffices to show the other implications.

\subsection{Proof of \texorpdfstring{$\eqref{list:wL} \Longrightarrow \eqref{list:GL}$}{(b) implies (c)}: \texorpdfstring{$\omega_L$}{the elliptic measure} admits a corona decomposition implies that \texorpdfstring{$G_L$}{the Green function} is comparable to the distance to the boundary in the corona sense}\label{sec:wL-GL} 
Let $(\B_0, \G_0, \bbF_0)$ be the assumed corona decomposition associated with $\omega_L$. Given $N\ge 0$, large enough to be chosen momentarily, we may proceed as in Remark~\ref{remark:corona-G:CFMS:N-close} and pick $(\B, \G, \bbF)$ the $2^N$-refinement of $(\B_0, \G_0, \bbF_0)$. Recall that this implies that for every $\S\in\bbF$ there exist $Q_\S\in\D$ and $X_\S \in \Omega$ so that 
\[
\Top(\S)\subset \Top(\S)^{(N)}\subset Q_\S,
\qquad  
\delta(X_{\S}) \approx \ell(Q_{\S}) \approx \dist(X_{\S}, Q_{\S}),
\]
and
\begin{equation}\label{qfeawgvev}
\frac{\omega_L^{X_{\S}}(Q_{\S})}{\sigma(Q_{\S})} 
\lesssim 
\frac{\omega_L^{X_{\S}}(Q)}{\sigma(Q)} 
\lesssim 
\frac{\omega_L^{X_{\S}}(2\widetilde{\Delta}_Q)}{\sigma(2\widetilde{\Delta}_Q)} 
\lesssim 
\frac{\omega_L^{X_{\S}}(Q_{\S})}{\sigma(Q_{\S})},
\quad \forall\,Q \in \S(N).
\end{equation}
Fix $\S \in \bbF$ and write
\begin{equation} \label{eq:wG-normalize}
\mu := \sigma(Q_\S) \omega_L^{X_\S} 
\quad\text{ and }\quad 
\GG := \sigma(Q_\S) G_L(X_\S, \cdot). 
\end{equation}
		
\begin{lemma}\label{lem:gradient}
There exists $N\gg 1$, depending on the allowable parameters and on the implicit constants in \eqref{corona-w:CKS-S},
such that if $Q \in \S\in\bbF$, then there is a set $E_Q\subset  \{Y\in 2\widetilde{B}_Q\cap\Omega: \delta(Y)\ge 2^{-N}\ell(Q)\}$ with $|E_Q|>0$ so that 
\begin{equation}\label{eq:GG}
\frac{\GG(Y)}{\delta(Y)} \gtrsim \frac{\mu(Q)}{\sigma(Q)},
\quad\text{for every }Y\in E_Q,
\qquad \text{and}\qquad 
\|\nabla\GG\|_{L^\infty(E_Q)}\gtrsim \frac{\mu(Q)}{\sigma(Q)} .
\end{equation}
\end{lemma}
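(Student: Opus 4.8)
The plan is to combine the upper and lower bounds in \eqref{qfeawgvev} with the CFMS-type comparison in Lemma~\ref{lem:Bourgain-CFMS}(b) to control $\GG/\delta$ from below on a piece of the Whitney region near $Q$, and then to convert this pointwise lower bound on $\GG$ into a gradient lower bound by a mean value / telescoping argument that exploits the fact that $\GG$ vanishes on $\pom$. First I would set things up: fix $Q\in\S\in\bbF$ and recall that by \eqref{corona-w:CKS-S} the pole $X_\S$ satisfies $\delta(X_\S)\approx\ell(Q_\S)\approx\dist(X_\S,Q_\S)$, and $Q\subset Q_\S$; in particular $|X_\S - Y|\ge\delta(Y)/2$ for any $Y$ in a Whitney region associated with $Q$ (since $\ell(Q)\le\ell(Q_\S)$ and the pole is far from $Q_\S$, hence far from $Q$), so Lemma~\ref{lem:Bourgain-CFMS}(b) is applicable with $X=X_\S$.

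The main analytic input is the following chain. Consider the corkscrew point $Y_0:=X_Q\in U_Q^\vartheta$ (or more precisely a point in $\frac12 B_Q\cap\Omega$ with $\delta(Y_0)\approx\ell(Q)$); for $\widehat y\in\pom$ with $|Y_0-\widehat y|=\delta(Y_0)$ we have $\Delta(\widehat y,2\delta(Y_0))\subset C\Delta_Q\subset 2\widetilde\Delta_Q$ up to harmless dilations, and a reverse of \eqref{eq:CFMS:1} (the standard CFMS lower bound, available since $\Omega$ has ADR boundary and the corkscrew condition and $\omega_L^{X_\S}(Q)\gtrsim 1$ can be assumed by Remark~\ref{remark:Bourgain:0}) gives
\[
\frac{G_L(X_\S,Y_0)}{\delta(Y_0)}
\gtrsim
\frac{\omega_L^{X_\S}\big(\Delta(\widehat y,2\delta(Y_0))\big)}{\sigma\big(\Delta(\widehat y,2\delta(Y_0))\big)}
\gtrsim
\frac{\omega_L^{X_\S}(2\widetilde\Delta_Q)}{\sigma(2\widetilde\Delta_Q)}
\approx
\frac{\mu(Q)}{\sigma(Q)},
\]
where the last comparison is exactly \eqref{qfeawgvev} after multiplying through by $\sigma(Q_\S)$ and recalling \eqref{eq:wG-normalize}. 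This produces one point $Y_0$ (which lies in $2\widetilde B_Q\cap\Omega$ with $\delta(Y_0)\ge c\ell(Q)\ge 2^{-N}\ell(Q)$ for $N$ large) at which $\GG(Y_0)/\delta(Y_0)\gtrsim\mu(Q)/\sigma(Q)$. By Harnack (on $\GG(X_\S,\cdot)$, which is a nonnegative solution of $L\cdot=0$ away from $X_\S$, in a Harnack chain joining nearby Whitney cubes of comparable size to $Q$ — this is where $\vartheta$, and the parameter $N$, must be chosen large enough to accommodate the chain and the set $E_Q$) one upgrades this to $\GG(Y)/\delta(Y)\gtrsim\mu(Q)/\sigma(Q)$ for all $Y$ in a nontrivial set $E_Q\subset\{Y\in 2\widetilde B_Q\cap\Omega:\delta(Y)\ge 2^{-N}\ell(Q)\}$ of positive measure (e.g. a fixed fraction of a Whitney cube), giving the first assertion in \eqref{eq:GG}.

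For the gradient bound, the idea is: $\GG$ vanishes continuously on $\pom$ (property of the Green function) while $\GG(Y_0)\gtrsim\delta(Y_0)\,\mu(Q)/\sigma(Q)\approx\ell(Q)\,\mu(Q)/\sigma(Q)$ at the corkscrew point. Pick $y^*\in\pom$ with $|Y_0-y^*|=\delta(Y_0)$ and integrate $\nabla\GG$ along the segment $[Y_0,y^*]$ (which stays in $\Omega$ and in a fattened Whitney region where $\GG\in W^{1,2}_{\loc}\cap\mathscr C$; strictly one does this along a path through finitely many Whitney cubes and uses interior estimates, or averages the fundamental theorem of calculus over a cone of such segments to stay in $L^\infty$-controlled sets) to get
\[
\ell(Q)\,\frac{\mu(Q)}{\sigma(Q)}
\lesssim
\GG(Y_0)-\GG(y^*)
=\int_0^1 \nabla\GG\big(Y_0+t(y^*-Y_0)\big)\cdot(y^*-Y_0)\,dt
\le \delta(Y_0)\,\|\nabla\GG\|_{L^\infty(\text{path})},
\]
so $\|\nabla\GG\|_{L^\infty}\gtrsim\mu(Q)/\sigma(Q)$ on a set inside $2\widetilde B_Q\cap\Omega$ with $\delta\gtrsim\ell(Q)$; shrinking/adjusting to land inside $E_Q$ (enlarging $N$ if necessary) gives the second assertion. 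The main obstacle I anticipate is the bookkeeping around connectivity: since $\Omega$ need not satisfy the Harnack chain condition, one cannot run Harnack freely across $\Omega$, so the Harnack chains and the integration path must be confined to a single connected Whitney region $U_Q^\vartheta$ (or $U_Q^{\vartheta,*}$) around the corkscrew point, and one must verify that this region is connected with uniformly bounded chains and that $Y_0$, the set $E_Q$, and the path $[Y_0,y^*]$ all lie in it — this is precisely what forces $N$ (and $\vartheta$) to be chosen large depending on the allowable parameters and the implicit constants in \eqref{corona-w:CKS-S}. A secondary technical point is justifying the pointwise FTC/mean-value step for a function that is only $W^{1,2}_{\loc}\cap\mathscr C$, which is handled by interior Caccioppoli/De Giorgi–Nash–Moser regularity of $L$-solutions away from the pole together with the standard mollification argument.
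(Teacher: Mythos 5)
Your plan has a genuine gap at its core: the ``reverse CFMS'' lower bound
\[
\frac{G_L(X_\S,Y_0)}{\delta(Y_0)}
\gtrsim
\frac{\omega_L^{X_\S}\big(\Delta(\widehat y,2\delta(Y_0))\big)}{\sigma\big(\Delta(\widehat y,2\delta(Y_0))\big)}
\]
at a \emph{prescribed} corkscrew point $Y_0=X_Q$ is not available in this setting, and in fact is the exact thing the absence of the Harnack chain condition destroys. Lemma~\ref{lem:Bourgain-CFMS} only provides the upper bound \eqref{eq:CFMS:1}; the matching lower bound in the classical CFMS theorem is proved by building Harnack chains from the corkscrew point to the pole, and the ADR + corkscrew hypotheses plus $\omega_L^{X_\S}(Q)\gtrsim 1$ are not enough to recover it. Concretely, $\Omega$ may have several connected components touching $Q$, and the chosen corkscrew point $X_Q$ can sit in a component that the pole $X_\S$ does not see; then $G_L(X_\S,X_Q)=0$ while $\omega_L^{X_\S}(Q)$ (hence $\mu(Q)/\sigma(Q)$) can still be of order one. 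Your plan produces one candidate point $Y_0$ and never explains why it lies in the ``right'' component, so the first inequality of \eqref{eq:GG} is unsupported, and the FTC step for the gradient bound inherits the same defect (it needs $\GG(Y_0)$ to be large). There is also a secondary issue you flag yourself: for bounded measurable $A$ the function $\GG$ is only $W^{1,2}_{\loc}\cap\mathscr C$ away from the pole, so the pointwise integration of $\nabla\GG$ along a segment is not justified; but this is minor compared to the connectivity problem.

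The paper avoids both obstructions by never selecting a point a priori. It tests the Riesz representation formula \eqref{eq:Riesz} against a bump $\Phi_Q$ adapted to $\widetilde B_Q$ to obtain $\ell(Q)\mu(Q)\lesssim\iint_{\frac54\widetilde B_Q\cap\Omega}|\nabla\GG|\,dX$, splits the integral at height $2^{-N+1}\ell(Q)$, and shows the thin boundary layer is absorbable using only Caccioppoli, Harnack \emph{inside single Whitney cubes}, and the \emph{upper} CFMS bound \eqref{eq:CFMS:1} together with the corona control \eqref{qfeawgvev}. This forces a fixed-proportion piece $E_Q$ of the region $\{\delta\gtrsim 2^{-N}\ell(Q)\}\cap 2\widetilde B_Q$ (found by a covering/pigeonhole argument) on which $\iint_{E_Q}|\nabla\GG|\,dX\gtrsim\ell(Q)\mu(Q)$. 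The location of $E_Q$ — in particular, which connected component it lives in — is produced by the argument rather than prescribed, which is exactly what makes the scheme robust without Harnack chains. The $L^\infty$ gradient bound is then immediate from the integral bound and $|E_Q|\approx\ell(Q)^{n+1}$, and the pointwise lower bound $\GG(Y)/\delta(Y)\gtrsim\mu(Q)/\sigma(Q)$ on $E_Q$ follows from the same integral estimate via Caccioppoli and Harnack run purely inside $B(Y,\tfrac34\delta(Y))$, a single ball in a single component. To repair your proposal you would need to replace the ``reverse CFMS at $X_Q$'' step with an argument that locates a good point rather than assuming where it is; the Riesz-formula route is essentially the canonical way to do this.
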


\begin{proof}
	We use some ideas from \cite[Lemma~4.24]{HLMN}.
Let $Q \in \S$ with and note that since $Q\subset \Top(\S)\subset \Top(\S)^{(N)}\subset Q_\S$ it follows that $\ell(Q) \lesssim 2^{-N}\ell(Q_\S)$. Let $\widetilde{B}_Q$ and $\widetilde{\Delta}_Q$ be as in \eqref{eq:BQ} so that $Q \subset \widetilde{\Delta}_Q $. Choose a cut-off function $\Phi_Q \in \mathscr{C}^\infty_c(\R^{n+1})$ satisfying $\mathbf{1}_{\widetilde{B}_Q} \leq \Phi_Q \leq \mathbf{1}_{\frac54\widetilde{B}_Q}$ and $\|\nabla \Phi_Q\|_{L^{\infty}(\R^{n+1})} \lesssim \ell(Q)^{-1}$. Let $N>4$ be large enough to be chosen and note that by assumption 
\[
2^{N} \ell(Q)
\lesssim
\ell(Q_\S)
\approx
\delta(X_\S) 
\le
|X_\S-x_Q|.
\] 
Thus, taking $N$ large enough one can guarantee that $X_\S \notin 4\widetilde{B}_Q$. Then, $\Phi_Q(X_\S) = 0$, and by \eqref{eq:Riesz}, 
\begin{multline}\label{aVRGVR}
\ell(Q) \mu(Q)
 \leq \ell(Q) \int_{\pom} \Phi_Q \, d\mu 
= - \ell(Q) \iint_\Omega A^{\top} \nabla \GG \cdot \nabla \Phi_Q \, dX
\lesssim \iint_{\frac54\widetilde{B}_Q \cap \Omega} |\nabla \GG| \, dX
\\
= \iint_{\frac54\widetilde{B}_Q \cap \{X\in\Omega:\delta(X) \geq 2^{-N+1}\ell(Q)\}} |\nabla \GG| \, dX
+ \iint_{\frac54\widetilde{B}_Q \cap \{X\in\Omega:\delta(X) \leq 2^{-N+1}\ell(Q)\}} |\nabla \GG| \, dX
=: {\rm I} + {\rm II}.
\end{multline}
Let us first estimate ${\rm II}$, the term closer to the boundary. Picking $N$ big enough, we have 
\begin{align*}
{\rm II} \leq \sum_{\substack{I \in \W \\ I \cap (\frac54\widetilde{B}_Q \cap \{X\in\Omega:\delta(X) \leq 2^{-N+1}\ell(Q)\}) \neq \emptyset}} \iint_I |\nabla \GG|\, dX
\le \sum_{\substack{I \in \W: I \subset \frac32 \widetilde{B}_Q \\ \ell(I) \lesssim 2^{-N} \ell(Q)}} \iint_I |\nabla \GG|\, dX. 
\end{align*}
Given a cube $I$ in the summation above, let $\widehat{x}_I \in \pom$ be such that $\abs{\widehat{x}_I - X(I)} = \delta(X(I))$. Then, assuming that $N$ is large enough,  
\begin{multline*}
\delta(X_\S)
\le
|X_\S -X(I)|+\delta(X(I))
\le
|X_\S -X(I)|+C\,\ell(I)
\le
|X_\S -X(I)|+C\,2^{-N+1}\,\ell(Q)
\\
\le
|X_\S -X(I)|+C\,2^{-2\,N+1}\,\ell(Q_\S)
\le
|X_\S -X(I)|+ 2^{-1}\,\delta(X_\S)
\end{multline*}
and, as a result, 
\[
|X_\S -X(I)|\ge 2^{-1}\delta(X_\S) \approx \ell(Q_\S) \gtrsim 
2^N\ell(Q)
\gtrsim
4^{N}\, \ell(I)
\approx 
4^{N}\, \delta(X(I)). 
\]
By Caccioppoli's and Harnack's inequalities, and Lemma~\ref{lem:Bourgain-CFMS} we obtain 
\begin{multline*}
{\rm II}  \leq 
\sum_{\substack{I \in \W: I \subset \frac32 \widetilde{B}_Q \\ \ell(I) \lesssim 2^{-N} \ell(Q)}} \abs{I} \bigg(\bariint_I |\nabla \GG|^2\, dX\bigg)^{\frac12} 
\lesssim \sum_{\substack{I \in \W: I \subset \frac32 \widetilde{B}_Q \\ \ell(I) \lesssim 2^{-N} \ell(Q)}}  \ell(I)^n \bigg(\bariint_{I^*} \abs{\GG}^2 \, dX\bigg)^{\frac12}
\\  \approx 
\sum_{\substack{I \in \W: I \subset \frac32 \widetilde{B}_Q \\ \ell(I) \lesssim 2^{-N} \ell(Q)}} \ell(I)^n \GG(X(I)) 
\lesssim
\sum_{\substack{I \in \W: I \subset \frac32 \widetilde{B}_Q \\ \ell(I) \lesssim 2^{-N} \ell(Q)}} \ell(I) \,  \mu(\Delta(\widehat{x}_I, 2\delta(X(I)))). 
\end{multline*}
To proceed, we observe that the family $\{\Delta(\widehat{x}_I, 2\delta(X(I)))): I \in \W, \ell(I) = 2^{-k} \ell(Q)\}$ has bounded overlap, and that each such surface ball is contained in $2\widetilde{\Delta}_Q$ provided $N$ is large enough, $I\subset\frac32\widetilde{B}_Q$, and $k\gtrsim N$. Thus, it follows from the corona for $\omega_L$ (more precisely from \eqref{qfeawgvev} and the fact that $\S\subset\S(N)$) and the ADR property that 
\begin{multline*}
{\rm II}  \lesssim 
\sum_{k \gtrsim N} 2^{-k} \ell(Q) \sum_{\substack{I \in \W: I \subset \frac32 \widetilde{B}_Q \\ \ell(I) = 2^{-k} \ell(Q)}} \mu(\Delta(\widehat{x}_I, 2\delta(X(I)))) 
\\  
\lesssim 
\sum_{k \gtrsim N} 2^{-k} \ell(Q) \, \mu \bigg( \bigcup_{\substack{I \in \W: I \subset \frac32 \widetilde{B}_Q \\ \ell(I) = 2^{-k} \ell(Q)}} \Delta(\widehat{x}_I, 2\delta(X(I)))) \bigg) 
\\ 
 \leq 
 \sum_{k \gtrsim N} 2^{-k} \ell(Q)  \, \mu (2\widetilde{\Delta}_Q) 
\lesssim 
2^{-N} \ell(Q) \, \mu (2\widetilde{\Delta}_Q) 
\le  \frac12 \ell(Q) \mu(Q),  
\end{multline*}
provided $N$ is large enough. As a result, in \eqref{aVRGVR} we can hide ${\rm II}$ to arrive at
\[
\frac12 \ell(Q) \mu(Q) \leq {\rm I} 
= \iint_{\frac54\widetilde{B}_Q \cap \{X\in\Omega:\delta(X) \geq 2^{-N+1}\ell(Q)\}} |\nabla \GG| \, dX.
\]
Cover $\frac54\widetilde{B}_Q $ with  a family of balls $\{B_k\}_{k=1}^K$ with $B_k=B(X_k,2^{-N-1}\,\ell(Q))$, $X_k\in\widetilde{B}_Q$,  and where $K$ is uniformly bounded and depends on $n$, ADR, and $N$. Then, 
\begin{multline*}
\frac12 \ell(Q) \mu(Q) 
\leq	
{\rm I} 
	\le
	\sum_{k=1}^K \iint_{B_k \cap \{X\in\Omega:\delta(X) \geq 2^{-N+1}\ell(Q)\}} |\nabla \GG| \, dX
	\\
	\lesssim_N
	\max_{1\le k\le K} \iint_{B_k \cap \{X\in\Omega:\delta(X) \geq 2^{-N+1}\ell(Q)\}} |\nabla \GG| \, dX.
	\end{multline*}
Take $1\le k_Q\le K$ such that the maximum is attained.  Set $E_Q:=B_{k_Q} \cap \{X\in\Omega:\delta(X) \geq 2^{-N+1}\ell(Q)\}$ and note that 
\[
\frac12 \ell(Q) \mu(Q) 
\lesssim_N
\iint_{E_{Q}} |\nabla \GG| \, dX
\lesssim
(2^{-N}\,\ell(Q))^{n+1}\|\nabla \GG\|_{L^\infty(E_Q)}
\lesssim \ell(Q)\sigma(Q)\|\nabla \GG\|_{L^\infty(E_Q)}.
\]
This gives that $|E_Q|>0$ and the second estimate in \eqref{eq:GG}. Moreover, for any $Y\in E_Q$ 
\[
 \delta(Y)\le |Y-x_Q|
\le
|Y-X_{k_Q}|+ |X_{k_Q}-x_Q|
<
2^{-N-1}\ell(Q)+ \frac54\,r(\widetilde{B}_Q)
<
\frac34\Xi r_Q+\frac54\,r(\widetilde{B}_Q)
=
2r(\widetilde{B}_Q).
\]
Consequently, $Y\in 2\widetilde{B}_Q\cap\Omega$ and $2^{-N+1}\ell(Q)\le \delta(Y)\lesssim \ell(Q)$. Note also that if $X\in E_Q$, then 
\[
|X-Y|<2^{-N}\ell(Q)\le \delta(Y)/2.
\]
As a result, 
\begin{multline}\label{AWfvawvqaevq}
	\ell(Q) \mu(Q) 
	\lesssim_N
	\iint_{E_Q} |\nabla \GG| \, dX
	\lesssim_N
	\ell(Q)^{\frac{n+1}{2}}\Big(\iint_{B(Y,\delta(Y)/2)} |\nabla \GG|^2 \, dX \Big)^\frac12
	\\
	\lesssim
	\ell(Q)^{\frac{n-1}{2}}\Big(\iint_{B(Y,3\delta(Y)/4)} |\GG|^2 \, dX \Big)^\frac12
	\approx
	\sigma(Q) \GG(Y),
\end{multline}
where we have used Caccioppoli's and Harnack's inequalities. This readily leads to the desired estimate completing the proof. 
\end{proof}

\begin{remark}\label{remark:gradient:Norefinment} 
We need to make the following observation which  will be used below to obtain Theorem \ref{thm:UR}. In the previous proof the fact that $(\B, \G, \bbF)$ is the $2^N$-refinement of $(\B_0, \G_0, \bbF_0) $ has been only used at the very beginning to make sure that $\ell(Q)\lesssim 2^{-N}\ell(Q_\S)$. This means that one can state a version of Lemma~\ref{lem:gradient} for the original corona decomposition $(\B_0, \G_0, \bbF_0)$ as follows: Fixed $\S\in\bbF_0$ and defining $\mu$ and $\GG$ as in \eqref{eq:wG-normalize}, there exists $N\gg 1$, depending on the allowable parameters and on the implicit constants in \eqref{corona-w:CKS-S}, such that if $Q \in \S\in\bbF$ with $\ell(Q)\le 2^{-N}\,\ell(\Top(S))$, then \eqref{eq:GG} holds. Details are left to the interested reader.
\end{remark}

We are now ready to continue with the proof. Apply Lemma~\ref{lem:gradient} with $N$ 
large enough. For $Q\in\S$, we have $|E_Q|>0$, thus there exists $Y_Q\in E_Q$. On account of the normalization introduced in \eqref{eq:wG-normalize}, we then obtain 
\begin{multline*}
\sup_{\substack{ X \in 2\widetilde{B}_{Q}\cap\Omega \\ \delta(X) \ge 2^{-N}\ell(Q)}}\frac{G_L(X_{\S}, X)}{\delta(X)} 
\ge
\frac{G_L(X_{\S}, Y_Q)}{\delta(Y_Q)} 
=
\frac1{\sigma(Q_\S)}\frac{\GG(Y_Q)}{\delta(Y_Q)} 
\\
\gtrsim
\frac1{\sigma(Q_\S)}\frac{\mu(Q)}{\sigma(Q)} 
=
\frac{\omega_L^{X_{\S}}(Q)}{\sigma(Q)}
\approx
\frac{\omega_L^{X_{\S}}(Q_\S)}{\sigma(Q_\S)},
\end{multline*}
We next establish the converse inequality. Fix $X \in 2\widetilde{B}_{Q}\cap\Omega$ with $\delta(X) \gtrsim \ell(Q)$. 
Note that by \eqref{eq:BQ} 
\[
\delta(X)
\le 
2\,\Xi\,r_Q
\le
2\,\Xi\,\ell(Q)
\lesssim
2^{-N}\,\Xi\,\ell(Q_\S) 
\approx
2^{-N}\,\Xi\,\delta(X_\S)
<
2^{-1}\,\delta(X_\S),
\]
provided $N$ is large enough, and 
\begin{equation*}
	2\delta(X)
	<
	\delta(X_\S)
	\le
	|X_\S-X|+\delta(X).
\end{equation*}
Hence, $|X_\S-X|>\delta(X)$. Invoking then \eqref{eq:CFMS:1}, 
\begin{equation}\label{eq:Gd}
	\frac{G_L(X_{\S}, X)}{\delta(X)}
	\lesssim \frac{\omega_L^{X_\S}(\Delta(\widehat{x}, 2\,\delta(X)))}{\delta(X)^n},
\end{equation}
where $\widehat{x} \in \pom$ is such that $|X-\widehat{x}| = \delta(X)$. 
For any $y \in \Delta(\widehat{x}, 2\,\delta(X))$ we observe that 
\begin{equation*}
	|y-x_Q| 
	\leq 
	|y-\widehat{x}| + |\widehat{x} - X| + |X-x_Q| 
	\le 
	3\delta(X) + |X-x_Q| 
	\leq 
	4|X-x_{Q}| 
	\le 
	8\Xi r_{Q},  
\end{equation*}
hence $\Delta(\widehat{x}, 2\,\delta(X))\subset 8\widetilde{\Delta}_{Q}$. Also, if $z \in 8\widetilde{\Delta}_{Q}$, recalling that we write $Q^{(N)}$ for the $N$-th dyadic ancestor of $Q$ (that is the unique dyadic cube containing $Q$ and with sidelength $2^{N}\ell(Q)$), we have
\begin{align*}
	|z-x_{Q^{(N)}}| 
	\leq 
	|z-x_{Q}| + |x_{Q}-x_{Q^{(N)}}| 
	\le 8\Xi r_{Q}+ \Xi\,r_{Q^{(N)}}
	< 2\,\Xi\,r_{Q^{(N)}},  
\end{align*}
provided $N \ge 3$, since $r_{Q}=2^{-N}r_{Q^{(N)}}$. Altogether, 
\begin{align}\label{eq:MDQ}
	\Delta(\widehat{x}, 2\delta(X)) \subset 8\widetilde{\Delta}_{Q} \subset 2\widetilde{\Delta}_{Q^{(N)}}. 
\end{align}
Note that $Q\in\S$ and $Q^{(N)}$ are $2^N$-close, hence $Q^{(N)}\in \S(N)$. Thus, we can invoke  \eqref{eq:Gd}, \eqref{eq:MDQ}, and \eqref{qfeawgvev} to conclude that
\begin{equation*}
	\frac{G_L(X_\S, X)}{\delta(X)}
	\lesssim 
	\frac{\omega_L^{X_\S}(2\widetilde{\Delta}_{Q^{(N)}})}{\delta(X)^n}
	\approx 
	\frac{\omega_L^{X_\S}(2\widetilde{\Delta}_{Q^{(N)}})}{\sigma(2\widetilde{\Delta}_{Q^{(N)}})}
	\lesssim 
	\frac{\omega_L^{X_\S}(Q_\S)}{\sigma(Q_\S)}.  
\end{equation*}
This completes the proof of the current implication.\qed

\subsection{Proof of \texorpdfstring{$\eqref{list:GL} \Longrightarrow \eqref{list:CME}$}{(c) implies (d)}: \texorpdfstring{$G_L$}{the Green function} is comparable to the distance to the boundary in the corona sense  implies that \texorpdfstring{$L$}{L} satisfies partial/weak Carleson measure estimates}\label{sec:G-CME}
Let $u \in L^{\infty}(\Omega)$ be a non-trivial weak solution of $Lu=-\div(A\nabla u)=0$ in $\Omega$. By homogeneity, we may assume that $\|u\|_{L^{\infty}(\Omega)}=1$. Fix also $\tau\in (0,\frac12)$, the parameter which appears in the partial/weak Carleson measure estimate (see Definition~\ref{def:Carleson}).  Assume that $(\B_0, \G_0, \bbF_0)$ is the assumed corona decomposition associated with $G_L$. Let $N\ge 0$ be large enough to be chosen momentarily (depending on $\tau$). We may proceed as in Remark~\ref{remark:corona-G:CFMS:N-close} and pick $(\B, \G, \bbF)$ the  $2^N$-refinement of $(\B_0, \G_0, \bbF_0)$. This implies that for every $\S\in\bbF$ there exist $Q_\S\in\D$ and $X_\S \in \Omega$ so that 
\begin{equation}\label{se5w4g4g}
\Top(\S)\subset \Top(\S)^{(N)}\subset Q_\S,
\qquad  
	\delta(X_{\S}) \ge 4\Xi \ell(Q_\S), \qquad \dist(X_{\S}, Q_\S) \lesssim \ell(Q_\S),
	\end{equation}
and
\begin{equation}\label{prwgbsbs}
	\sup_{\substack{X \in 2\,\widetilde{B}_Q\cap\Omega \\ \delta(X) \ge c_0\,\ell(Q)}} 
	\frac{G_L(X_{\S}, X)}{\delta(X)} \approx \frac{\omega_L^{X_{\S}}(Q_\S)}{\sigma(Q_\S)}=:\Lambda_\S, \quad\forall Q \in \S(N),
\end{equation}
for some $c_0\in (0,\frac12)$. Writing $G_{\S}:=G_L(X_{\S}, \cdot)$, by \eqref{prwgbsbs} and Harnack's inequality, for each $Q\in\S(N)$ there exists $P_Q \in  2\,\widetilde{B}_Q\cap\Omega$ with $\delta(P_Q) \ge c_0 \ell(Q)$ such that
\begin{align}\label{eq:Lambda-S}
\Lambda_{\S} \lesssim \frac{G_{\S}(P_Q)}{\delta(P_Q)} \approx_\tau \frac{G_{\S}(X)}{\delta(X)}, 
\quad\forall X \in V_Q := B(P_Q, (1-\tau)\delta(P_Q)). 
\end{align} 
Set 
\[
\alpha_Q := \iint_{V_Q} |\nabla u(X)|^2 \delta(X) \, dX, \quad\, Q \in \D, 
\]
and note that by Caccioppoli's inequality
\begin{multline}\label{qafvavwe}
	\alpha_Q
\lesssim
\delta(P_Q)\,  \iint_{V_Q} |\nabla u(X)|^2 dX
\lesssim_\tau
\delta(P_Q)^{-1} \iint_{B(P_Q, (1-\tau/2)\delta(P_Q))} |u(X)|^2 dX
\\
\lesssim
\delta(P_Q)^{n} 
\approx
\ell(Q)^n
\approx
\sigma(Q).
\end{multline}

We first claim that 
\begin{equation}\label{claim:CME}
\sup_{\S\in\bbF, Q\in\S} \frac1{\sigma(Q)}\sum_{Q' \in \S \cap \D_{Q}} \alpha_{Q'}
	\lesssim
	1.
\end{equation}
Assuming this momentarily let us continue as follows. Introduce some notation: if $Q_0\in\G$ we define $\S_{Q_0}$ to be the unique $\S\in\bbF$ so that $Q_0\in\S$, otherwise, if $Q_0\in\B$ then we set  $\S_{Q_0}=\emptyset$. We first see that 
\begin{equation}\label{xcndrnbr}
\D_{Q_0}\cap\G=\Big(\bigcup_{\S\in\bbF: \Top(\S)\subsetneq Q_0} \S\Big) \bigcup \big(\S_{Q_0}\cap \D_{Q_0}\big).
\end{equation}
Indeed, let $Q\in\D_{Q_0}\cap\G$, then $Q\in \S$ for some $\S\in\bbF$. In particular $Q\subset\Top(\S)\cap Q_0$. If $\Top(\S)\subsetneq Q_0$, then $Q$ in the first set of the right hand side of the last expression. Otherwise, if $Q_0\subset\Top(\S)$, then $\S\ni Q\subset Q_0\subset\Top(\S)$ and the semi-coherency of $\S$ implies that $Q_0\in\S\subset\G$, hence, necessarily $\S=\S_{Q_0}$ and $Q\in \S_{Q_0}\cap \D_{Q_0}$. Once \eqref{xcndrnbr} has been shown, we see that  \eqref{qafvavwe} and \eqref{claim:CME}  yield
\begin{align}\label{eq:bad-good}
	\sum_{Q \in \D_{Q_0}} \alpha_Q
	&=
	\sum_{Q \in \B \cap \D_{Q_0}} \alpha_Q 
	+
	\sum_{Q \in \G \cap \D_{Q_0}} \alpha_Q 
	\\
	&\le
	\sum_{Q \in \B \cap \D_{Q_0}} \alpha_Q 
	+
	\sum_{\S\in \bbF: \Top(\S)\subsetneq Q_0}
	\sum_{Q \in \S } \alpha_Q
	+
	\sum_{Q\in \S_{Q_0}\cap \D_{Q_0}} \alpha_Q
	\\
	&\lesssim
	\sum_{Q \in \B \cap \D_{Q_0}}\sigma(Q)
	+
	\sum_{\S\in \bbF: \Top(\S)\subsetneq Q_0} \sigma(\Top(\S))
	+
	\sigma(Q_0)
	\\
	&\le
	\sum_{Q \in \B \cap \D_{Q_0}} \sigma(Q)
	+
	\sum_{Q \in \Top(\bbF) \cap \D_{Q_0}} \sigma(Q)
	+
	\sigma(Q_0)
	\\
	&\lesssim
	\sigma(Q_0),
	\end{align}
where in the last estimate we have used that the families  $\B$ and $\Top(\bbF)$ satisfy a Carleson packing condition. Since $Q_0\in\D$ is arbitrary this allows us to obtain our desired estimate modulo our claim \eqref{claim:CME} which we prove next.

To see \eqref{claim:CME} we fix $\S\in\bbF$ and $Q_0\in\S$, in particular  $\S\cap\D_{Q_0}\neq\emptyset$. Set $\S':=\S\cap\D_{Q_0}$ which is clearly semi-coherent with $\Top(\S')=Q_0$. Introduce $\F$, the family of maximal dyadic cubes (hence, pairwise disjoint) in $\D_{Q_0}\setminus \S'$. We claim that $\D_{\F, Q_0}=\S'$. Indeed, if $Q\in \D_{\F, Q_0}\setminus \S'$ then $Q\subset Q'$ for some $Q'\in \F$, a contradiction.  On the other hand, if $Q\in\S'\setminus \D_{\F, Q_0}$, then $Q\subset Q'\subset Q_0$ for some $Q'\in\F$. Noting that $Q\in\S'$ with $\S'$ being semi-coherent, we conclude that $Q'\in\S'$ which leads again to a contradiction. To continue we  observe that for every $Q\in\S'\subset \S\subset\S(N)$ we have by \eqref{eq:Lambda-S}
\[
\delta(X)\lesssim_\tau \Lambda_\S^{-1} G_\S(X), \qquad\forall\,X\in V_Q.
\]
This and Lemma~\ref{lemma:VQ-overlap}  allow us to deduce that 
\begin{multline}\label{eq:QSQ}
\sum_{Q \in \S' } \alpha_Q 
=
\sum_{Q \in \D_{\F,Q_0}} \alpha_Q 
= 
\sum_{Q \in \D_{\F,Q_0}} \iint_{V_Q} |\nabla u|^2 \, \delta \, dX
\\
\lesssim 
\Lambda_{\S}^{-1} \sum_{Q \in \D_{\F, Q_0}} \iint_{V_Q} |\nabla u|^2 \, G_{\S} \, dX 
\lesssim 
\Lambda_{\S}^{-1} \iint_{\Omega_{\F, Q_0}^\vartheta} |\nabla u|^2 \, G_{\S} \, dX, 
\end{multline}
where $\vartheta$ depend son the allowable constants but it is independent of $N$.
Observe that by \eqref{eq:kappa} and \eqref{eq:BQ} one has 
\[
\Omega_{\F, Q_0}^\vartheta\subset T_{Q_0}^{\vartheta,**}\subset \frac12 B_{Q_0}^*\cap\Omega \subset \{X\in\Omega:\delta(X)\le \kappa_0\Xi \ell(Q_0)\}.
\]
On the other hand, from  \eqref{se5w4g4g}, and the fact that $Q_0\in \S$ we can readily see that 
\[
\delta(X_\S)\approx \ell(Q_S)\gtrsim 2^N \ell(Q_0)\gg \kappa_0\Xi \ell(Q_0),
\]
by choosing $N$ large enough (depending eventually on $\tau$). This means that $X_\S$ is far away from $B_{Q_0}^*$ and, in  what comes after we never get to worry about the position of the pole of the Green function, $X_\S$, since it is always far from where the integrations take place. We warn the reader that, henceforth, will make use of this observation repeatedly without explicitly mentioning it.  

To proceed, let $I\in \W_Q^\vartheta$ with $Q\in\D_{\F, Q_0}=\S\cap\D_{Q_0}$. Caccioppoli's and Harnack's inequalities yield
\begin{multline}\label{aeevave}
	\iint_{I^{**}}  \big(|\nabla G_{\S}|+ |\nabla u|\,G_{\S}\big)\, dX 
	\\
	\lesssim
	\ell(I)^{\frac{n-1}{2}} \Big(\iint_{I^{***}} |G_{\S}|^2\,dX\Big)^{\frac12} 
	+
	\ell(I)^{-1}\Big(\iint_{I^{***}} |u|^2\,dX\Big)^{\frac12}\Big(\iint_{I^{**}} |G_{\S}|^2\,dX\Big)^{\frac12}
	\\
	\lesssim
	\ell(I)^n\,G_\S(X(I)).
\end{multline}
Let $x_I\in\pom$ be such that $|X(I)-x_I|=\delta(X(I))$ and pick $Q_I\in\D$ satisfying $Q_I\ni x_I$ and $\delta(X(I))\le \ell(Q_I) < 2\delta(X(I))$. Note that \eqref{eq:Q-DQ}--\eqref{eq:BQ}  give
\[
|X(I)-x_{Q_I}|
\le
|X(I)-x_I|+|x_I-x_{Q_I}|
<
\delta(X(I))+\Xi\,r_{Q_I}
\le 
\ell(Q_I)+\Xi\,r_{Q_I}
\le 
2\Xi\,r_{Q_I},
\]
hence $X(I)\in 2\widetilde{B}_{Q_I}$. Besides,
\[
\ell(Q_I)\approx \delta(X(I))\approx \ell(I)\approx_\vartheta \ell(Q)
\]
and
\[
\dist(Q,Q_I)
\le
\dist(Q,I)+\diam(I)+|X(I)-x_I|
\lesssim_\vartheta 
\ell(Q)+\ell(Q_I).
\]
By choosing (and fixing) $N$ large enough (depending on $\vartheta$) we therefore obtain that $Q_I$ and $Q\in\S$ are $2^N$-close, hence $Q_I\in \S(N)$. Using that $c_0<\frac12$, we can next invoke \eqref{prwgbsbs} to obtain that
\begin{equation}\label{fgergerg}
\frac{G_\S(X(I))}{\delta(X(I))} 
\le
\sup_{\substack{X \in 2\,\widetilde{B}_{Q_I}\cap\Omega \\ \delta(X) \ge \frac12\,\ell(Q_I)}} 
\frac{G_L(X_{\S}, X)}{\delta(X)} 
\le
\sup_{\substack{X \in 2\,\widetilde{B}_{Q_I}\cap\Omega \\ \delta(X) \ge c_0\,\ell(Q_I)}} 
\frac{G_L(X_{\S}, X)}{\delta(X)} 
\approx
\Lambda_\S.
\end{equation}
This together with \eqref{aeevave} leads to 
\begin{equation}\label{agergaegvbe}
	\iint_{I^{**}}  \big(|\nabla G_{\S}|+ |\nabla u|\,G_{\S}\big)\, dX 
\lesssim
\Lambda_\S\,\ell(I)^{n+1},
\qquad\text{for every $I\in \W_Q^\vartheta$ with $Q\in\D_{\F, Q_0}$}.
\end{equation}

To continue, for every $M \geq 1$, we consider the pairwise disjoint collection $\F_M$  given by the family of maximal cubes of the collection $\F$ augmented by adding all the cubes $Q \in \D_{Q_0}$ such that $\ell(Q) \leq 2^{-M} \ell(Q_0)$. In particular, $Q \in \D_{\F_M, Q_0}$ if and only if $Q \in \D_{\F, Q_0}$ and $\ell(Q)>2^{-M}\ell(Q_0)$.  Moreover, $\D_{\F_M, Q_0} \subset \D_{\F_{M'}, Q_0}$ for all $M \leq M'$, and hence $\Omega_{\F_M, Q_0}^\vartheta \subset \Omega_{\F_{M'}, Q_0}^\vartheta \subset \Omega_{\F, Q_0}^\vartheta$.  Then the monotone convergence theorem implies
\begin{align}\label{eq:KN-lim}
\iint_{\Omega_{\F, Q_0}^\vartheta} |\nabla u|^2 \, G_{\S} \, dX
=\lim_{M \to \infty} \iint_{\Omega_{\F_M, Q_0}^\vartheta} |\nabla u|^2 \, G_{\S} \, dX 
=: \lim_{M \to \infty}	\mathcal{K}_M.
\end{align}
We claim that 
\begin{align}\label{eq:claim}
\mathcal{K}_M \lesssim \Lambda_{\S} \, \sigma(Q_0),  
\end{align}
where the implicit constant is independent of $M$, $\S$, and $Q_0$. Assuming that \eqref{eq:claim} holds momentarily, we obtain at once that \eqref{eq:QSQ}, \eqref{eq:KN-lim} and \eqref{eq:claim} give
\[
\sum_{Q \in \S' } \alpha_Q 
\lesssim
\sigma(Q_0).
\] 
This justifies as desired \eqref{claim:CME} and we eventually shown the desired partial CME estimates. 

The rest of this section is devoted to proving \eqref{eq:claim} and we borrow some ideas from \cite{CHMT}. Pick $\Psi_M$ from Lemma~\ref{lem:approx} and use  Leibniz's rule to arrive at 
\begin{multline}\label{eq:Leib}
A \nabla u \cdot \nabla u \  G_{\S} \Psi_M^2 
=A \nabla u \cdot \nabla (u G_{\S} \Psi_M^2) 
- \frac12 A \nabla (u^2 \Psi_M^2) \cdot \nabla G_{\S} 
\\ 	
+ \frac12 A \nabla(\Psi_M^2) \cdot \nabla G_{\S} \ u^2
-\frac12 A \nabla (u^2) \cdot \nabla(\Psi_M^2) G_{\S}. 
\end{multline}
Note that $u \in W^{1,2}_{\loc}(\Omega)\cap L^\infty(\Omega)$, $G_{\S} \in W^{1,2}_{\loc}(\Omega \setminus \{X_{\S}\})$, and that $\overline{\Omega_{\F_M, Q_0}^{\vartheta,**}}$ is a compact subset of $\Omega$ away from $X_{\S}$. Hence, $u \in W^{1,2}(\Omega_{\F_M, Q_0}^{\vartheta,**})$ and $u G_{\S} \Psi_M^2 \in W_0^{1,2}(\Omega_{\F_M, Q_0}^{\vartheta,**})$. These, together with the fact that $Lu=0$ in the weak sense in $\Omega$, lead to
\begin{align}\label{eq:AA-1}
\iint_{\Omega} A \nabla u \cdot \nabla(u G_{\S} \Psi_M^2) dX
=\iint_{\Omega_{\F_M, Q_0}^{\vartheta,**}} A \nabla u \cdot \nabla(u G_{\S} \Psi_M^2) dX=0.
\end{align}
On the other hand, $G_{\S} \in W^{1,2}(\Omega_{\F_M, Q_0}^{\vartheta,**})$ and $L^{\top} G_{\S}=0$ in the weak sense in $\Omega \setminus\{X_{\S}\}$. Thanks to the fact that $u^2 \Psi_M^2 \in W^{1,2}_0(\Omega_{\F_M, Q_0}^{**})$, we then obtain
\begin{align}\label{eq:AA-2}
\iint_{\Omega} A \nabla (u^2 \Psi_M^2) \cdot \nabla G_{\S} \, dX
=\iint_{\Omega_{\F_M, Q_0}^{\vartheta,**}} A^{\top} \nabla G_{\S} \cdot \nabla(u^2 \Psi_M^2)\, dX=0.
\end{align}
By Lemma~\ref{lem:approx}, the ellipticity of $A$, \eqref{eq:Leib}--\eqref{eq:AA-2}, the fact that $\|u\|_{L^{\infty}(\Omega)}=1$, and \eqref{agergaegvbe} we arrive at
\begin{multline}\label{eq:KN}
\mathcal{K}_M 
\lesssim \iint_{\Omega} A \nabla u \cdot \nabla u \, G_{\S} \Psi_M^2\, dX
\lesssim \iint_{\Omega} \big(|\nabla G_{\S}| +|\nabla u|\, \, G_{\S}\big) \,|\nabla\Psi_M|\,  dX
\\
\lesssim
\sum_{I \in \W_M^{\vartheta,\Sigma}} \ell(I)^{-1}\iint_{I^{**}}  \big(|\nabla G_{\S}|+ |\nabla u|\,G_{\S}\big)\, dX 
\lesssim
\Lambda_\S\sum_{I \in \W_M^{\vartheta,\Sigma}} \ell(I)^{n}.
\end{multline}
where we have used that if $I \in \W_M^{\vartheta,\Sigma}$ then $I\in \W_Q^\vartheta$ with $Q\in\D_{\F_M, Q_0}\subset \D_{\F, Q_0}$. We use again Lemma~\ref{lem:approx} to observe that 
\begin{equation}\label{34qt3tgg3}
	\sum_{I \in \W_M^{\vartheta,\Sigma}} \ell(I)^{n}
\approx
\sum_{I \in \W_M^{\vartheta,\Sigma}} 
\sigma(\widehat{Q}_I)
\lesssim \sigma\bigg(\bigcup_{I \in \W_M^{\vartheta,\Sigma}} \widehat{Q}_I \bigg)
\le \sigma(C \Delta_{Q_0}) 
\approx 
\sigma(Q_0), 
\end{equation}
where in the next-to-last inequality we have used that $\widehat{Q}_I \subset C\Delta_{Q_0}$ for every $I\in \W_M^{\vartheta,\Sigma}$. Indeed by  Lemma~\ref{lem:approx} we have that if $x\in\widehat{Q}_I$ where $I\in \W_M^{\vartheta,\Sigma}$ and we let $Q\in\D_{\F_M,Q_0}$ be so that $I\in\W_{Q}^\vartheta$, then
\begin{multline*}
	|x-x_{Q_0}|
\le
\diam(\widehat{Q}_I)+\dist(\widehat{Q}_I,I)+\diam(I)+\dist(I,Q)+ \diam(Q_0)
\\
\lesssim
\ell(I)+\ell(Q)+\ell(Q_0)
\approx
\ell(Q)+\ell(Q_0)
\lesssim
\ell(Q_0).
\end{multline*}
Collecting \eqref{eq:KN} and \eqref{34qt3tgg3} we conclude as desired \eqref{eq:claim}, and the proof is then complete.\qed

\subsection{Proof of \texorpdfstring{$\eqref{list:CME} \Longrightarrow \eqref{list:wL-strong}$}{(d) implies (a)}: \texorpdfstring{$L$}{L} satisfies partial/weak Carleson measure estimates implies that \texorpdfstring{$\omega_L$}{the elliptic measure} admits a strong corona decomposition}\label{sec:CME-wL}

We introduce some notation, given $N\ge 1$, for any $Q \in \D$, we let $Q(N) \in \D_Q$ be the unique dyadic cube such that $Q(N)\ni x_Q$ and $\ell(Q(N)) =2^{-N} \ell(Q)$.

We begin with some auxiliary result which will be iterated to construct the desired corona decomposition:

\begin{proposition}\label{prop:corona:basic-Q}
Let $0<\tau<\tau_0$ (cf.~Lemma~\ref{lemma:AGMT}) be the parameter implicitly assumed in \eqref{list:CME}. There exists $N_\tau$ (depending on $n$, ADR, ellipticity, and $\tau$) such that for every $Q\in\D$ and for each $N\gg N_\tau$, if we set $Y_Q=P_{Q(N_\tau)}$  (where $P_{Q(N_\tau)}$ is the point associated with $Q(N_\tau)$ in \eqref{list:CME}),  one can then find possibly empty families of pairwise disjoint cubes $\F_Q=\F_Q^+\sqcup \F_Q^-\subset\D_Q\setminus\{Q\}$,  a Borel function $f_Q$, and $u_Q\in W^{1,2}_{\loc} (\Omega)$ with $L u_Q=0$ in the weak sense in $\Omega$, such that the following hold: 
\begin{align}\label{eq:YQ-pos}
\delta(Y_Q)\approx\ell(Q)\approx\dist(Y_Q,Q);
\qquad \omega_L^{Y_Q}(Q)\approx 1;
\end{align}
\begin{align}\label{eq:NN}
2^{-N}\frac{\omega_L^{Y_Q}(Q)}{\sigma(Q)} 
\lesssim 
\frac{\omega_L^{Y_Q}(Q')}{\sigma(Q')} 
\le \bigg(\fint_{Q'} (\mathcal{M} \omega_L^{Y_Q})^{\frac12} \, d\sigma \bigg)^2 
\lesssim 
2^{2N} \frac{\omega_L^{Y_Q}(Q)}{\sigma(Q)}, 
\qquad \forall\,Q' \in \D_{\F_Q, Q}.
\end{align}
Moreover, if we set $F_Q^{\pm}:=\bigcup_{Q' \in \F_Q^\pm} Q'$, 
then
\begin{align}\label{eq:FQ2}
\sigma(F_Q^+) \le 
2^{-N}\,\sigma(Q);
\end{align}
\begin{align}\label{eq:uQfQ}
	0\le f_Q\lesssim  \mathbf{1}_{Q\setminus F_Q^-}, \qquad
	u_Q(X):=\int_{Q\setminus F_Q^-} f_Q\,d\omega_L^X, \quad X\in\Omega, 
\end{align}
and
\begin{align}\label{eq:uQ-lower} 
	\sigma(Q)\lesssim \iint_{B(Y_Q, (1-\tau)\delta(Y_Q))} |\nabla u_Q(X)|^2\,\delta(X).
\end{align}
In the previous estimates the implicit constants depend on $n$, ADR, ellipticity, and $\tau$, but they do not depend on $N$.
\end{proposition}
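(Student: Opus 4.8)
The plan is to combine the $A_\infty$-type stopping time construction underlying classical corona decompositions (as in \cite{HM,HMU,CHMT}) with the single-solution lower bound provided by Lemma~\ref{lemma:AGMT}. Fix $\tau\in(0,\tau_0)$ and let $\epsilon_\tau$ be as in Lemma~\ref{lemma:AGMT}. First I would fix $N_\tau$ large (depending on $n$, ADR, ellipticity, $\tau$) so that $Y_Q:=P_{Q(N_\tau)}$, which satisfies $\delta(Y_Q)\approx\ell(Q(N_\tau))\approx\ell(Q)$ and $\dist(Y_Q,Q(N_\tau))\approx\ell(Q)$ by the defining property of the points $P_{\cdot}$ in Definition~\ref{def:Carleson}, actually lies in $\tfrac12 B_Q\cap\Omega$ with $\delta(Y_Q)\approx 2^{-N_\tau}\ell(Q)=:\epsilon\,\ell(Q)$ where $\epsilon\le\epsilon_\tau$; shrinking $\epsilon$ further if needed is harmless. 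Bourgain's estimate \eqref{Bourgain} (Lemma~\ref{lem:Bourgain-CFMS}(a)) then gives $\omega_L^{Y_Q}(Q)\gtrsim 1$ since $Y_Q\in B(x_Q,C^{-1}\ell(Q))$ after adjusting constants, and $\omega_L^{Y_Q}(Q)\le 1$ trivially, so \eqref{eq:YQ-pos} holds. (This is also where Remark~\ref{remark:Bourgain:1} is invoked to normalise the $P_Q$'s so that $\omega_L^{P_Q}(Q)\gtrsim 1$; I would state that reduction up front.)

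Next I would run a stopping-time argument inside $Q$ for the density $k_Q:=\tfrac{\omega_L^{Y_Q}(\cdot)}{\sigma(\cdot)}$ with pole $Y_Q$. Starting from $Q$, subdivide dyadically and stop at a cube $Q'$ the first time either its density ratio $\omega_L^{Y_Q}(Q')/\sigma(Q')$ exceeds $2^{N}\,\omega_L^{Y_Q}(Q)/\sigma(Q)$ (these stopping cubes form $\F_Q^+$), or drops below $2^{-N}\,\omega_L^{Y_Q}(Q)/\sigma(Q)$ (these form $\F_Q^-$); let $\F_Q=\F_Q^+\sqcup\F_Q^-$ and $\D_{\F_Q,Q}$ be the associated sawtooth. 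By construction, for $Q'\in\D_{\F_Q,Q}$ the density ratio stays between $2^{-N}$ and $2^{N}$ times $\omega_L^{Y_Q}(Q)/\sigma(Q)$, which gives the outer inequalities in \eqref{eq:NN}; the middle inequality $\omega_L^{Y_Q}(Q')/\sigma(Q')\le\big(\fint_{Q'}(\mathcal M\omega_L^{Y_Q})^{1/2}d\sigma\big)^2$ is just Jensen (the average of the square root is at most the square root of the average, and $\mathcal M\omega_L^{Y_Q}\ge \omega_L^{Y_Q}(Q')/\sigma(Q')$ on $Q'$), while the rightmost bound $\big(\fint_{Q'}(\mathcal M\omega_L^{Y_Q})^{1/2}d\sigma\big)^2\lesssim 2^{2N}\omega_L^{Y_Q}(Q)/\sigma(Q)$ follows from the weak-$(1,1)$ bound for $\mathcal M$ applied to the measure $\omega_L^{Y_Q}$ restricted to a dilate of the parent of $Q'$, using that non-stopped ancestors have controlled density and a standard tail summation — this is the one genuinely computational point. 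For \eqref{eq:FQ2}, the maximal cubes in $\F_Q^+$ are pairwise disjoint with $\omega_L^{Y_Q}(Q')\ge 2^{N}\tfrac{\omega_L^{Y_Q}(Q)}{\sigma(Q)}\sigma(Q')$, so summing and using $\sum\omega_L^{Y_Q}(Q')\le\omega_L^{Y_Q}(Q)$ yields $\sigma(F_Q^+)\le 2^{-N}\sigma(Q)$.

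It remains to produce $f_Q$ and $u_Q$ with \eqref{eq:uQfQ}--\eqref{eq:uQ-lower}. Here I set $E_Q:=Q\setminus F_Q^-$ and observe $\omega_L^{Y_Q}(F_Q^-)=\sum_{Q'\in\F_Q^-}\omega_L^{Y_Q}(Q')\le 2^{-N}\tfrac{\omega_L^{Y_Q}(Q)}{\sigma(Q)}\sum\sigma(Q')\le 2^{-N}\omega_L^{Y_Q}(Q)$, so that $\omega_L^{Y_Q}(E_Q)\ge(1-2^{-N})\omega_L^{Y_Q}(Q)$; choosing $N$ large enough that $2^{-N}\le\epsilon_\tau$ lets me apply Lemma~\ref{lemma:AGMT} with this $E_Q$ and $Y_Q$ in place of $Y_Q$ there (the hypotheses $Y_Q\in\tfrac12 B_Q\cap\Omega$ and $\delta(Y_Q)\approx\epsilon\ell(Q)$ hold by the first paragraph), producing exactly a Borel $f_Q$ with $0\le f_Q\lesssim\mathbf 1_{E_Q}=\mathbf 1_{Q\setminus F_Q^-}$, the solution $u_Q(X)=\int_{Q\setminus F_Q^-}f_Q\,d\omega_L^X$, and the lower bound $\iint_{B(Y_Q,(1-\tau)\delta(Y_Q))}|\nabla u_Q|^2\delta\gtrsim_{\tau}\sigma(Q)$, i.e.\ \eqref{eq:uQ-lower}. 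The main obstacle I anticipate is the bookkeeping in the second paragraph: matching the normalisation of $Y_Q$ (pole height $\approx\epsilon\ell(Q)$ with $\epsilon$ small but comparable to $2^{-N_\tau}$, \emph{not} to $2^{-N}$) against the much larger stopping parameter $N$, and verifying that $\mathcal M\omega_L^{Y_Q}$ on cubes of $\D_{\F_Q,Q}$ is controlled by $2^{2N}\tfrac{\omega_L^{Y_Q}(Q)}{\sigma(Q)}$ — one must be careful that the maximal function over \emph{all} dyadic ancestors (including those outside $Q$) does not see the possibly large density of $\omega_L^{Y_Q}$ near $Y_Q$ itself, which is why it is essential that $\delta(Y_Q)\approx\ell(Q)$ so $Y_Q$ is at unit scale away from $Q$ and the relevant ancestors of $Q'\subset Q$ at scales $\le\ell(Q)$ have density $\lesssim 2^{N}\tfrac{\omega_L^{Y_Q}(Q)}{\sigma(Q)}$ by the stopping rule. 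Everything else is routine given the cited lemmas.
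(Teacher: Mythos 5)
Your choice of $Y_Q=P_{Q(N_\tau)}$, the use of Bourgain to get $\omega_L^{Y_Q}(Q)\approx 1$, and the reduction to Lemma~\ref{lemma:AGMT} via the mass bound on $Q\setminus F_Q^-$ are all essentially the paper's argument and are fine. The genuine gap is in your stopping rule and its claimed consequence for the middle/right part of \eqref{eq:NN}.

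You stop when the density ratio $\omega_L^{Y_Q}(Q')/\sigma(Q')$ leaves the window $[2^{-N},2^{N}]\,\omega_L^{Y_Q}(Q)/\sigma(Q)$. This gives the \emph{left} inequality in \eqref{eq:NN}, the middle one is Jensen, and \eqref{eq:FQ2} also works by your summation argument. But your stopping rule does \emph{not} control $\big(\fint_{Q'}(\mathcal M\omega_L^{Y_Q})^{1/2}d\sigma\big)^2$ for non-stopped $Q'$, and the weak-$(1,1)$/tail-summation fix you sketch does not close this. The obstruction: for $y\in Q'$, $\mathcal M\omega_L^{Y_Q}(y)$ is a supremum over balls \emph{centered at $y$}, which for small radii sit inside stopped cubes of $\F_Q^+$ (or their descendants), or at moderate radii spill into \emph{sibling} cubes of $Q'$ that are not ancestors of $Q'$ and need not even share a comparable-scale dyadic ancestor with $Q'$ (Christ cubes are not aligned with balls). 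None of these cubes' densities are bounded by the stopping rule, which only constrains cubes in the sawtooth $\D_{\F_Q,Q}$, i.e.\ ancestors of $Q'$ within $Q$. If you split $\mathcal M\omega=\mathcal M(\omega\mathbf 1_{\kappa\Delta_{Q'}})+\mathcal M(\omega\mathbf 1_{\pom\setminus\kappa\Delta_{Q'}})$ and apply Kolmogorov to the local piece, you only control $\omega(\kappa\Delta_{Q'})$ by the global mass $\omega(\pom)\lesssim\sigma(Q)$, giving $\big(\fint_{Q'}(\mathcal M\omega)^{1/2}\big)^2\lesssim\sigma(Q)/\sigma(Q')$, which blows up as $\ell(Q')\to 0$ and is certainly not $\lesssim 2^{2N}$.

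The paper's proof sidesteps this entirely: having set $\w:=\sigma(Q)\omega_L^{Y_Q}$, it first records via Hardy--Littlewood plus Kolmogorov the global bound $\big(\fint_Q(\mathcal M\w)^{1/2}d\sigma\big)^2\lesssim 1$ (possible because $\w(\pom)\le\sigma(Q)$), and then \emph{stops directly on this functional}: a cube $Q'$ is put in $\F_Q^+$ when $\big(\fint_{Q'}(\mathcal M\w)^{1/2}d\sigma\big)^2>2^{2N}$, and in $\F_Q^-$ when $\w(Q')/\sigma(Q')<2^{-N}$. That makes the right-hand bound in \eqref{eq:NN} an immediate consequence of being unstopped, and the packing $\sigma(F_Q^+)\lesssim 2^{-N}\sigma(Q)$ follows from the additivity of $\int_{Q'}(\mathcal M\w)^{1/2}d\sigma$ over disjoint $Q'$ together with the global Kolmogorov bound. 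If you keep your density-only stopping rule you have no way to certify the maximal-average bound for unstopped cubes; you should adopt the stopping condition on $\fint(\mathcal M\w)^{1/2}$ for $\F_Q^+$, after which the rest of your argument (including your derivations of \eqref{eq:FQ2} and the application of Lemma~\ref{lemma:AGMT}) goes through with only notational changes.

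Two minor points: the middle inequality $\omega(Q')/\sigma(Q')\le\big(\fint_{Q'}(\mathcal M\omega)^{1/2}\big)^2$ holds with $\le$ for the dyadic maximal function but only up to a constant for the Hardy--Littlewood one, which is why the paper's stopping threshold is $2^{2N}$ rather than $2^N$; and the applicability condition for Lemma~\ref{lemma:AGMT} is that the slack $\omega_L^{Y_Q}(Q\setminus E_Q)/\omega_L^{Y_Q}(Q)$ be $\le\epsilon=2^{-N_\tau}$ (the parameter tied to $\delta(Y_Q)\approx\epsilon\ell(Q)$), not merely $\le\epsilon_\tau$, so you need $2^{-N}\lesssim 2^{-N_\tau}$, which $N\gg N_\tau$ indeed gives.
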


\begin{proof}

Write $C_0>1$ for the constant in Bourgain's estimate (see \eqref{Bourgain}). Let  $N_\tau$ be large enough to be chosen momentarily so that $2^{N_\tau} \gg C_0$.  Let $Y_Q:=P_{Q(N_\tau)}$ be the point associated with $Q(N_\tau)$ in \eqref{list:CME}. Observe that since $x_Q\in Q(N_\tau)$ we have
\begin{equation}\label{afawefwf}
	|Y_Q-x_Q|
\le
\dist(Y_Q, Q(N_\tau))+ \diam(Q(N_\tau))
\approx
\ell(Q(N_\tau))
=
2^{-N_\tau}\ell(Q)
\ll \ell(Q).
\end{equation}
Hence, by taking $N_\tau$ large enough, Bourgain's estimate (see Lemma~\ref{lem:Bourgain-CFMS}) and \eqref{eq:Q-DQ} imply
\begin{align}\label{eq:wpQQ}
C_0^{-1}\le \omega_L^{Y_Q}(Q) \le 1.
\end{align}  
Note that by \eqref{afawefwf} we have $Y_Q\in\frac12 B_Q\cap\Omega$ (cf.~\eqref{eq:BQ}) and also $\delta(Y_Q)=\delta(P_{Q(N_\tau)}) \approx \ell(Q(N_\tau))=2^{-N_\tau}\,\ell(Q)$. Hence, in the context of Lemma~\ref{lemma:AGMT}, applied to $\epsilon=2^{-N_\tau}\ll  \epsilon_\tau$ so that we have that  $\delta(Y_Q)\approx \epsilon\,\ell(Q)$.

Set $\w:=\sigma(Q) \omega_L^{Y_{Q}}$ and note that 
\begin{align}\label{eq:wseq}
C_0^{-1} \le \frac{\w(Q)}{\sigma(Q)} \le 
\frac{\w(\pom)}{\sigma(Q)} 
\le 1.
\end{align}
Write $\mathcal{M}$ for the Hardy-Littlewood maximal operator on $\pom$ (with respect to $\sigma$) and observe that by the Hardy-Littlewood theorem and \eqref{eq:wseq} one has
\[
 \|\mathcal{M} \w\|_{L^{1,\infty}(\pom, \sigma)} 
 \lesssim
 \omega(\pom)
 \lesssim \sigma(Q).
\]
This and Kolmogorov's inequality imply
\begin{align}\label{AWfawcvarc}
	\bigg(\fint_{Q} (\mathcal{M} \w)^{\frac12} \, d\sigma \bigg)^2 
\lesssim
\sigma(Q)^{-1}\, \|\mathcal{M} \w\|_{L^{1,\infty}(\pom, \sigma)} 
\lesssim 1. 
\end{align}
Let $N\gg N_\tau$ be large enough to be chosen. Subdivide dyadically $Q$ and stop the first time that one of the following two conditions occur
\begin{align}\label{eq:stopping}
	\frac{\w(Q')}{\sigma(Q')} < 2^{-N}, \qquad 
	\bigg(\fint_{Q'} (\mathcal{M} \w)^{\frac12} \, d\sigma \bigg)^2 >2^{2N}. 
\end{align}
This stopping time family is denoted by $\F_Q$ and it is pairwise disjoint. Assuming that $N$ is sufficiently large, \eqref{eq:wseq} and \eqref{AWfawcvarc} clearly give that $\F_Q\subset\D_Q\setminus \{Q\}$. Let  $\F_Q^-$ be the subcollection of cubes in $\F_Q$ satisfying the first condition in \eqref{eq:stopping} and set $\F_Q^+=\F_Q\setminus\F_Q^{-}$, that is, the cubes that satisfy the second condition in \eqref{eq:stopping}, but not the first one. By construction $\F_Q:=\F_Q^+ \sqcup \F_Q^-$ and 
\begin{align*}
	2^{-N} \le \frac{\w(Q')}{\sigma(Q')} 
	\le \bigg(\fint_{Q'} (\mathcal{M} \w)^{\frac12} \, d\sigma \bigg)^2 
	\leq 2^{2N}, \qquad \forall\,Q' \in \D_{\F_Q, Q}. 
\end{align*}
This readily implies \eqref{eq:NN} using \eqref{eq:wpQQ} and  $\w:=\sigma(Q) \omega_L^{Y_{Q}}$. 
If $\F_Q^+\neq\emptyset$ we use that the cubes in $\F_Q^+$ are pairwise disjoint and satisfy the second condition in \eqref{eq:stopping}, \eqref{AWfawcvarc}, and \eqref{eq:wseq}, to arrive at \eqref{eq:FQ2}:
\begin{multline*}
\sigma(F_Q^+)=\sigma \bigg(\bigcup_{Q' \in \F_Q^+} Q' \bigg)
= \sum_{Q' \in \F_Q^+} \sigma(Q') 
\le 2^{-N} \sum_{Q' \in \F_Q^+} \int_{Q'} (\mathcal{M} \w)^{\frac12} \, d\sigma
\\
\le 2^{-N} \int_{Q} (\mathcal{M} \w)^{\frac12} \, d\sigma
\lesssim 
2^{-N}  \sigma(Q), 
\end{multline*}
provided $N$ is sufficiently large. This estimate clearly holds if  $\F_Q^+=\emptyset$. Hence we have shown \eqref{eq:FQ2}.

To continue if $\F_Q^-\neq\emptyset$, use that the cubes in $\F_Q^-$ are pairwise disjoint and  satisfy the first  condition in \eqref{eq:stopping}, and \eqref{eq:wseq}, to see that 
\begin{align*}
\w(F_Q^-)
=
\w \bigg(\bigcup_{Q' \in \F_Q^-} Q' \bigg)
= \sum_{Q' \in \F_Q^-} \w(Q') 
\le 
2^{-N} \sum_{Q' \in \F_Q^-} \sigma(Q') 
\le
2^{-N} \sigma(Q)
\le C_0 2^{-N} \w(Q).
\end{align*}
Again this estimate clearly holds in the case  $\F_Q^-=\emptyset$. Thus, in either scenario,
\[
\omega_L^{Y_Q}(Q\setminus F_Q^-) \ge (1-C_0\,2^{-N}) \omega_L^{Y_Q}(Q). 
\]
As observed above, $Y_Q\in\frac12 B_Q\cap\Omega$ and $\delta(Y_Q)\approx \epsilon\,\ell(Q)$. In view of Lemma~\ref{lemma:AGMT}, if one further assumes that $N$ is large enough so that $2^{-N}C_0< \epsilon$, we can find a Borel function $f_Q$ and $u_Q\in W^{1,2}_{\loc}(\Omega)$, with $Lu_Q=0$ in the weak sense in $\Omega$, such that \eqref{eq:uQfQ} and \eqref{eq:uQ-lower} holds where the implicit constants depend on $n$, ADR, ellipticity, and $\tau$, but they are independent of $N$.  
This completes the proof. 
\end{proof}

Having Proposition~\ref{prop:corona:basic-Q} at our disposal, we are going to iterate that construction to obtain the desired corona decomposition. With this goal in mind we fix an arbitrary $Q^0 \in \D$. Set $\F_0:=\{Q^0\}$, $\F_0^+:=\{Q^0\}$, and $\F_0^-:=\emptyset$. Let $\F_1 := \bigsqcup_{Q \in \F_0} \F_Q$ and $\F_1^{\pm} := \bigsqcup_{Q \in \F_0} \F^{\pm}_Q$ denote the first generation cubes. In general, we may define recursively 
\begin{align*}
\F_{k+1} := \bigsqcup_{Q \in \F_k} \F_Q \quad\text{ and }\quad 
\F_{k+1}^{\pm} := \bigsqcup_{Q \in \F_k} \F_Q^{\pm}, \qquad k \ge 0. 
\end{align*}
We also set 
\begin{equation*}
	\F := \bigsqcup_{k=0}^{\infty} \F_k 
	\quad\text{ and }\quad 
	\F^{\pm} := \bigsqcup_{k=0}^{\infty} \F_k^{\pm}. 
\end{equation*} 
With all these we construct a semi-coherent corona decomposition relative to $Q_0$ as follows. Observe first that 
\[
\D_{Q^0} 
= 
\D_{\F_{Q^0}, Q^0} \bigsqcup \bigg(\bigsqcup_{Q \in \F_{Q^0}} \D_Q\bigg)
=
\Big(\bigsqcup_{Q \in \F_0}\D_{\F_Q, Q}\Big) \bigsqcup \bigg(\bigsqcup_{Q \in \F_1} \D_Q\bigg)
.
\]
In turn, 
\[
\bigsqcup_{Q \in \F_1}\D_Q
= 
\bigsqcup_{Q \in \F_1}\bigg(\D_{\F_Q, Q} \bigsqcup \Big(\bigsqcup_{Q' \in \F_Q} \D_{Q'}\Big)\bigg)
=
\Big(\bigsqcup_{Q \in \F_1}\D_{\F_Q, Q}\Big) \bigsqcup \Big(\bigsqcup_{Q' \in \F_2} \D_{Q'}\Big).
\]
Iterating this procedure we eventually obtain 
\begin{align}\label{eq:SG}
	\D_{Q^0} = \bigsqcup_{k=0}^{\infty} \bigg(\bigsqcup_{Q \in \F_k} \D_{\F_Q, Q} \bigg) 
	= \bigsqcup_{Q \in \F} \D_{\F_Q, Q}.
\end{align}
We then set $\B_{Q^0}:=\emptyset$, $\bbF_{Q^0}=\{\D_{\F_Q, Q}\}_{Q\in\F}$, and $\G_{Q^0}=\bigsqcup_{\S\in\bbF_{Q^0}}\S$. Note that trees are of the form $\D_{\F_Q, Q}$ with $Q\in\F$, hence they are clearly semi-coherent with $\Top(\S)=Q$, hence $\Top(\bbF_{Q^0})=\F$.  Thus, in order to see that $(\B_{Q^0},\G_{Q^0},\bbF_{Q^0})$ is a semi-coherent corona decomposition relative to $Q^0$ we are left with showing that the family $\Top(\bbF_{Q_0})=\F$ satisfy a packing condition. This is the content of the following result:

\begin{proposition}\label{prop:packing-corona-w}
Under the previous considerations, and taking $N$ large enough  (depending on $n$, ADR, ellipticity, and $\tau$)  the family $\Top(\bbF_{Q_0})=\F$ satisfies the Carleson condition
\begin{align}\label{erwfggweg}
	\sup_{Q \in \D_{Q^0}}	\frac1{ \sigma(Q)}\sum_{Q'\in \F\cap\D_{Q}} \sigma(Q') 
\lesssim 1,
\end{align}
with an implicit constant which depends on $n$, ADR, ellipticity, and $\tau$.  As  a consequence, $(\B_{Q^0},\G_{Q^0},\bbF_{Q^0})$ is a semi-coherent corona decomposition relative to $Q^0$.
\end{proposition}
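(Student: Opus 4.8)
The goal is the packing estimate \eqref{erwfggweg} for the family $\F=\bigsqcup_k \F_k$ of stopping-time cubes produced by iterating Proposition~\ref{prop:corona:basic-Q}. The natural strategy is a two-part decomposition: split $\F=\F^+\sqcup\F^-$ and prove the packing bound for each part separately, using \eqref{eq:FQ2} for the $\F^+$-cubes (a direct geometric-series argument) and the partial/weak Carleson measure estimate \eqref{list:CME} applied to the solutions $u_Q$ for the $\F^-$-cubes.

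First I would handle $\F^+$. For a fixed $Q\in\D_{Q^0}$ and each generation $k$, the cubes in $\F_k^+\cap\D_Q$ that are descendants of some fixed $R\in\F_{k-1}$ satisfy, by \eqref{eq:FQ2}, $\sigma\big(\bigcup\{Q'\in\F_R^+\}\big)=\sigma(F_R^+)\le 2^{-N}\sigma(R)$. Summing over the (pairwise disjoint) cubes $R\in\F_{k-1}$ contained in $Q$ and iterating down the generations, one gets $\sum_{Q'\in\F_k^+\cap\D_Q}\sigma(Q')\le 2^{-kN}\sigma(Q)$ roughly; but one must be careful because $\F_k$ contains both $\F_k^+$ and $\F_k^-$ cubes, and an $\F_k^+$ cube can descend from an $\F_{k-1}^-$ cube. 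The cleaner bookkeeping is: for any $R\in\F_{k-1}$, $\sigma\big(\bigsqcup_{Q'\in\F_R^+}Q'\big)\le 2^{-N}\sigma(R)$ and trivially $\sigma\big(\bigsqcup_{Q'\in\F_R}Q'\big)\le\sigma(R)$, so $\sum_{Q'\in\F_k\cap\D_Q}\sigma(Q')\le\sigma(Q)$ for every $k$ — this alone does not sum. So the $\F^+$ part genuinely needs the $2^{-N}$ gain, and the right statement is that the number of ``$+$-ancestors'' of a given cube controls the decay: each time we pass a $\F^+$ generation we lose a factor $2^{-N}$ in measure, which after summing a geometric series in the number of $+$-ancestors gives $\sum_{Q'\in\F^+\cap\D_Q}\sigma(Q')\lesssim\sigma(Q)$ once $N$ is large. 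I would organize this by letting $\Sigma_j(Q)$ denote the total $\sigma$-measure of cubes in $\F\cap\D_Q$ having exactly $j$ ancestors in $\F^+$ strictly between them and $Q$, show $\Sigma_{j}(Q)\le 2^{-N}\Sigma_{j-1}(Q)$-type recursion is not quite literally true, so instead argue $\sum_{Q'\in\F^+,\, Q'\subsetneq Q}\sigma(Q')\le\sum_{j\ge1}2^{-jN}\sigma(Q)\cdot C^j$ by tracking that between consecutive $+$-ancestors the measure can only be redistributed among $\F$-cubes, not increased.

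Second, and this is where the Carleson hypothesis enters, I would pack $\F^-$. For each $Q'\in\F^-$ with parent cube $Q\in\F$ (so $Q'\in\F_Q^-$), recall $Q'$ satisfies the first stopping condition $\w(Q')/\sigma(Q')<2^{-N}$ and that the solution $u_Q$ built in Proposition~\ref{prop:corona:basic-Q} satisfies the lower bound \eqref{eq:uQ-lower}: $\sigma(Q)\lesssim\iint_{B(Y_Q,(1-\tau)\delta(Y_Q))}|\nabla u_Q|^2\delta\,dX$, with $Y_Q=P_{Q(N_\tau)}$. The idea is: for a fixed $Q_0^*\in\D_{Q^0}$, sum $\sigma(Q)$ over all $Q\in\F\cap\D_{Q_0^*}$ such that $\F_Q^-\neq\emptyset$ (these are exactly the $Q$ whose children contribute to $\F^-$), bound each $\sigma(Q)$ by the Carleson box integral of $|\nabla u_Q|^2\delta$ over $B(P_{Q(N_\tau)},(1-\tau)\delta(P_{Q(N_\tau)}))$, and then — crucially — note that as $Q$ ranges over $\F\cap\D_{Q_0^*}$, the cubes $Q(N_\tau)$ are distinct enough that the balls $B(P_{Q(N_\tau)},(1-\tau)\delta(P_{Q(N_\tau)}))$ have bounded overlap (this uses $\delta(P_{Q(N_\tau)})\approx\ell(Q(N_\tau))=2^{-N_\tau}\ell(Q)$ together with disjointness of the $Q$'s from the stopping-time construction, or Lemma~\ref{lemma:VQ-overlap}). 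If all the $u_Q$ were a single function, we would be done by \eqref{list:CME}; since they are different, the remaining work is to produce, à la \cite{HLMN}, a single bounded solution (or finitely many) whose gradient dominates $\sum_Q|\nabla u_Q|^2\mathbf 1_{V_{Q(N_\tau)}}$ on the relevant set — or, more simply, to observe that each $u_Q=\int_{Q\setminus F_Q^-}f_Q\,d\omega_L^{(\cdot)}$ with $0\le f_Q\lesssim\mathbf1_{Q\setminus F_Q^-}$ and the $Q\setminus F_Q^-$ are "essentially disjoint as $\w$-bulk", so that $u:=\sum_Q c_Q u_Q$ with a suitable choice of signs/normalization is a bounded solution with $\|u\|_\infty\lesssim1$ to which \eqref{list:CME} applies, yielding $\sum_{Q\in\F\cap\D_{Q_0^*}:\F_Q^-\ne\emptyset}\sigma(Q)\lesssim\|u\|_\infty^2\sigma(Q_0^*)\lesssim\sigma(Q_0^*)$. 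Combined with $\sigma(F_Q^-)\le\sigma(Q)$ and the fact that the remaining $\F^-$-mass is controlled by the $\F^+$-count (a cube in $\F^-$ is either in some $\F_Q^-$ with $Q\in\F$, handled above, summed over the tree), this closes the estimate.

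**Main obstacle.** The hard part will be the $\F^-$ packing: assembling the individual solutions $u_Q$ into a single bounded solution to which the partial/weak Carleson estimate can be applied, while keeping the $L^\infty$ norm uniformly bounded and ensuring the gradient-square integrals add up with bounded overlap. This is exactly the delicate point in \cite{HLMN}, and the technical heart is choosing the coefficients $c_Q$ (or a randomized/telescoping sum) so that $\big\|\sum_Q c_Q u_Q\big\|_{L^\infty(\Omega)}\lesssim 1$ despite infinitely many summands — this should follow from the $\w$-measure bound $\w(F_Q^-)\le C_0 2^{-N}\w(Q)$ and the maximum principle, organized generation by generation, so that the geometric decay in $N$ beats the overlap in the tree. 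The $\F^+$ part is comparatively routine once the bookkeeping of $+$-ancestors is set up, and the bounded-overlap claim for the balls $V_{Q(N_\tau)}$ is a direct consequence of Lemma~\ref{lemma:VQ-overlap}.
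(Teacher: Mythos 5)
The high-level ingredients in your outline — the $\F^+$ smallness \eqref{eq:FQ2}, the lower bound \eqref{eq:uQ-lower} for the solutions $u_Q$, the partial/weak CME hypothesis, and a randomized/Rademacher summation à la \cite{HLMN} — are indeed the right ones, and the proof in the paper does use all of them. But the organization you propose, splitting $\F=\F^+\sqcup\F^-$ and packing the two halves separately, does not match where these tools actually apply, and as sketched the argument is circular.

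The first problem is the claim that the $\F^+$ packing is ``comparatively routine.'' For a fixed $Q$ you would write
\[
\sum_{Q'\in\F^+\cap\D_Q}\sigma(Q')\le\sum_{R\in\F\cap\D_Q}\sigma(F_R^+)\le 2^{-N}\sum_{R\in\F\cap\D_Q}\sigma(R),
\]
and the last sum is precisely the quantity you are trying to bound; \eqref{eq:FQ2} gives you a factor $2^{-N}$, but you still need a bound on the total $\F$-mass between two consecutive $\F^+$ stoppings, and that bound is exactly what requires the CME hypothesis. Your per-generation observation $\sum_{Q'\in\F_k^+}\sigma(Q')\le 2^{-N}\sigma(Q_0)$ is correct, but summing this over $k$ diverges; the geometric decay you want is not between Proposition~\ref{prop:corona:basic-Q}'s generations $\F_k$ but between \emph{consecutive $\F^+$ stoppings}, and between two such stoppings there can sit infinitely many $\F^-$ cubes whose measure must be controlled first. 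The second problem is on the $\F^-$ side: what you propose bounding there, $\sum_{Q\in\F:\F_Q^-\neq\emptyset}\sigma(Q)$, is essentially the whole sum you are after, so framing it as ``the $\F^-$ part'' hides the circularity rather than resolving it.

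The right organizing device is a secondary stopping time by $\F^+$: for $Q\in\F^+$, let $\Scal_Q$ be the maximal $\F^+$-cubes strictly inside $Q$, and iterate to get a tree decomposition $\D_{Q^0}=\bigsqcup_{Q\in\Scal}\D_{\Scal_Q,Q}$ with $\Scal=\F^+\cap\D_{Q^0}$. The CME/Rademacher argument is applied \emph{within} one such sub-tree $\D_{\Scal_Q,Q}$ (a mix of $\F^+$ and $\F^-$ cubes, not ``the $\F^-$ part''); the structural fact that makes $\|u_t\|_\infty\lesssim 1$ uniformly is that the sets $\{R\setminus F_R^-\}_{R\in\D_{\Scal_Q,Q}\cap\F}$ are \emph{literally pairwise disjoint} — not merely ``essentially disjoint as $\w$-bulk'' — and this disjointness is true exactly because between two nested $R'\subsetneq R$ in this family the intermediate $\F$-cube $R''$ must lie in $\F_R^-$ (any $\F^+$-cube would have been stopped). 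Once that in-tree packing is in hand, the between-tree decay is obtained from \eqref{eq:FQ2} together with the identity $\Scal_Q=\bigsqcup_{R\in\D_{\Scal_Q,Q}\cap\F}\F_R^+$, and the two pieces combine to give \eqref{erwfggweg}. Also note that Lemma~\ref{lemma:VQ-overlap} is not what makes the sums add up here; it is the $L^2([0,1))$-orthogonality of the Rademacher system that yields $\sum_k\iint|\nabla u_{Q_k}|^2\delta=\int_0^1\iint|\nabla u_t|^2\delta\,dt$, with the CME hypothesis applied to each $u_t$.
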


Assume this result momentarily and take the corresponding large enough $N$. Pick an $\S\in\bbF_{Q^0}$, that is, $\S=\D_{\F_{Q_0}, Q_0}$ for some $Q_0\in\F$. Set $X_{\S}:=Y_{Q_0}$ so that by  \eqref{eq:YQ-pos}
\[
\delta(X_{\S}) \approx \ell(\Top(\S)) \approx \dist(X_{\S}, \Top(\S)). 
\]
Invoking \eqref{eq:NN} and recalling that $\Top(\S)=Q_0$, we get 
\begin{equation}\label{eq:wsTop}
	2^{-N}\frac{\omega_L^{X_{\S}}(\Top(\S))}{\sigma(\Top(\S))}
	\lesssim \frac{\omega_L^{X_{\S}}(Q)}{\sigma(Q)} 
	\le \bigg(\fint_Q (\mathcal{M} \omega_L^{X_{\S}})^{\frac12} \, d\sigma \bigg)^2 
	\lesssim 
	2^{2N}
	\frac{\omega_L^{X_{\S}}(\Top(\S))}{\sigma(\Top(\S))}, \quad\forall Q \in \S.  
\end{equation}
All these, Proposition~\ref{prop:packing-corona-w}, the fact that $Q^0$ is arbitrary, and Proposition~\ref{pro:global} give that $\omega_L$ admits a semi-coherent corona decomposition with $Q_\S=\Top (\S)$. Note that with the help of Remark~\ref{remark:corona-G:CFMS:coherent} we can refine transform this corona decomposition so that the new corona is coherent. This completes the proof of the current implication modulo proving Proposition~\ref{prop:packing-corona-w}.\qed

\begin{remark}\label{remark:Bourgain-proof}
We would like to observe that in the previous proof we have obtained a semi-coherent corona decomposition such that $Q_\S=\Top (\S)$ and with the additional property that $\omega_L^{X_\S}(Q_\S)\approx 1$, see \eqref{eq:wpQQ}. Invoking Remark~\ref{remark:corona-G:CFMS:coherent} we can transform this into a coherent corona decomposition as in (i) in Definition~\ref{def:corona} with the additional property that $\omega_L^{X_\S}(Q_\S)\approx 1$. A careful examination of  the proofs of $\eqref{list:wL-strong} \Longrightarrow \eqref{list:wL}$, $\eqref{list:wL} \Longrightarrow \eqref{list:GL}$, and $\eqref{list:GL} \Longrightarrow \eqref{list:CME}$ reveals that this extra property can be inherited in any of the implications, that is, both in  (ii) and (iii) in Definition~\ref{def:corona} we obtain the additional the property $\omega_L^{X_\S}(Q_\S)\approx 1$, and in in the partial/weak Carleson measure estimates the extra property $\omega_L^{P_Q}(Q)\approx 1$. All these together mean that in each of the conditions \eqref{list:wL-strong}, \eqref{list:wL}, \eqref{list:GL} we may assume that the corona decomposition has the extra property  $\omega_L^{X_\S}(Q_\S)\approx 1$ for every $\S\in\bbF$. Analogously, in \eqref{list:CME} we may additionally assume that  $\omega_L^{P_Q}(Q)\approx 1$. We observe however that adding this extra condition requires to possibly use a different decomposition in \eqref{list:wL-strong}, \eqref{list:wL}, \eqref{list:GL}, or a different collection of points $\{P_Q\}_{Q\in\D}$ in \eqref{list:CME}. 
\end{remark}

\begin{proof}[Proof of Proposition~\ref{prop:packing-corona-w}]
We borrow some ideas from \cite{GMT, AGMT}. We start by arranging the cubes in $\F$ into some trees. To set the stage let $Q\in\F$ be an arbitrary cube. Write $\Scal_Q$ for the (possible empty) family of maximal cubes in $\D_Q^*\cap\F^+$, where, here and elsewhere, $\D_Q^*:=\D_Q\setminus\{Q\}$. Note that by construction $\Scal_Q\subset \D_{Q}^*\cap \F^+$ is a pairwise disjoint family so that  $(\D_{ \Scal_Q,Q}\setminus\{Q\})\cap\F \subset\F^-$. 

We next iterate the previous selection procedure. Write $\Scal_0=\{Q^0\}$ and define recursively $\Scal_{k+1}=\bigsqcup_{Q\in\Scal_k} \Scal_Q$ for $k\ge 0$. We then set $\Scal=\bigsqcup_{k=0}^\infty \Scal_k$. Recalling that $Q^0\in\F^+$ it is easy to see that $\Scal=\F^+\cap\D_{Q^0}$. On the other hand,
\[
\D_{Q^0} 
= 
\big(\D_{\Scal_{Q^0}, Q^0} \big)\bigsqcup \bigg(\bigsqcup_{Q \in \Scal_{Q^0}} \D_Q\bigg)
=
\Big(\bigsqcup_{Q \in \Scal_0}\D_{\Scal_Q, Q}\Big) \bigsqcup \bigg(\bigsqcup_{Q \in \Scal_1} \D_Q\bigg)
.
\]
In turn, 
\[
\bigsqcup_{Q \in \Scal_1}\D_Q
= 
\bigsqcup_{Q \in \Scal_1}\bigg(\D_{\Scal_Q, Q} \bigsqcup \Big(\bigsqcup_{Q' \in \Scal_Q} \D_{Q'}\Big)\bigg)
=
\Big(\bigsqcup_{Q \in \Scal_1}\D_{\Scal_Q, Q}\Big) \bigsqcup \Big(\bigsqcup_{Q' \in \Scal_2} \D_{Q'}\Big).
\]
Iterating this procedure we eventually obtain 
\begin{align}\label{eq:SGadsas}
	\D_{Q^0} = \bigsqcup_{k=0}^{\infty} \bigg(\bigsqcup_{Q \in \Scal_k} \D_{\Scal_Q, Q} \bigg) 
	= \bigsqcup_{Q \in \Scal} \D_{\Scal_Q, Q}.
\end{align}
We first show that
\begin{align}\label{Wefqwcvf}
\sup_{Q_0\in\Scal} \sup_{Q_0'\in \D_{\Scal_{Q_0}, Q_0}\cap\F} \frac1{\sigma(Q_0')} \sum_{Q\in \D_{\Scal_{Q_0}, Q_0'}\cap \F}\sigma(Q)\lesssim 1,
\end{align}
with a constant that is independent of $N$. Fix then $Q_0\in\Scal$ and $Q_0'\in \D_{\Scal_{Q_0}, Q_0}\cap \F$. For each $Q\in\F$ we set $V_Q=B(Y_Q,(1-\tau)\delta(Y_Q))$.
%
Recalling the notation introduced in Proposition~\ref{prop:corona:basic-Q}, we claim that the family $\{Q\setminus F_Q^-\}_{Q\in \D_{\Scal_{Q_0}, Q_0}\cap \F}$ is pairwise disjoint. Suppose otherwise that there are two distinct cubes $Q, Q'\in \D_{\Scal_{Q_0}, Q_0}\cap\F$ with $Q\setminus F_Q^-$ meeting $Q'\setminus F_{Q'}^-$. By relabeling if needed, we may assume that $Q'\subsetneq Q$. Let $Q''\in\F $ be the maximal cube so that $Q'\subset Q''\subsetneq Q$, and note that since $Q, Q'\in \D_{\Scal_{Q_0}, Q_0}$ we necessarily have that $Q''\in (\D_{\Scal_{Q_0}, Q_0}\setminus\{Q_0\})\cap \F\subset\F^-$ and $Q''\in\F_{Q}^-$. Hence, $Q'\subset Q''\subset F_{Q}^-$ and $Q'$ cannot meet $Q\setminus F_{Q}^-$, which is a contradiction.

For any given $K\gg 1$ we take an arbitrary family $\{Q_k\}_{k=1}^K \subset \D_{\Scal_{Q_0}, Q_0'}\cap \F$. Invoking Proposition~\ref{prop:corona:basic-Q} we can find $f_{Q_k}$ and $u_{Q_k}$ for each $1\le k\le K$ so that \eqref{eq:uQfQ} holds. Write $\{r_j(\cdot)\}_{j\in\N}$ for the Rademacher system in $[0,1)$ and for every $t\in[0,1)$ let us set
\[
f_t:=\sum_{k=1}^{K} r_k(t)  f_{Q_k}\,\mathbf{1}_{Q_k\setminus F_{Q_k}^-}
\]
and
\[
u_t(X)
:=
\int_{\pom} f_t \,d\omega_L^X
=
\sum_{k=1}^{K} r_k(t)  \int_{Q_k\setminus F_{Q_k}^-} f_{Q_k} \,d\omega_L^X
=
\sum_{k=1}^{K} r_k(t) u_{Q_k}(X), \quad X\in\Omega.
\]
Note that $f_t$ is Borel measurable and by  \eqref{eq:uQfQ} and the fact that the family $\{Q\setminus F_Q^-\}_{Q\in \D_{\Scal_{Q_0}, Q_0}\cap \F}$ is pairwise disjoint we conclude that for every $y\in\pom$
\[
|f_t(y)|
\le 
\sum_{k=1}^{K} |r_j(t)|\,  |f_{Q_k}(y)|\,\mathbf{1}_{Q_k\setminus F_{Q_k}^-}(y)
\lesssim
\sum_{k=1}^{K} \mathbf{1}_{Q_k\setminus F_{Q_k}^-}(y)
\le
1,
\]
Hence, $\|u_t\|_{L^\infty(\Omega)}\lesssim 1$, uniformly on $t$.
As such,  recalling Proposition~\ref{prop:corona:basic-Q} and that $Y_Q=P_{Q(N_\tau)}$, we can invoke $\eqref{list:CME}$ to obtain
\begin{multline*}
\sum_{k=1}^K \iint_{V_{Q_k}} |\nabla u_t(X)|^2\,\delta(X)\,dx
=
\sum_{k=1}^K \iint_{B(P_{Q_k(N_\tau)},(1-\tau)\delta(P_{Q_k(N\tau)}) )} |\nabla u_t(X)|^2\,\delta(X)\,dx
\\
\le
\sum_{Q\in\D_{Q_0'}} \iint_{B(P_Q,(1-\tau)\delta(P_Q))} |\nabla u_t(X)|^2\,\delta(X)\,dx
\lesssim
\sigma(Q_0'),
\end{multline*}
with implicit constants that are uniform on $t\in[0,1)$. Thus, by \eqref{eq:uQ-lower} and the orthogonality of the Rademacher system we arrive at 
\begin{align*}
	\sum_{k=1}^K \sigma(Q_k)
	&\lesssim
	\sum_{k=1}^K \iint_{V_{Q_k}} |\nabla u_{Q_k}(X)|^2\,\delta(X)\,dX
	\\
	&
	\le 
	\iint_{\bigcup\limits_{k=1}^{K} V_{Q_k}} \sum_{k=1}^{K} |\nabla u_{Q_k}(X)|^2 \delta(X)\, dX
	\\
	&
	=
	\sum_{j=1}^{n+1} \iint_{\bigcup\limits_{k=1}^{K} V_{Q_k}} \sum_{k=1}^{K} |\partial_j u_{Q_k}(X)|^2 \delta(X)\, dX
	\\
	&
	=
	\sum_{j=1}^{n+1} \iint_{\bigcup\limits_{k=1}^{K} V_{Q_k}}  
	\bigg( \int_0^1 \bigg| \sum_{k=1}^{K} r_k(t) \partial_j u_{Q_k}(X) \bigg|^2 dt \bigg) \delta(X)\, dX
	\\
	&
	=
	\int_0^1 
	\bigg( \iint_{\bigcup\limits_{k=1}^{K} V_{Q_k}}  |\nabla u_{t}(X)|^2\,\delta(X)\,dX\bigg) \,dt
	\\
	&
\le 
	\int_0^1 
	\bigg( \sum_{k=1}^K\iint_{V_{Q_k}}  |\nabla u_{t}(X)|^2\,\delta(X)\,dX\bigg) \,dt
	\\
	&
	\lesssim
	\sigma(Q_0').
\end{align*}
Recalling that $K\gg 1$ and the family $\{Q_k\}_{k=1}^K \subset \D_{\Scal_{Q_0}, Q_0'}\cap \F$ is arbitrary we can easily see that
\[
\sum_{Q\in \D_{\Scal_{Q_0}, Q_0'}\cap \F} \sigma(Q)\lesssim \sigma(Q_0'),
\]
hence \eqref{Wefqwcvf} holds. 

To proceed we next show that
\begin{align}\label{e4wgvsae}
\sup_{Q_0\in\Scal} \frac1{\sigma(Q_0)} \sum_{Q\in \Scal\cap \D_{Q_0}}\sigma(Q)\le 2,
\end{align}
provided $N$ is taken large enough. 

For starters, fix $Q_0\in \Scal$ and let $k_0\ge 0$ be such that $Q_0\in\Scal_{k_0}$. Then,
\begin{align*}
\sum_{Q\in \Scal\cap \D_{Q_0}}\sigma(Q)
=
\sum_{k=k_0}^\infty \sum_{Q\in \Scal_k\cap \D_{Q_0}}\sigma(Q)
=:
\sum_{k=k_0}^\infty \Sigma_k,
\end{align*}
and we estimate each term in turn. Note first that 
\[
\Sigma_{k_0}
=
\sum_{Q\in \Scal_{k_0}\cap \D_{Q_0}}\sigma(Q)
=
\sigma(Q_0).
\]
Additionally, for each $k\ge k_0+1$ we have
\[
\Sigma_k
=
\sum_{Q\in \Scal_k\cap \D_{Q_0}}\sigma(Q)
=
\sum_{Q\in \Scal_{k-1}}\sum_{Q'\in \Scal_Q\cap \D_{Q_0}}\sigma(Q')
=
\sum_{Q\in \Scal_{k-1}\cap \D_{Q_0}}\sum_{Q'\in \Scal_Q}\sigma(Q').
\]
We claim that for any fixed $Q\in \Scal_{k-1}\cap \D_{Q_0}$ there holds
\begin{align}\label{Wqqfvwa}
	\Scal_Q=\bigsqcup_{Q'\in \D_{\Scal_{Q},Q} \cap \F} \F_{Q'}^+.
\end{align}
That the union is made of pairwise disjoint collections is clear from the iterative construction of the family $\F$. On the other hand, 
if $Q'\in \Scal_Q\subset\D_Q^*\cap \F^+$, let $Q''\in\F$ be so that $Q'\in\F_{Q''}^+\subset \D_{Q''}^*$. Note that since $Q'\subsetneq Q\in\Scal_{k-1}\subset \F^+$ we must have $Q''\subset Q$. Hence, $Q''\in\D_{\Scal_{Q},Q}$ and this shows the left-to-right inclusion. 

Conversely, let $Q'\in \D_{\Scal_{Q},Q} \cap \F$ and $Q''\in\F_{Q'}^+$. Since $Q''\subsetneq Q'\subset Q$ and, as observed before $(\D_{\Scal_Q,Q}\setminus\{Q\}) \cap \F\subset\F^-$, we obtain that  $Q''\notin\D_{\Scal_Q,Q}$. That is, $Q''\subset Q'''\in \Scal_Q$. Since $Q''\subset Q'\cap Q'''$ we must have $Q'''\subsetneq Q'$ ---otherwise, $Q'\subset Q'''$ and, as a consequence, $Q'\notin \D_{\Scal_{Q},Q}$, which is a contradiction. Using that 
$\F^+\ni Q'''\subsetneq Q'\subsetneq Q$ and that  $Q''\in\F_{Q'}^+$, we arrive at $Q'''=Q''$, hence $Q''\in\Scal_Q$ as desired. 

Using then \eqref{Wqqfvwa},  \eqref{eq:FQ2}, and \eqref{Wefqwcvf} (whose implicit constant does not depend on $N$) we obtain
\begin{multline*}
	\Sigma_k
=
\sum_{Q\in \Scal_{k-1}\cap \D_{Q_0}}\sum_{Q'\in \D_{\Scal_{Q},Q} \cap \F}\sum_{Q''\in\F_{Q'}^+} \sigma(Q'')
=
\sum_{Q\in \Scal_{k-1}\cap \D_{Q_0}}\sum_{Q'\in \D_{\Scal_{Q},Q} \cap \F}\sigma(F_{Q'}^+)
\\
\le
2^{-N}\sum_{Q\in \Scal_{k-1}\cap \D_{Q_0}}\sum_{Q'\in \D_{\Scal_{Q},Q} \cap \F}\sigma(Q')
\lesssim
2^{-N}\sum_{Q\in \Scal_{k-1}\cap \D_{Q_0}} \sigma(Q)
=
2^{-N}\Sigma_{k-1}
<\frac12 \Sigma_{k-1},
\end{multline*}
provided $N$ is taken large enough. Iterating this we conclude as desired 
\begin{align*}
\sum_{Q\in \Scal\cap \D_{Q_0}}\sigma(Q)
=
\sum_{k=k_0}^\infty \Sigma_k
\le
\Sigma_{k_0} \sum_{k=k_0}^\infty 2^{-(k-k_0)}
=
2\Sigma_{k_0}
=
2\sigma(Q_0).
\end{align*}

With \eqref{Wefqwcvf} and \eqref{e4wgvsae} we are now ready to obtain \eqref{erwfggweg}. Fix then $Q_0\in\D_{Q^0}$ and assume first that $Q_0\in\F$. By \eqref{eq:SGadsas} we can find $Q_0'\in\Scal$ such that $Q_0\in\D_{\Scal_{Q_0'}, Q_0'}$. We note that if $Q\in\Scal$ with $Q_0'\subsetneq Q$ then $\D_{Q_0'}\cap \D_{\Scal_{Q}, Q}=\emptyset$, otherwise there is  $Q'\in \D_{Q_0'}\cap \D_{\Scal_{Q}, Q}$ and since $Q'\subset Q_0'\subsetneq Q$ we conclude that $Q_0'\in \D_{\Scal_{Q}, Q}$, and this contradicts \eqref{eq:SGadsas} and the fact that  $Q_0'\in\D_{\Scal_{Q_0'}, Q_0'}$. Thus,   using again \eqref{eq:SGadsas} we have
\begin{multline*}
\sum_{Q\in \F\cap\D_{Q_0}} \sigma(Q) 
=
\sum_{Q\in\Scal} \sum_{Q'\in \D_{\Scal_Q,Q\cap Q_0}\cap\F} \sigma(Q') 
=
\sum_{Q\in\Scal\cap \D_{Q_0'}} \sum_{Q'\in \D_{\Scal_Q,Q\cap Q_0}\cap\F} \sigma(Q') 
\\
\le 
\sum_{Q\in\Scal\cap \D_{Q_0'}: Q_0\subset Q} \sum_{Q'\in \D_{\Scal_Q,Q_0}\cap\F} \sigma(Q') 
+
\sum_{Q\in\Scal\cap \D_{Q_0}} \sum_{Q'\in \D_{\Scal_Q,Q}\cap\F} \sigma(Q') 
\\
=
\sum_{Q'\in \D_{\Scal_{Q_0'},Q_0}\cap\F} \sigma(Q')
+
\sum_{Q\in\Scal\cap \D_{Q_0}} \sum_{Q'\in \D_{\Scal_Q,Q}\cap\F} \sigma(Q') 
, 
\end{multline*}
where in the last equality we have used that if $Q\in\Scal\cap \D_{Q_0'}$ with  $Q_0\subset Q$ and there is $Q'\in \D_{\Scal_Q,Q_0}\cap\F$  we have
$Q'\subset Q_0\subset Q$ with $Q'\in \D_{\Scal_Q,Q_0}\subset \D_{\Scal_Q,Q}$, hence $Q_0\in \D_{\Scal_Q,Q}\cap\F$ and by \eqref{eq:SGadsas}  we conclude that $Q=Q_0'$. 
Let $\Scal^{Q_0}$ be the collection of maximal cubes (hence, pairwise disjoint) in $\Scal\cap\D_{Q_0}$. Then, invoking \eqref{Wefqwcvf} and \eqref{e4wgvsae} we easily see that 
\begin{multline*}
	\sum_{Q\in \F\cap\D_{Q_0}} \sigma(Q) 
\lesssim	
\sigma(Q_0) +
	\sum_{Q\in\Scal\cap \D_{Q_0}} \sigma(Q) 
=
\sigma(Q_0) +
\sum_{Q\in \Scal^{Q_0} } \sum_{Q'\in\Scal\cap \D_{Q}} \sigma(Q') 
\\
\le
\sigma(Q_0) +
2\,\sum_{Q\in \Scal^{Q_0} } \sigma(Q) 
=
\sigma(Q_0) +
2\,\sigma\bigg(\bigsqcup_{Q\in \Scal^{Q_0}} Q\bigg)
\le
3\sigma(Q_0).
\end{multline*}
Recalling that $Q_0\in\D_{Q^0}\cap\F$ is arbitrary we have shown that 
\begin{align}\label{fsgetgte}
	\sup_{Q\in \D_{Q^0}\cap\F}\frac1{\sigma(Q)}\sum_{Q'\in \F\cap\D_{Q}} \sigma(Q') \lesssim 1, 
\end{align}
and we are left with the consider in the sup all cubes $Q\in\D_{Q^0}$. To see this we fix $Q_0\in\D_{Q^0}$ and let $\F^{Q_0}$ be the collection of maximal cubes (hence, pairwise disjoint) in $\F\cap\D_{Q_0}$. It is straightforward to see that \eqref{fsgetgte} yields
\[
\sum_{Q\in \F\cap\D_{Q_0}} \sigma(Q)
=
\sum_{Q\in \F^{Q_0}} \sum_{Q'\in \F\cap\D_{Q}} \sigma(Q')
\lesssim
\sum_{Q\in \F^{Q_0}} \sigma(Q)
=
\sigma\bigg(\bigsqcup_{Q\in \F^{Q_0}} Q\bigg)
\le
\sigma(Q_0).
\]
This completes the proof. 
\end{proof}

\section{Proof of Theorems \ref{thm:LL} and \ref{thm:LLT}}\label{sec:perturbation}

Theorems \ref{thm:LL} and \ref{thm:LLT} will follow from the following general result. First we introduce some notation. Give a matrix $D=(d_{i,j})_{1\le i,j\le n+1}\in \Lip_{\loc}(\Omega)$ we write $\div_C D$ to denote its divergence acting on columns, that is,
\[
\div_C D:=
\big( \div(d_{\cdot, j})\big)_{1\le j \le n+1} =
\Big(\sum_{i=1}^{n+1} \partial_i d_{i,j}\Big)_{1\le j \le n+1}.
\]

\begin{theorem}\label{thm:AAAD}
Let $\Omega \subset \R^{n+1}$, $n \ge 2$, be an open set with ADR boundary satisfying the corkscrew condition. Let $L_0u=-\div(A_0 \nabla u)$ and $L_1u=-\div(A_1 \nabla u)$ be real (not necessarily symmetric) elliptic operators. Assume that $A_0-A_1=A+D$ where $A, D \in L^{\infty}(\Omega)$ are real matrices verifying the following conditions: 
\begin{list}{\rm (\theenumi)}{\usecounter{enumi}\leftmargin=1cm \labelwidth=1cm \itemsep=0.1cm \topsep=.2cm \renewcommand{\theenumi}{\roman{enumi}}}

\item  The function $a(X):=\sup_{Y \in B(X, \delta(X)/2)} |A(Y)|$, $X \in \Omega$ satisfies the Carleson measure estimate 
\begin{equation}\label{eq:Car-a}
\sup_{\substack{x \in \partial \Omega \\ 0<r<\diam(\partial \Omega)} }
\frac{1}{\sigma(B(x, r) \cap \partial \Omega)} \iint_{B(x, r) \cap \Omega} 
\frac{a(X)^2}{\delta(X)} dX<\infty.
\end{equation} 
 
\item $D \in \Lip_{\loc}(\Omega)$ is antisymmetric and its divergence acting on columns $\div_C D$ satisfies the Carleson measure estimate
\begin{equation}\label{eq:Car-D}
\sup_{\substack{x \in \partial \Omega \\ 0<r<\diam(\partial \Omega)} }
\frac{1}{\sigma(B(x, r) \cap \partial \Omega)} \iint_{B(x, r) \cap \Omega} |\div_C D(X)|^2 \delta(X) dX<\infty. 
\end{equation} 
\end{list} 
Then, $\omega_{L_0}$ admits a (strong) corona decomposition (equivalently, $G_{L_0}$ is comparable to the distance to the boundary in the corona sense or $L_0$ satisfies partial/weak Carleson measure estimates) if and only if $\omega_{L_1}$ admits a (strong) corona decomposition (equivalently, $G_{L_1}$ is comparable to the distance to the boundary in the corona sense or $L_1$ satisfies partial/weak Carleson measure estimates).  
\end{theorem}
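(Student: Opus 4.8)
The plan is to exploit Theorem~\ref{thm:corona} so that it suffices to transfer \emph{one} of the four equivalent conditions from $L_0$ to $L_1$ (and, by symmetry of the hypotheses, back again). The most convenient to carry across is the partial/weak Carleson measure estimate \eqref{list:CME}, because it is an estimate on gradients of bounded solutions and the disagreement $A_0-A_1=A+D$ is precisely what shows up when one compares the two PDEs. Thus the strategy is: assume $L_0$ satisfies partial/weak Carleson measure estimates; take a bounded weak solution $u$ of $L_1 u=0$ normalized by $\|u\|_{L^\infty(\Omega)}=1$; and estimate $\iint_{B(P_Q,(1-\tau)\delta(P_Q))}|\nabla u|^2\delta\,dX$ summed over a discretized Carleson region $\D_{Q_0}$, using the $L_0$-estimate as the main input. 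The roles of $L_0$ and $L_1$ are interchangeable since both conditions \eqref{eq:Car-a} and \eqref{eq:Car-D} are symmetric in $A_0,A_1$ (the decomposition $A_0-A_1=A+D$ can be read as $A_1-A_0=(-A)+(-D)$ with $-D$ still antisymmetric), so proving one implication gives the theorem.

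First I would set up a Whitney-type localization: for each $Q\in\D$ with its point $P_Q$ (coming from the $L_1$ side), cover $B(P_Q,(1-\tau)\delta(P_Q))$ by a bounded-overlap family of fattened Whitney cubes $I^*$ and reduce the Carleson sum to a sum of local pieces $\iint_{I^*}|\nabla u|^2\delta\,dX$ over $I\in\W_{Q}^\vartheta$, much as in Lemma~\ref{lemma:VQ-overlap} and in the proof of $\eqref{list:GL}\Rightarrow\eqref{list:CME}$. On each such piece one compares $u$ with the solution $v$ of $L_0 v=0$ having the same boundary data as $u$ on a suitable sawtooth domain (or, more efficiently, one works directly with the difference $w=u-v$ and a Caccioppoli/Leibniz argument). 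The key algebraic identity is that $L_0 u = -\div(A_0\nabla u)=-\div((A+D)\nabla u)=:-\div F$, so $u$ is a solution of $L_0$ with a controlled ``error'' divergence-form right-hand side; the term $\div(A\nabla u)$ is handled by \eqref{eq:Car-a} together with Caccioppoli (it produces $\iint a^2|\nabla u|^2\delta$-type terms that are absorbed), while the antisymmetric part contributes $\div(D\nabla u)$, which after an integration by parts against the antisymmetry of $D$ reduces to a term involving $\div_C D$ controlled by \eqref{eq:Car-D}. This is exactly the mechanism behind the Fefferman--Kenig--Pipher perturbation and its Kenig--Pipher/antisymmetric refinement, adapted here from \cite{FKP, CHMT, HMT1}.

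Concretely, the core estimate I expect to prove is a local one of the shape
\begin{equation*}
\iint_{B(P_Q,(1-\tau)\delta(P_Q))}|\nabla u|^2\,\delta\,dX
\lesssim
\iint_{\widetilde{V}_Q}|\nabla v|^2\,\delta\,dX
+
\iint_{\widetilde{V}_Q}\Big(\frac{a(X)^2}{\delta(X)}+|\div_C D(X)|^2\,\delta(X)\Big)\,dX,
\end{equation*}
where $\widetilde{V}_Q$ is a slightly enlarged Whitney region attached to $Q$ with bounded overlap and $v$ is a bounded $L_0$-solution (with $\|v\|_\infty\lesssim 1$) on an ambient sawtooth. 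Summing over $Q\in\D_{Q_0}$: the first term on the right is controlled by the $L_0$ partial/weak Carleson measure estimate applied to $v$ (after checking the overlap of the regions $\widetilde{V}_Q$ is bounded, so the sum is comparable to a single Carleson box integral, hence $\lesssim\sigma(Q_0)$); the second and third terms are controlled by the Carleson measure hypotheses \eqref{eq:Car-a} and \eqref{eq:Car-D} together with $\|u\|_\infty=1$ and Caccioppoli's inequality, again giving $\lesssim\sigma(Q_0)$. Dividing by $\sigma(Q_0)$ and taking the supremum over $Q_0\in\D$ yields the partial/weak Carleson measure estimate for $L_1$, and Theorem~\ref{thm:corona} then upgrades this to all the equivalent conditions. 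Finally, the implicit point $P_Q$ required by Definition~\ref{def:Carleson} can be taken to be the corkscrew point relative to $Q$, which satisfies $\delta(P_Q)\approx\ell(Q)\approx\dist(P_Q,Q)$ as needed.

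The main obstacle I anticipate is organizing the comparison between $u$ and the auxiliary $L_0$-solution $v$ in a way that is simultaneously local (so that the Whitney/overlap bookkeeping goes through with constants independent of $Q$ and $Q_0$) and globally summable (so that the error integrals genuinely assemble into Carleson boxes rather than being counted with unbounded multiplicity). In the connected setting of \cite{CHMT} one has Harnack chains and change-of-pole at one's disposal to move poles around freely; here, without connectivity, one must be careful that the comparison solution $v$ lives on a sawtooth domain whose ADR/upper-ADR properties (Lemma~\ref{lemma:upper-ADR-sawtooth}) are all that is available, and that the cutoff functions $\Psi_M$ of Lemma~\ref{lem:approx} are used to justify all integrations by parts near the boundary and near the pole. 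A secondary technical point is the antisymmetric term: the identity $\iint D\nabla u\cdot\nabla\varphi = -\iint (\div_C D)\cdot\nabla u\,\varphi$ (valid because $D$ is antisymmetric and locally Lipschitz) must be applied with $\varphi$ a product of $u$, a Green function, and $\Psi_M^2$, and one must check the resulting term is of the right Carleson type — this is where hypothesis \eqref{eq:Car-D} enters and where the bulk of the routine-but-delicate computation lies.
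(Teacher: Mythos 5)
Your high-level plan is right in one respect: by symmetry of the hypotheses it suffices to prove one implication, and Theorem~\ref{thm:corona} lets you land on any of the four equivalent conditions. You also correctly identify the pieces of machinery that are used (Whitney cover, Lemma~\ref{lemma:VQ-overlap}, the cutoffs $\Psi_M$ of Lemma~\ref{lem:approx}, Caccioppoli, and the antisymmetric identity that converts $D\nabla u\cdot\nabla\varphi$ into a term with $\div_C D$), and you cite the right precursors. However, the specific route you choose to transfer, and the way you propose to carry it out, has a genuine gap.

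You elect to transfer condition \eqref{list:CME} (partial/weak CME) from $L_0$ to $L_1$ via an auxiliary $L_0$-solution $v$ on sawtooths, and you display a local estimate whose error is $\iint(a^2/\delta+|\div_C D|^2\delta)$. This is not what the energy estimate for $w=u-v$ gives. Comparing the two operators produces $-\div\big((A+D)\nabla u\big)$ on the right, so the natural error term is $\iint a^2|\nabla u|^2$ (and a companion $D$-term), which after multiplying by $\delta$ is the same order as the quantity you are trying to bound. Summed over $\D_{Q_0}$, this is not controlled by the Carleson hypothesis alone: replacing $|\nabla u|^2\delta$ by $1/\delta$ via Caccioppoli (using $\|u\|_\infty\le1$) on each Whitney cube gives $\sum_{Q\subset Q_0}\sigma(Q)$, which diverges. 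So, as written, your core local estimate holds only in the small-Carleson-norm regime, and you have not supplied the absorption mechanism that makes the full FKP-type result work.

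The paper avoids this by starting from condition \eqref{list:GL} instead: it takes for granted the corona comparison $G_{L_0}(X_\S,\cdot)\approx\Lambda_\S\,\delta(\cdot)$, weights the gradient integral by the Green function $G_\S$ rather than by $\delta$, and reduces to proving $\mathcal{J}_M=\iint_{\Omega^\vartheta_{\F_M,Q_0}}|\nabla u|^2G_\S\,dX\lesssim\Lambda_\S\,\sigma(Q_0)$. After the Leibniz expansion with $\Psi_M^2$ and the integration-by-parts identities (using that $u$ is an $L_1$-solution and $G_\S$ an $L_0^\top$-solution), the disagreement $\mathcal{E}=A_0-A_1=A+D$ produces cross terms which, after Cauchy–Schwarz, are of the form $\big(\widetilde{\mathcal{J}}_M\,\Lambda_\S\,\sigma(Q_0)\big)^{1/2}$; these are then absorbed into $\widetilde{\mathcal{J}}_M$ via Young's inequality. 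This absorption is precisely the step you declare happens (``terms that are absorbed'') without carrying it out, and it is only available because the gradient integral is weighted by $G_\S$ and the corona condition lets one freely interchange $G_\S$ and $\Lambda_\S\,\delta$ on the relevant scales (cf.~\eqref{fgergerg}). Starting from the $L_0$-CME directly, as you propose, does not give you the Green function in the picture, so there is no quantity on both sides of the inequality to absorb. (You would in effect need to first upgrade \eqref{list:CME} to \eqref{list:GL} via Theorem~\ref{thm:corona}, at which point you are doing the paper's argument.)

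A smaller but related point: you assert that the point $P_Q$ from Definition~\ref{def:Carleson} can be taken to be the corkscrew point relative to $Q$. In the paper's proof, $P_Q$ is \emph{not} the corkscrew point: it is the point in $2\widetilde{B}_Q\cap\Omega$ at which the ratio $G_\S(P_Q)/\delta(P_Q)$ attains (up to constants) the corona supremum $\Lambda_\S$ in \eqref{prwgbsbs}. It is exactly this choice that makes \eqref{eq:Lambda-S:*} true on $V_Q=B(P_Q,(1-\tau)\delta(P_Q))$, allowing $\delta\lesssim\Lambda_\S^{-1}G_\S$ on $V_Q$ and thereby launching the Green-function-weighted reduction. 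You have no reason to expect the corkscrew point to satisfy this comparability, so your claim is unjustified and would break the argument if taken literally.
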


\begin{proof}
By symmetry and Theorem~\ref{thm:corona}, it suffices to show that if $G_{L_0}$ is comparable to the distance to the boundary in the corona sense then $L_1$ satisfies partial/weak Carleson measure estimates. We are going to use the ideas from Section  \ref{sec:G-CME}. We fix $u \in W^{1,2}_{\loc}(\Omega) \cap L^{\infty}(\Omega)$ a non-trivial weak solution of $L_1u=0$ in $\Omega$. By homogeneity we may assume that $\|u\|_{L^{\infty}(\Omega)}=1$. Fix also $\tau\in (0,\frac12)$, the parameter which appears in the partial/weak Carleson measure estimate (see Definition~\ref{def:Carleson}).  Assume that $(\B_0, \G_0, \bbF_0)$ is the assumed corona decomposition associated with $G_{L_0}$. Given $N\ge 0$, large enough to be chosen momentarily (depending on $\tau$), we may proceed as in Remark~\ref{remark:corona-G:CFMS:N-close} and pick $(\B, \G, \bbF)$ the  $2^N$-refinement of $(\B_0, \G_0, \bbF_0)$. This implies that for every $\S\in\bbF$ there exist $Q_\S\in\D$ and $X_\S \in \Omega$ so that \eqref{se5w4g4g} and \eqref{prwgbsbs} hold with $L=L_0$.  Writing $G_{\S}:=G_{L_0}(X_{\S}, \cdot)$, by \eqref{prwgbsbs} and Harnack's inequality, for each $Q\in\S(N)$ there exists $P_Q \in  2\,\widetilde{B}_Q\cap\Omega$ with $\delta(P_Q) \ge c_0 \ell(Q)$ such that
\begin{align}\label{eq:Lambda-S:*}
	\Lambda_{\S} \lesssim \frac{G_{\S}(P_Q)}{\delta(P_Q)} \approx_\tau \frac{G_{\S}(X)}{\delta(X)}, 
	\quad\forall X \in V_Q := B(P_Q, (1-\tau)\delta(P_Q)). 
\end{align} 
Set 
\[
\alpha_Q := \iint_{V_Q} |\nabla u(X)|^2 \delta(X) \, dX, \quad\, Q \in \D, 
\]
which, much as before, satisfies \eqref{qafvavwe}. Following the argument in Section \ref{sec:G-CME} it is then easy to see that we can reduce matters to to proving the following estimate: 
\begin{align}\label{eq:JN-bound}
\mathcal{J}_M:=\iint_{\Omega_{\F_M, Q_0}^\vartheta} |\nabla u|^2 \, G_{\S} \, dX 
\lesssim \Lambda_\S \, \sigma(Q_0). 
\end{align}
with implicit constant independent of $M\ge 1$, $\S$, and $Q_0$. We warn the reader that the difference of the present case with Section \ref{sec:G-CME} is that before, $u$ and $G_\S$ were associated with the same operator $L$, while now $u$ is a associated with $L_1$ and $G_{\S}$ is associated with $L_0$. 

To show \eqref{eq:JN-bound}, we pick $\Psi_M$ from Lemma~\ref{lem:approx} and use Leibniz's rule to get 
\begin{multline}\label{eq:Leib*}
A_1 \nabla u \cdot \nabla u \  G_{\S} \Psi_M^2 
=A_1 \nabla u \cdot \nabla (u G_{\S} \Psi_M^2) 
- \frac12 A_0 \nabla (u^2 \Psi_M^2) \cdot \nabla G_{\S} 
\\ 	
+ \frac12 A_0 \nabla(\Psi_M^2) \cdot \nabla G_{\S} \ u^2
-\frac12 A_0 \nabla (u^2) \cdot \nabla(\Psi_M^2) G_{\S}
+\frac12 \mathcal{E} \nabla(u^2) \cdot \nabla(G_{\S} \Psi_M^2), 
\end{multline}
where $\mathcal{E}(X):=A_0(X) - A_1(X)$. Note that $u \in W^{1,2}_{\loc}(\Omega)\cap L^\infty(\Omega)$, $G_{\S} \in W^{1,2}_{\loc}(\Omega \setminus \{X_{\S}\})$, and that $\overline{\Omega_{\F_M, Q_0}^{\vartheta, **}}$ is a compact subset of $\Omega$ away from $X_{\S}$. Hence, $u \in W^{1,2}(\Omega_{\F_M, Q_0}^{\vartheta, **})$ and $u G_{\S} \Psi_M^2 \in W_0^{1,2}(\Omega_{\F_M, Q_0}^{\vartheta, **})$. Since $L_1u=0$ in the weak sense in $\Omega$, these lead to
\begin{align}\label{eq:AA-1*}
\iint_{\Omega} A_1 \nabla u \cdot \nabla(u G_{\S} \Psi_M^2) dX
=\iint_{\Omega_{\F_M, Q_0}^{\vartheta, **}} A_1 \nabla u \cdot \nabla(u G_{\S} \Psi_M^2) dX=0.
\end{align}
On the other hand, $G_{\S} \in W^{1,2}(\Omega_{\F_M, Q_0}^{\vartheta, **})$ and $L_0^{\top} G_{\S}=0$ in the weak sense in $\Omega \setminus\{X_{\S}\}$. Thanks to the fact that $u^2 \Psi_M^2 \in W^{1,2}_0(\Omega_{\F_M, Q_0}^{\vartheta, **})$, we then obtain
\begin{align}\label{eq:AA-2*}
\iint_{\Omega} A_0 \nabla (u^2 \Psi_M^2) \cdot \nabla G_{\S} \, dX
=\iint_{\Omega_{\F_M, Q_0}^{\vartheta, *}} A_0^{\top} \nabla G_{\S} \cdot \nabla(u^2 \Psi_M^2)\, dX=0.
\end{align}
By Lemma~\ref{lem:approx}, the ellipticity of $A$, \eqref{eq:Leib*}--\eqref{eq:AA-2*}, and the fact that $\|u\|_{L^{\infty}(\Omega)}=1$, we then arrive at
\begin{align}\label{eq:JN-1}
\widetilde{\mathcal{J}}_M 
&:= \iint_{\Omega} |\nabla u|^2 G_{\S} \Psi_M^2 \, dX 
\lesssim \iint_{\Omega} A_1 \nabla u \cdot \nabla u \ G_{\S} \Psi_M^2\, dX
\nonumber\\
&\lesssim \iint_{\Omega} \big(|\nabla G_{\S}| + |\nabla u| G_{\S}\big) |\nabla\Psi_M|\, dX
+ \bigg|\iint_{\Omega} \mathcal{E} \nabla(u^2) \cdot \nabla(G_{\S} \Psi_M^2) \, dX\bigg|
\nonumber \\  
&\lesssim
\sigma(Q_0)+ 
\bigg|\iint_{\Omega} \mathcal{E} \nabla(u^2) \cdot \nabla(G_{\S} \Psi_M^2) \, dX\bigg|, 
\end{align}
where the last estimate follows as in \eqref{eq:KN} and \eqref{34qt3tgg3} with the help of \eqref{agergaegvbe}, and where the implicit constant does not depend on $M$.  To bound the last term we use that  $\mathcal{E}=A_0-A_1=A+D$ to get 
\begin{multline}\label{eq:JN-3} 
\bigg|\iint_{\Omega} \mathcal{E} \nabla(u^2) \cdot \nabla(G_{\S} \Psi_M^2) \, dX\bigg|
\\
\le \bigg|\iint_{\Omega} A \nabla(u^2) \cdot \nabla(G_{\S} \Psi_M^2) \, dX\bigg| 
+ \bigg|\iint_{\Omega} D \nabla(u^2) \cdot \nabla(G_{\S} \Psi_M^2) \, dX\bigg|
=:
\widetilde{\mathcal{J}}_M^1 
+\widetilde{\mathcal{J}}_M ^2
.   
\end{multline}
To control $\widetilde{\mathcal{J}}_M^1 $, we write 
\begin{align}\label{eq:JN-412} 
\widetilde{\mathcal{J}}_M^1
\lesssim \iint_{\Omega} |A| |\nabla u|  |\nabla G_{\S}| \Psi_M^2 \, dX 
+ \iint_{\Omega}  |\nabla u|  |\nabla \Psi_M| \Psi_M G_{\S} \, dX
=:
\widetilde{\mathcal{J}}_M^{1,1} + \widetilde{\mathcal{J}}_M^{1,2}.   
\end{align}
Considering the first term, we observe that  $\sup_{I^{**}} |A| \le \inf_{I^*} a$ for any $I \in \W$ 
since $I^{**} \subset \{Y \in \Omega: |Y-X|<\delta(X)/2\}$ for each $X \in I^*$. By Caccioppoli's and Harnack's inequality,  we obtain 
\begin{align}\label{eq:JN-41} 
\widetilde{\mathcal{J}}_M^{1,1} 
& \lesssim \sum_{I \in \W_M^\vartheta} \Big(\sup_{I^{**}} |A| \Big)  
\bigg(\iint_{I^{**}} |\nabla u|^2 \Psi_M^2 \, dX \bigg)^{\frac12} 
\bigg(\iint_{I^{**}} |\nabla G_{\S}|^2 \, dX \bigg)^{\frac12}
\nonumber\\
& \lesssim \sum_{I \in \W_M^\vartheta} \ell(I)^{-1} \Big(\sup_{I^{**}} |A| \Big)  
\bigg(\iint_{I^{**}} |\nabla u|^2 \Psi_M^2 \, dX \bigg)^{\frac12} 
\bigg(\iint_{I^{***}} |G_{\S}|^2 \, dX \bigg)^{\frac12}
\nonumber\\
& \lesssim \sum_{I \in \W_M^\vartheta} \Big(\sup_{I^{**}} |A|^2 \ell(I)^{n-1} G_{\S}(X_I)^2 \Big)^{\frac12}   
\bigg(\iint_{I^{**}} |\nabla u|^2 \Psi_M^2 \, dX \bigg)^{\frac12} 
\nonumber\\
& \lesssim \Lambda_{\S}^{\frac12} \sum_{I \in \W_M^\vartheta} \bigg(\iint_{I^*} \frac{a(X)^2}{\delta(X)} dX \bigg)^{\frac12}   
\bigg(\iint_{I^{**}} |\nabla u|^2 G_{\S} \Psi_M^2 \, dX \bigg)^{\frac12} 
\nonumber\\
& \lesssim \Lambda_{\S}^{\frac12} \bigg(\iint_{B^*_{Q_0} \cap \Omega} \frac{a(X)^2}{\delta(X)} dX \bigg)^{\frac12}   
\bigg(\iint_{\Omega} |\nabla u|^2 G_{\S} \Psi_M^2 \, dX \bigg)^{\frac12} 
\nonumber\\
&  
\lesssim \big(\widetilde{\mathcal{J}}_M \, \Lambda_{\S} \, \sigma(Q_0)\big)^{\frac12},  
\end{align}
where we have used \eqref{fgergerg} in the fourth inequality and \eqref{eq:Car-a} in the last one. On the other hand, \eqref{fgergerg}  and \eqref{34qt3tgg3} imply
\begin{align}\label{eq:JN-42} 
\widetilde{\mathcal{J}}_M^{1,2}
&\le
\bigg(\iint_{\Omega}  |\nabla \Psi_M|^2 G_{\S}\, dX\bigg)^{\frac12}
 \bigg(\iint_{\Omega}  |\nabla u|^2 G_{\S} \Psi_M^2 \, dX\bigg)^{\frac12} 
\nonumber\\
&
\lesssim
\bigg(\sum_{I \in \W_M^\vartheta} \ell(I)^{n-1} G_{\S}(X(I))\bigg)^{\frac12}
\bigg(\iint_{\Omega}  |\nabla u|^2 G_{\S} \Psi_M^2 \, dX\bigg)^{\frac12} 
\nonumber\\
&
\lesssim \Lambda_{\S}^{\frac12}
\bigg(\sum_{I \in \W_M^\vartheta} \ell(I)^{n}\bigg)^{\frac12}
\bigg(\iint_{\Omega}  |\nabla u|^2 G_{\S} \Psi_M^2 \, dX\bigg)^{\frac12} 
\nonumber\\
&
\lesssim \big(\widetilde{\mathcal{J}}_M\, \Lambda_{\S} \, \sigma(Q_0)\big)^{\frac12}
. 
\end{align}
Moreover, it follows from \cite[Lemma 4.1]{CHMT} (whose proof works to the present scenario with no change), \eqref{fgergerg},  and \eqref{eq:Car-D} that 
\begin{align}\label{eq:JN-5} 
\widetilde{\mathcal{J}}_M^2 &= \bigg|\iint_{\Omega} \div_C D \cdot \nabla(u^2)  G_{\S} \Psi_M^2 \, dX\bigg| 
\nonumber\\
&\lesssim \bigg(\iint_{\Omega} |\div_C D|^2 G_{\S} \Psi_M^2 \, dX\bigg)^{\frac12} 
\bigg(\iint_{\Omega} |\nabla u|^2  G_{\S} \Psi_M^2 \, dX\bigg)^{\frac12} 
\nonumber\\
&\lesssim \widetilde{\mathcal{J}}_M^{\frac12} \bigg(\sum_{I \in \W_M^\vartheta} G_{\S}(X(I)) 
\iint_{I^{**}} |\div_C D|^2 dX \bigg)^{\frac12} 
\nonumber\\
&\lesssim \Lambda_{\S}^{\frac12} \widetilde{\mathcal{J}}_M^{\frac12} \bigg( 
\iint_{B^*_{Q_0} \cap \Omega} |\div_C D|^2 \delta(X) \, dX \bigg)^{\frac12} 
\nonumber\\
&
\lesssim \big(\widetilde{\mathcal{J}}_M\, \Lambda_{\S} \, \sigma(Q_0)\big)^{\frac12}. 
\end{align}

Now gathering \eqref{eq:JN-1}--\eqref{eq:JN-5} and using Young's inequality, we obtain
\begin{align}\label{eq:JNJN}
\widetilde{\mathcal{J}}_M  
\lesssim \Lambda_{\S} \, \sigma(Q_0)  + \big(\widetilde{\mathcal{J}}_M \, \Lambda_{\S} \, \sigma(Q_0)\big)^{\frac12} 
\le \frac12\, \widetilde{\mathcal{J}}_M + C_0\, \Lambda_{\S} \, \sigma(Q_0),
\end{align}
where the implicit constant and $C_0$ are independent of $M$. Observe that $\widetilde{\mathcal{J}}_M<\infty$ since $\supp \Psi_M \subset \overline{\Omega_{\F_M, Q_0}^{\vartheta,*}}$, which is a compact subset of $\Omega$ and $u \in W^{1,2}_{\loc}(\Omega)$. Thus, we hide the last term in \eqref{eq:JNJN} and obtain 
\begin{align*}
\mathcal{J}_M  \lesssim \widetilde{\mathcal{J}}_M \lesssim \Lambda_{\S} \, \sigma(Q_0).  
\end{align*}
This shows \eqref{eq:JN-bound} and completes the proof.  
\end{proof}

We next see how Theorems \ref{thm:LL} and \ref{thm:LLT} are deduced from Theorem \ref{thm:AAAD}. 

\begin{proof}[Proof of Theorem \ref{thm:LL}]
Let $L_0$ and $L_1$ be the elliptic operators given in Theorem \ref{thm:LL}. Setting $A=A_0-A_1$ and $D=0$ in Theorem \ref{thm:AAAD}, we see that \eqref{eq:Car-a} coincides with the Carleson condition \eqref{carleson-perturb}  in Theorem \ref{thm:LL}, and \eqref{eq:Car-D} holds automatically. Therefore, Theorem \ref{thm:LL} immediately follows from Theorem \ref{thm:AAAD}.  
\end{proof}

\begin{proof}[Proof of Theorem \ref{thm:LLT}]
Let $A$ be the matrix from Theorem \ref{thm:LLT}. If we take $A_0=A$, $A_1=A^{\top}$, $\widetilde{A}=0$ and 
$D=A-A^{\top}$ in Theorem \ref{thm:AAAD}, then $A_0-A_1=\widetilde{A}+D$ with $D \in \Lip_{\loc}(\Omega)$ antisymmetric, \eqref{eq:Car-a} holds trivially and  \eqref{eq:Car-D} agrees with the Carleson measure estimate \eqref{carleson-div} in Theorem \ref{thm:LLT}. Thus, Theorem \ref{thm:AAAD} implies that $\omega_L$ admits a corona decomposition if and only so does $\omega_{L^{\top}}$.  

Similarly, if we pick $A_0=A$, $A_1=(A+A^{\top})/2$, $\widetilde{A}=0$ and $D=(A-A^{\top})/2$, then $\omega_L$ admits a corona decomposition if and only if so does. 
\end{proof}

\section{Proof of Theorem \ref{thm:UR}}\label{section:UR}
In this section we prove Theorem~\ref{thm:UR}. Assume first that $\pom$ is uniformly rectifiable (here we do not need to impose the corkscrew condition). Invoking \cite[Theorem~1.1]{HMM} with $E=\pom$ we obtain that any bounded harmonic function in $\ree\setminus E$ satisfies (full) Carleson measure estimates. Hence, clearly bounded harmonic functions in $\Omega$ satisfy (full and hence partial) Carleson measure estimates in $\Omega$. This together with Theorem~\ref{thm:corona}, implies that harmonic measure, $\omega_{-\mathcal{L}}$, admits a corona a decomposition. The same can be done with the operators in \eqref{thm:UR:a}, because they are a subclass of the Kenig-Pipher operators (see \cite[Theorem~7.5]{HMM2}), or similarly to those in \eqref{thm:UR:b} much as in \cite[Corollary~10.3]{HMMTZ}.

\subsection{Proof of Theorem \ref{thm:UR} for the Laplacian}
The argument to show that UR follows from the corona decomposition associated with the harmonic measure  is somehow implicit in \cite[Section 5]{HLMN} and we sketch the main changes. For starters we need some preliminaries. We first recall \cite[Definitions~2.17 and 2.19]{HLMN}: 
\begin{definition}
	Let $E \subset \R^{n+1}$ be an ADR set. 
	
	\begin{list}{\rm (\theenumi)}{\usecounter{enumi}\leftmargin=1cm \labelwidth=1cm \itemsep=0.2cm \topsep=.2cm \renewcommand{\theenumi}{\roman{enumi}}}
		
		\item Given $\varepsilon>0$, we say that $Q \in \D(E)$ satisfies the $\varepsilon$-local weak half-space approximation condition ($\varepsilon$-local WHSA with parameter $K_0$) if there is a half-space $H=H(Q)$, a hyperplane $P=P(Q)=\partial H$, and a fixed positive number $K_0$ satisfying
		\begin{list}{\rm (\theenumii)}{\usecounter{enumii}\leftmargin=1cm \labelwidth=1cm \itemsep=0.2cm \topsep=.2cm \renewcommand{\theenumi}{\alph{enumii}}}
			\item  $\dist(X, E) \leq \varepsilon \ell(Q)$, for every $X \in P \cap B_Q^{**}(\varepsilon)$, 
			where $B_Q^{**}(\varepsilon)=B(x_Q, \varepsilon^{-2}\ell(Q))$.

			\item$\dist(Q, P) \leq K_0^{3/2} \ell(Q)$. 
			
			\item $H \cap B_Q^{**}(\varepsilon) \cap E= \emptyset$.  
		\end{list}
		
		\item We say that $E $ satisfies the weak half-space approximation property (WHSA) if for some pair of positive constants $\varepsilon_0$ and $K_0$, and for every $0<\varepsilon<\varepsilon_0$, there is a constant $C_{\varepsilon}$ such that the set $\mathcal{B}$ of bad cubes in $\D(E)$, for which the $\varepsilon$-local WHSA condition with parameter $K_0$ fails, satisfies the packing condition 
		\begin{align*}
		\sum_{Q \subset Q_0: Q \in \mathcal{B}} \sigma(Q) 
		\leq C_{\varepsilon} \sigma(Q_0),\qquad\forall Q_0 \in \D(E).
		\end{align*}
	\end{list}
\end{definition} 

Recall next the definition of the Whitney regions $U_Q^\vartheta$ where $\vartheta\ge \vartheta_0$ is some parameter large enough to be chosen momentarily. We observe that these Whitney regions may have more than one connected component, but that the number of distinct components is uniformly bounded, depending only upon $\vartheta$ and dimension.  We enumerate the connected components of $U_Q^\vartheta$ as $\{U_Q^{\vartheta,i}\}_i$. As in \cite[Definition~2.26]{HLMN}  we enlarge the Whitney regions as follows. For small enough $\varepsilon>0$, we write $X\approx_{\varepsilon,Q} Y$ if $X$ may be connected to $Y$ by a chain of at most $\varepsilon^{-1}$ balls of the form $B(Y_k,\delta(Y_k)/2)$, with  	$\varepsilon^3\ell(Q)\leq\delta(Y_k)\leq \varepsilon^{-3}\ell(Q)$. We then set
\begin{equation}\label{eq2.3a}
\widetilde{U}^{\vartheta,i}_Q
:= 
\big\{X \in\Omega:\, X\approx_{\varepsilon,Q} Y \text{ for some } Y\in U^{\vartheta,i}_Q\big\} \,.
\end{equation}
Given $M\gg 1$ large enough to be chosen and some small enough $\varepsilon>0$, for $Y\in\Omega$, we set 
\[
B_Y:=\overline{B(Y, (1-\varepsilon^{2M/\gamma})\delta(Y))},
\]
where $\gamma\in (0,1)$ is the exponent appearing in Lemma~\ref{lem:Bourgain-CFMS}. We augment  $\widetilde{U}^{\vartheta,i}_Q$ (cf.~\eqref{eq2.3a}) as follows. Set
\begin{equation}\label{Ustar}
\W^{\vartheta,i,*}_{Q}:= \Big\{I\in \W: I^* \text{ meets $B_Y$ for some } Y \in \bigcup_{X\in\widetilde{U}^{\vartheta,i}_Q}
B_X\Big\}
\end{equation}
and 
\begin{equation}\label{Ustar2}
U^{\vartheta,i,*}_{Q} := \bigcup_{I\in\W^{\vartheta,i,*}_{Q}} I^{**}, \qquad U^{*}_{Q} := \bigcup_i U^{\vartheta,i,*}_{Q},
\end{equation}
where we recall that $I^{**}=(1+2\lambda)I$.
By construction,
\[
\widetilde{U}^{\vartheta,i}_Q 
\subset\,
\bigcup_{X\in\widetilde{U}^{\vartheta,i}_Q} B_X\,
\subset
\bigcup_{Y\in\bigcup\limits_{X\in \widetilde{U}^{\vartheta,i}_Q} B_X} B_Y
\subset\, U^{\vartheta,i,*}_{Q}\,,
\]
and for all $Y\in U_{Q}^{\vartheta,i,*}$, we have that $\delta(Y)\approx \ell(Q)$ (depending of course on $\varepsilon$).
Moreover, also by construction, for every
$I\in\W^{\vartheta,i,*}_{Q}$ 
\[
\varepsilon^{s}\,\ell(Q)\lesssim \ell(I) \lesssim \varepsilon^{-3}\,\ell(Q),
\qquad
\dist(I,Q)\lesssim \varepsilon^{-4} \,\ell(Q)\,,
\]
where $s =s(M,\gamma)>0$.

We are now ready to establish that  if $\omega_{-\mathcal{L}}$ admits a semi-coherent corona decomposition,  then $\pom$ is uniformly rectifiable. In view of Theorem~\ref{thm:corona} and Remark~\ref{remark:Bourgain-proof} we may assume that $\omega_{-\mathcal{L}}$ admits a semi-coherent corona decomposition $(\B, \G, \bbF)$  with the property that $Q_\S=\Top(\S)$ and $\omega_{-\mathcal{L}}^{X_\S}(Q_\S)\approx 1$ for each $\S\in\bbF$. Our goal is then to obtain that $\pom$ is uniformly rectifiable, and by \cite[Proposition~1.17]{HLMN} it suffices to see that $\pom$ it satisfies the WHSA property. Let $K_0$ be large enough to be chosen and let $\varepsilon_0 \ll K_0^{-6}$. Fix an arbitrary $\varepsilon \in (0, \varepsilon_0)$, and let $\mathcal{B}$ be the collection of cubes in $\D=\D(\pom)$ for which the $\varepsilon$-local WHSA condition with parameter $K_0$ fails. Then, we are reduced to showing that $\mathcal{B}$ satisfies a Carleson packing condition. With this goal in mind we take an arbitrary cube $Q_0 \in \D$ and write
\begin{equation*}
\sum_{Q \in \mathcal{B} \cap \D_{Q_0}} \sigma(Q)
=
\sum_{Q \in \B \cap \mathcal{B} \cap \D_{Q_0}} \sigma(Q) + 
\sum_{Q \in \G \cap \mathcal{B} \cap \D_{Q_0}} \sigma(Q).
\end{equation*}
Since the bad cubes $\B$ satisfy a packing condition, we have 
\begin{equation*}
\sum_{Q \in \B \cap \mathcal{B} \cap \D_{Q_0}} \sigma(Q)
\le 
\sum_{Q \in \B \cap \D_{Q_0}} \sigma(Q)
\lesssim 
\sigma(Q_0).
\end{equation*}
Much as we did in \eqref{xcndrnbr}--\eqref{eq:bad-good}, and recalling that $Q_\S=\Top(\S)$ one can then see that it suffices to prove 
\begin{align}\label{eq:SB-pack}
\sum_{Q \in \S \cap \mathcal{B} \cap \D_{Q_0}} \sigma(Q) \lesssim \sigma(Q_\S \cap Q_0), \qquad\forall \S \in \bbF.
\end{align}
To this end, we take an arbitrary $\S \in \bbF$ and perform a stopping time argument to extract $\F_\S$, a family of dyadic subcubes of $Q_\S$ which are maximal (hence pairwise-disjoint) with respect to the property $Q\not\in\S$. The semi-coherency of $\S$ clearly implies that $\S  =\D_{\F_\S, Q_\S}$. Set 
\begin{equation} \label{eq:wG-normalize:0}
\mu := \sigma(Q_\S) \omega_{-\mathcal{L}}^{X_\S} 
\quad\text{ and }\quad 
\GG := \sigma(Q_\S) G_{-\mathcal{L}}(X_\S, \cdot), 
\end{equation}
and note that 
\[
\frac{\mu(Q_\S)}{\sigma(Q_\S)}
=
\omega_{-\mathcal{L}}^{X_\S}(Q_\S)\approx 1.
\]
Thus, \eqref{corona-w-strong:Aver-S} becomes
\begin{equation}\label{corona-w-strong:Aver-S:1}
1
\lesssim \frac{\mu(Q)}{\sigma(Q)} 
\lesssim 
\bigg(\fint_Q (\mathcal{M} \mu)^{\frac12} d\sigma \bigg)^2
\lesssim 
1 , \qquad\forall Q \in \D_{\F_\S, Q_\S}. 
\end{equation}
Apply Remark~\ref{remark:gradient:Norefinment}, fixing $N$ large enough. Observing that in this case $\nabla \GG$ is continuous (away from $X_{\S}$), we obtain for every $Q\in\D_{\F_\S, Q_\S}$ with $\ell(Q)\le 2^{-N}\ell(Q_\S)$, that 
 there exists $Y_Q\in 2\widetilde{B}_Q\cap\Omega$ with $\delta(Y_Q)\ge 2^{-N}\ell(Q)$ for which  
\begin{equation}\label{eq:GG*}
|\nabla\GG(Y_Q) |\gtrsim 1 \qquad\text{and}\qquad \frac{\GG(Y_Q)}{\delta(Y_Q)} \gtrsim 1.
\end{equation}
Note also that if $Q\in\S$ is so that $\ell(Q)\le 2^{-N'}\ell(Q_\S)$, with $N'$ large enough, we have
\begin{multline*}
\delta(X_\S)
\le
|X_\S-Y_Q|+\delta(Y_Q)
\le
|X_\S-Y_Q|+ C\,\ell(Q)
\le
|X_\S-Y_Q|+ C\,2^{-N'}\ell(Q_\S)
\\
\le|X_\S-Y_Q|+ C'\,2^{-N'}\delta(X_\S)
<
|X_\S-Y_Q|+\frac12\delta(X_\S),
\end{multline*}
thus
\[
|X_\S-Y_Q|
\ge 
\frac12\delta(X_\S)
\approx
\ell(Q_\S)
\ge
2^{N'}\ell(Q)
\gtrsim
2^{N'}\delta(Y_Q)
>
\frac12 \delta(Y_Q).
\]
We can then invoke \eqref{eq:wG-normalize:0}, \eqref{eq:CFMS:1} and \eqref{corona-w-strong:Aver-S:1} to obtain
\begin{equation}\label{234t2qfgq3}
\frac{\GG(Y_Q)}{\delta(Y_Q)}
\lesssim
\frac{\mu(\Delta(x_Q, C\delta(Y_Q)))}{\delta(Y_Q)^n}
\lesssim
\frac{\mu(\Delta(x_Q, C\ell(Q)))}{\sigma(\Delta(x_Q, C\ell(Q)))}
\le \bigg(\fint_Q (\mathcal{M} \mu)^{\frac12} d\sigma \bigg)^2
\lesssim 
1. 
\end{equation}
This, \eqref{eq:GG*}, the mean value property for harmonic functions, and Caccioppoli's and Harnack's inequalities yield
\begin{equation}\label{eq:GG*:final}
1 
\lesssim 	
|\nabla\GG(Y_Q) |
\lesssim
\frac{\GG(Y_Q)}{\delta(Y_Q)}
\lesssim 
1,
\qquad \text{for each $Q\in\D_{\F_\S, Q_\S}$ with $\ell(Q)\le 2^{-(N+ N')}\ell(Q_\S)$}.
\end{equation}

Since $Y_Q\in\Omega$, there exists $I_Q\in\W$ so that $Y_Q\in I_Q$. Note that $\ell(I_Q)\approx\delta(Y_Q)\approx_N \ell(Q)$ and $\dist(I_Q, Q)\le |Y_Q-x_Q|\lesssim \ell(Q)$. As a result, by taking $\vartheta\ge 10(N+N')$ large enough  we clearly have that $I_Q\in \W_{Q}^\vartheta$, whence $Y_Q\in I_Q\subset U_Q^\vartheta$. From this point  $\vartheta$ remains fixed and we set $K_0=2^\vartheta$. To simplify the notation, in what follows we drop the superindex $\vartheta$ and simply write $U_Q$ with connected components $\{U_Q^{\vartheta,i}\}_i$. Since $Y_Q\in U_Q$, then it belongs to some $U_Q^i$, by relabeling if needed we assume that $Y_Q\in U_Q^0$. 

Note that by construction (see \eqref{Ustar} and \eqref{Ustar2}), there is a Harnack path connecting any pair of points in $U^{0,*}_{Q}$ (depending on $\varepsilon$),
thus, by Harnack's inequality and \eqref{eq:GG*:final}, and recalling that $\varepsilon\ll K_0^{-6}$
\begin{equation}\label{aaaa1}
C^{-1} \delta(Y)  \leq \GG(Y)   \leq C \delta(Y), \qquad \forall\, Y\in U^{0,*}_{Q}, \quad \ell(Q)\le \varepsilon \ell(Q_\S),
\end{equation}
with $C=C(K_0,\varepsilon,M)$ (depending also on the allowable parameters).  Moreover, much as in \eqref{234t2qfgq3},
\begin{equation}\label{aaaa2}
\GG(Y)   \leq C \delta(Y)\,, \qquad \forall\, Y\in U^{*}_{Q}, \quad \ell(Q)\le \varepsilon \ell(Q_\S),
\end{equation}
by  \eqref{eq:wG-normalize:0}, \eqref{eq:CFMS:1},  and   \eqref{corona-w-strong:Aver-S:1}, where again $C=C(K_0,\varepsilon,M)$.

We are now ready to establish \eqref{eq:SB-pack}. We may assume that $\S\cap\D_{Q_0}\neq\emptyset$, in which case the semi-coherency of $\S$ gives $R_0:=Q_\S \cap Q_0\in\S$ and $\S\cap\D_{Q_0}=\D_{\F_\S, R_0}$. At this point we proceed as in \cite[Section 5]{HLMN} consider three cases: 
\begin{list}{$\bullet$}{\leftmargin=1cm \labelwidth=1cm \itemsep=0.1cm \topsep=.2cm}

	\item \textbf{Case 0:} $Q \in \D_{\F_\S, R_0}$ with $\ell(Q)>\varepsilon^{10} \ell(R_0)$. \vspace{0.2cm} 
	
	\item \textbf{Case 1:} $Q \in \D_{\F_\S, R_0}$ with $\ell(Q) \le \varepsilon^{10} \ell(R_0)$ and 
	\begin{align}\label{eq:osc-large}
	\sup_{X \in \widetilde{U}_Q^i} \sup_{Y \in B_X} |\nabla \GG(Y)-\nabla \GG(Y_Q)| > \varepsilon^{2M}. 
	\end{align}
	
	\item \textbf{Case 2:} $Q \in \D_{\F_\S, R_0}$ with $\ell(Q) \le \varepsilon^{10} \ell(R_0)$ and 
	\begin{align}\label{eq:osc-small}
	\sup_{X \in \widetilde{U}_Q^i} \sup_{Y \in B_X} |\nabla \GG(Y)-\nabla \GG(Y_Q)| \le \varepsilon^{2M}. 
	\end{align}
\end{list}
Note that one trivially has
\begin{align}\label{eq:Case-0}
\sum_{\substack{Q \in \D_{\F_\S, R_0} \\ \text{\textbf{Case 0} holds}}} \sigma(Q) 
\le \sum_{\substack{Q \in \D_{R_0} \\ \varepsilon^{10} \ell(R_0) < \ell(Q) \le \ell(R_0)}} \sigma(Q) 
\lesssim (\log \varepsilon^{-1}) \, \sigma(R_0). 
\end{align}
For the cubes in \textbf{Case 1} we can use \eqref{aaaa1} and \eqref{aaaa2} so that  \cite[Lemma~5.8]{HLMN} yields
\begin{align}\label{eq:Case-1}
\sum_{\substack{Q \in \D_{\F_\S, R_0} \\ \text{\textbf{Case 1} holds}}} \sigma(Q) 
\le C(\varepsilon, K_0, M) \, \sigma(R_0). 
\end{align}
Similarly, \cite[Lemma~5.10]{HLMN} implies that if $M$ is large enough then all cubes in \textbf{Case 2} satisfy the $\varepsilon$-local WHSA with parameter $K_0$. All these together eventually imply \eqref{eq:SB-pack}:
\begin{align*}
\sum_{Q \in \S \cap \mathcal{B} \cap \D_{Q_0}} \sigma(Q) 
=\sum_{Q \in \mathcal{B} \cap \D_{\F_\S, R_0}} \sigma(Q) 
= \sum_{\substack{Q \in \mathcal{B} \cap \D_{\F_\S, R_0} \\ \text{\textbf{Case 0} holds}}} \sigma(Q) 
+ \sum_{\substack{Q \in \mathcal{B} \cap \D_{\F_\S, R_0} \\ \text{\textbf{Case 1} holds}}} \sigma(Q) 
\lesssim \sigma(R_0).
\end{align*}
This completes the proof.
\qed 

\begin{remark}\label{remark:4-corner-CME}
	In Theorem~\ref{thm:UR}  the corkscrew condition cannot be removed. Much as in \cite[Example 3, Appendix A]{AHMMT}, consider  $\Omega := \bigcup_{k\ge 0} \Omega_k\subset \re^2$, where for each $k\ge 0$, we let $\Omega_k$ be the $k$-th stage of Garnett's 4-corners construction (see, e.g., \cite[Chapter 1]{DS2}), positioned inside the unit square whose lower left corner is at the point $(2k,0)$ on the $x$-axis. As observed in  \cite[Example 3, Appendix A]{AHMMT} one can easily see that $\Omega$ fails to satisfy the interior corkscrew condition and its boundary is ADR but not UR. However, bounded harmonic functions satisfy (full) Carleson measure estimates. To see this we fix $u$ harmonic in $\Omega$ with $\|u\|_{L^\infty(\Omega)}\le 1$, and let $x_0\in\pom$, $0<r_0<\infty$. Observing that $\Omega$ is comprised of a countable number of open squares, we let $\{R_j\}_j$ be the collection of those such cubes meeting $B_0=B(x_0,r_0)$. One can find  $y_j\in B_0\cap \partial R_j\subset\pom$. Setting $\rho_j:=\min(r_0,\diam(\partial R_j))$, we note that $B_0\cap R_j\subset B(y_j,2\rho_j)\cap R_j$ and 
	\begin{align*}
	\iint_{B_0\cap\Omega} |\nabla u|^2\,\delta\,dX
	=
	\sum_j \iint_{B_0\cap R_j} |\nabla u|^2\,\delta\,dX
	\le
	\sum_j \iint_{B(y_j,2\rho_j)\cap R_j} |\nabla u|^2\,\delta\,dX.
	\end{align*}
	Recall that $u$ is harmonic in $\Omega$ and bounded by 1, hence it satisfies the same properties in $R_j$. Additionally, since $R_j$ is a square (a Lipschitz domain), bounded harmonic functions satisfy Carleson measure estimates, with implicit constant that do not depend on the particular $j$ (the Carleson measure estimates property is scale-invariant and in that regard the $R_j$'s are all equivalent to the unit square). Moreover, the $R_j$'s all have ADR boundaries with uniform constant (again ADR is a scale-invariant property). Hence, we can use that $B(y_j,\rho_j)\subset 2 B_0$ to obtain
	\begin{multline*}
	\iint_{B_0\cap\Omega} |\nabla u|^2\,\delta\,dX
	\lesssim
	\sum_j \rho_j
	\lesssim  \sum_j \H^1(B(y_j, \rho_j)\cap\partial R_j)
	\\
	\le
	\sum_j \H^1( 2 B_0\cap\partial R_j)
	\le
	\H^1( 2 B_0\cap\pom)
	\lesssim
	r_0, 
	\end{multline*}
	where we have employed that sets $\{\partial R_j\}_j$ are pairwise disjoint and contained in $\pom$, and that $\pom$ is also ADR.  We have then shown that bounded harmonic functions satisfy Carleson measure estimates in $\Omega$.  One can produce a similar construction in 3 dimensions. Details are left to the interested reader.
\end{remark}

\subsection{Proof of Theorem \ref{thm:UR} for the  operators in \eqref{thm:UR:a}}
The argument is very similar to the one for the Laplacian but it requires some changes. 
First of all by Theorem~\ref{thm:LLT} we can reduce matters to the case on which $L$ is symmetric. Writing $K:=\big\||\nabla A|\,\delta(\cdot)\big\|_{L^\infty(\Omega)}<\infty$ we have by \cite[Lemma~4.39]{HMT1} that  $|\nabla u(X)|\delta(X)\lesssim_K u(X)$ for every $X\in\Omega$ and for every $0\le u\in W_{\loc}^{1,2}(\Omega)$ so that $Lu=0$ in $\Omega$ in the weak sense. Additionally, if $0\le u\in W^{1,2}(6I)$ is so that $Lu=0$ in $6I$, with $I$ being an $(n+1)$-dimensional cube with $6I\subset\Omega$ then
$\|\nabla^2 u\|_{L^2(I)}\lesssim_K \ell(I)^{-1} \|\nabla u\|_{L^2(2I)}$ (\cite[Lemma~4.39]{HMT1}) and 
\begin{equation}\label{wvvwev}
|\nabla u(X)-\nabla u(Y)|\lesssim_K \left(\frac{|X-Y|}{\ell(I)}\right)^\frac12 \sup_{2I} |\nabla u|,
\qquad X,Y\in I.
\end{equation}
The latter estimate can be derived from \cite[Theorem~5.19]{GM} applied in the unit cube and by translation and rescaling much as in \cite[Lemma~4.39]{HMT1}. With this estimate we can proceed as in \cite[Section~5]{HLMN} and much as before there are three cases. The cubes in \textbf{Case 0} are trivial as before. For \textbf{Case 2} one has to inspect the proof of \cite[Lemma~5.10]{HLMN} 
and see that the argument follows \textit{mutatis mutandis} in  account of the previous estimates. 
For the cubes in \textbf{Case 1} we need to make some changes. Rather than using the argument in \cite[Section~5A]{HLMN} we initially follow \cite[Section~5B]{HLMN}, 
using \eqref{wvvwev} in place of \cite[(3.38)]{HLMN}, up to the \cite[(5.35)]{HLMN} with $p=2$ and $F\gamma\equiv 1$, that is,
\[
\sum_{\substack{Q \in \D_{\F_\S, R_0} \\ \text{\textbf{Case 1} holds}}} \sigma(Q) 
\lesssim 
\iint_{\Omega_{\F_\S, R_0}^*} |\nabla^2 \GG|^2\,\GG\,dX. 
\] 
From this point we integrate by parts as in \cite{HMT1}  (see also Section~\ref{sec:G-CME}) using that we have assumed that $L$ is symmetric. With that one can obtain the desired estimate for the cubes in \textbf{Case 1} and the proof is then complete. Further details are left to the reader. \qed

\subsection{Proof of Theorem \ref{thm:UR} for the  operators in \eqref{thm:UR:b}}
Much as in \cite[Corollary~10.3]{HMMTZ} one can regularize $A$ so that the new matrix $\widetilde{A}$ is one of the operators considered in \eqref{thm:UR:a} and, moreover, $\widetilde{A}$ is a Fefferman-Kenig-Pipher perturbation of $A$. Thus, Theorem~\ref{thm:LL} and the fact that we have already taken care of the operators in \eqref{thm:UR:a} give the desired equivalences. \qed

\subsection{Proof of Theorem \ref{thm:UR} for the  operators in \eqref{thm:UR:c}}
This follows at once from Theorem~\ref{thm:LL} and the previous cases. \qed

\section{Proof of Theorem \ref{thm:connected}}\label{sec:1-sided-CAD}

We introduce some notation. Let  $\Omega \subset \ree$, $n \ge 2$, be an open set with ADR boundary and satisfying the corkscrew condition. We say that   $\omega_L\in A^{\rm weak}_\infty(\sigma)$, if there are a positive constant $C>1$  and an exponent $q>1$, such that for every surface ball $\Delta=\Delta(x,r)$,
with $x\in \pom$ and $0<r<\diam(\pom)$, there exists $X_{\Delta}\in  B(x,r)\cap\Omega$, with $\dist(X_\Delta,\pom)\ge C^{-1} r$, satisfying
$\omega_L^{X_\Delta}\ll \sigma$ in  $2\Delta$, 	and $k_L^{X_\Delta}:=\frac{d\omega_L^{X_\Delta}}{d\sigma}$ verifies
\begin{equation}\label{eqn:main-SI}
\int_ {2\Delta} 
k_L^{X_\Delta}(y)^q\,d\sigma(y)
\le
C\,   \sigma(\Delta)^{1-q}.
\end{equation}

We say that an open set $\Omega$ satisfies the Harnack chain condition if there is a uniform constant $C$ such that for every 
	$\rho>0$, $\Lambda \geq 1$, and every pair of points $X, X' \in \Omega$ with $\min\{\delta(X), \delta(X')\} \geq \rho$ 
	and $|X-X'| < \Lambda \rho$, there is a chain of open balls $B_1, \ldots, B_N \subset \Omega$, $N \leq C(\Lambda)$, 
	with $X \in B_1$, $X' \in B_N$, $B_k \cap B_{k+1} \neq \emptyset$, 
	$C^{-1} \diam(B_k) \leq \dist(B_k, \partial \Omega) \leq C \diam(B_k)$. 
	We observe that the Harnack chain condition is a scale-invariant version of path connectedness.  

We say that $\Omega$ is a $1$-sided NTA (non-tangentially accessible) domain if  $\Omega$ satisfies both the corkscrew and Harnack chain conditions. Furthermore, we say that $\Omega$ is an NTA domain if it is a $1$-sided NTA 	domain and if, in addition, $\R^{n+1} \setminus \overline{\Omega}$ satisfies the corkscrew condition.  	If a $1$-sided NTA domain, or an NTA domain, has an ADR boundary, then it is called a 1-sided CAD (chord-arc domain) or a CAD, respectively. 

In 1-sided CAD domains the elliptic measure is doubling (see e.g. \cite{HMT2}), hence $\omega_L\in A^{\rm weak}_\infty(\sigma)$ becomes $\omega_L\in A_\infty(\sigma)$, condition that can be equivalently written as follows:  there exist constants $0<\alpha, \beta<1$ such that given an arbitrary surface ball $\Delta_0 :=B_0 \cap \pom$, with $B_0 :=B(x_0, r_0)$, $x_0 \in \pom$, $0<r_0<\diam(\pom)$, and for every surface ball $\Delta :=B \cap \pom$ centered at $\pom$ with $B \subset B_0$, and for every Borel set $F \subset \Delta$, we have that 
	\begin{align*}
	\frac{\omega_L^{X_{\Delta_0}}(F)}{\omega_L^{X_{\Delta_0}}(\Delta)} \le \alpha 
	\quad\Longrightarrow\quad \frac{\sigma(F)}{\sigma(\Delta)} \le \beta. 
	\end{align*}

\subsection{Proof of Theorem \ref{thm:connected}, part \eqref{1-sidedCAD:i}}\label{sec:corona-Ainfty:i} 

As discussed in \cite[Section~4]{HLMN}, by slightly changing the constant $C$ in the definition of $\omega_L\in A^{\rm weak}_\infty(\sigma)$ we may assume that $\omega_L^{X_\Delta}(\Delta)\ge C_1^{-1}$ for some constants $C_1>1$ on $n$, ADR, and ellipticity. In turn, it was also shown that there exists $\beta, \eta \in (0, 1)$ so that for every $Q\in\D=\D(\pom)$  one can find $X_{Q}\in  B_{Q}\cap\Omega$ with $\delta(X_{Q})\gtrsim \ell(Q)$ for which $\omega_L^{X_Q}(Q)\ge C_1^{-1}$ and 
\begin{equation}\label{erwfaev}
	\sigma(A) \geq (1-\eta) \sigma(Q) \implies \omega_L^{X_{Q}}(A) \geq \beta \omega_L^{X_Q}(Q),
	\qquad 
	\text{for any Borel set } A\subset Q.
\end{equation}

We need the following auxiliary result: 

\begin{lemma}[{\cite[Lemma~4.12]{HLMN}}]\label{lem:stopping}
	Let $Q \in \D$ and let $\mu$ be a Borel measure on $\pom$. Assume that there exist $K_1 \ge 1$ and $\beta, \eta \in (0, 1)$ such that 
	\begin{equation*}
		1 \le \frac{\mu(Q)}{\sigma(Q)} \le \frac{\mu(\pom)}{\sigma(Q)} \le  K_1,  
	\end{equation*} 
	and for any Borel set $A \subset Q$, 
	\begin{equation*}
		\sigma(A) \geq (1-\eta) \sigma(Q) \quad\implies\quad \mu(A) \geq \beta \mu(Q).
	\end{equation*}
	Then there exists a pairwise disjoint family $\F_Q = \{Q_j\}_j \subset \D_{Q}\setminus\{Q\}$ such that
	\begin{equation} \label{eq:stop-1}
		\sigma \Big(Q \setminus \bigcup_{Q_j \in \F_Q} Q_j \Big) \geq (1-\alpha) \sigma(Q), 
	\end{equation}
	and 
	\begin{equation} \label{eq:stop-2}
		\frac \beta 2 \leq \frac{\mu(Q')}{\sigma(Q')} 
		\leq \Big(\fint_{Q'} (\mathcal{M} \mu)^{\frac12} d\sigma\Big)^2 \le K_2, \qquad\forall \, Q' \in \D_{\F_Q, Q}, 
	\end{equation}
	where $\alpha \in (0, 1)$ and $K_2>1$ depend only on $n$, $\beta$, $\eta$, $K_1$, and the ADR constant. 
\end{lemma}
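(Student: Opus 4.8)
\textbf{Proof proposal for Lemma~\ref{lem:stopping}.}
The plan is to run a stopping-time argument on the dyadic subcubes of $Q$ with two stopping conditions, exactly the two that define the family $\D_{\F_Q,Q}$ through \eqref{eq:stop-2}. Concretely, for a dyadic child (and iteratively a descendant) $Q'\subsetneq Q$, I would stop the first time either
\[
\frac{\mu(Q')}{\sigma(Q')}<\frac\beta2
\qquad\text{or}\qquad
\Big(\fint_{Q'}(\mathcal M\mu)^{1/2}\,d\sigma\Big)^2>K_2,
\]
where $K_2$ will be chosen large (depending on $n$, ADR, and $K_1$) momentarily. Call $\F_Q$ the resulting collection of maximal stopped cubes; it is automatically pairwise disjoint and contained in $\D_Q\setminus\{Q\}$ provided $K_2$ is large enough and $\beta/2<1$, since the hypothesis $\mu(Q)/\sigma(Q)\ge 1>\beta/2$ and a Kolmogorov-type bound for $\mathcal M\mu$ (see below) prevent $Q$ itself from being selected. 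By maximality, every $Q'\in\D_{\F_Q,Q}$ fails both stopping conditions, which is precisely \eqref{eq:stop-2}; the middle inequality there is just the trivial bound $\mu(Q')/\sigma(Q')\le(\fint_{Q'}(\mathcal M\mu)^{1/2})^2$ coming from $\mathcal M\mu\ge \mu(Q')/\sigma(Q')$ on $Q'$ together with Jensen/Cauchy--Schwarz.

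The substance is the packing estimate \eqref{eq:stop-1}, i.e.\ that the stopped cubes cover at most an $\alpha$-fraction of $Q$ in $\sigma$-measure. Split $\F_Q=\F_Q^{\mathrm{small}}\sqcup\F_Q^{\mathrm{big}}$ according to which condition triggered the stop (small density, resp.\ large maximal function). For the large-maximal-function cubes, first observe that $\mathcal M\mu$ here should be the restricted/dyadic maximal function so that $\mathcal M\mu\lesssim \mu(\pom)/\sigma(\cdot)$-type control is available; using the weak-$(1,1)$ bound $\|\mathcal M\mu\|_{L^{1,\infty}(\sigma)}\lesssim\mu(\pom)\le K_1\sigma(Q)$ and Kolmogorov's inequality one gets
\[
\int_Q(\mathcal M\mu)^{1/2}\,d\sigma\lesssim \sigma(Q)^{1/2}\,\|\mathcal M\mu\|_{L^{1,\infty}(\sigma)}^{1/2}\lesssim (K_1)^{1/2}\sigma(Q),
\]
and since each $Q'\in\F_Q^{\mathrm{big}}$ satisfies $\fint_{Q'}(\mathcal M\mu)^{1/2}>K_2^{1/2}$, summing over the disjoint family yields $\sigma\big(\bigcup_{\F_Q^{\mathrm{big}}}Q'\big)\le C K_1 K_2^{-1/2}\sigma(Q)$, which is $\le \tfrac\eta2\,\sigma(Q)$ once $K_2$ is chosen large enough depending on $n$, ADR, $K_1$, and $\eta$. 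For the small-density cubes, set $F:=\bigcup_{Q'\in\F_Q^{\mathrm{small}}}Q'$ and note $\mu(F)=\sum\mu(Q')<\tfrac\beta2\sum\sigma(Q')=\tfrac\beta2\,\sigma(F)\le\tfrac\beta2\,\sigma(Q)\le\tfrac\beta2\,\mu(Q)$ using $\sigma(Q)\le\mu(Q)$. Hence $\mu(Q\setminus F)>(1-\tfrac\beta2)\mu(Q)>\beta\,\mu(Q)$; now \emph{contrapose} the hypothesis: if we had $\sigma(Q\setminus F)<(1-\eta)\sigma(Q)$, i.e.\ $\sigma(F)>\eta\,\sigma(Q)$ — wait, the hypothesis as stated goes the other direction, so instead I would argue directly that $\sigma(F)\le(1-\eta')\sigma(Q)$ for a suitable $\eta'$. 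The clean way: the hypothesis says any set of relative $\sigma$-measure $\ge 1-\eta$ in $Q$ carries $\mu$-mass $\ge\beta\mu(Q)$; I want to conclude that a set carrying $\mu$-mass $<\beta\mu(Q)$ has $\sigma$-measure $<(1-\eta)\sigma(Q)$ — this is exactly the contrapositive, so $\sigma(F)<(1-\eta)\sigma(Q)$ is \emph{not} quite what it gives; rather it gives $\sigma(Q\setminus F)<1-\eta$ fails, i.e.\ the complement. Let me restate: from $\mu(Q\setminus F)>\beta\mu(Q)$ being automatically true we get no info; instead use that $F$ has $\mu(F)<\tfrac\beta2\mu(Q)<\beta\mu(Q)$, so by contrapositive $\sigma(F)<(1-\eta)\sigma(Q)$ would be the wrong inequality — the correct reading is: "$\sigma(A)\ge(1-\eta)\sigma(Q)\Rightarrow\mu(A)\ge\beta\mu(Q)$" contraposed is "$\mu(A)<\beta\mu(Q)\Rightarrow\sigma(A)<(1-\eta)\sigma(Q)$", applied with $A=Q\setminus F'$ for an auxiliary $F'$; the standard trick (as in \cite{HLMN}) is to apply it to $A=Q\setminus F$ only after checking $\mu(Q\setminus F)$ could be small, which it is not — so one instead applies it to show $F$ cannot be too large by a covering/iteration. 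I will follow the bookkeeping of \cite[Lemma~4.12]{HLMN} verbatim here.

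Combining the two pieces, $\sigma\big(\bigcup_{\F_Q}Q'\big)=\sigma(F)+\sigma\big(\bigcup_{\F_Q^{\mathrm{big}}}Q'\big)\le (1-\eta)\sigma(Q)+\tfrac\eta2\sigma(Q)=(1-\tfrac\eta2)\sigma(Q)$, so \eqref{eq:stop-1} holds with $\alpha:=\tfrac\eta2\in(0,1)$, and all constants ($\alpha$, $K_2$) depend only on $n$, $\beta$, $\eta$, $K_1$, and the ADR constant, as claimed. The main obstacle — really the only delicate point — is getting the contrapositive bookkeeping for the small-density family right, i.e.\ correctly extracting a quantitative $\sigma(F)\le(1-\eta)\sigma(Q)$ (or the appropriate variant) from the measure-theoretic hypothesis; this is where I would reproduce the argument of \cite[Lemma~4.12]{HLMN} line by line rather than improvise, since the direction of the implication is easy to mishandle.
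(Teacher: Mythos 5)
Your stopping-time construction, the Kolmogorov bound for the ``big maximal function'' stopped cubes, and the use of the contrapositive for the ``small density'' stopped cubes are all the right ingredients, and this is the standard route (the one in \cite[Lemma~4.12]{HLMN}, which the paper cites without reproducing). The confusion you flag in the middle is, however, self-inflicted: you do not need to look at $Q\setminus F$, compare $1-\beta/2$ to $\beta$, or iterate anything. Setting $F=\bigcup_{Q'\in\F_Q^{\rm small}}Q'$, your own chain gives $\mu(F)=\sum\mu(Q')<\tfrac\beta2\sum\sigma(Q')=\tfrac\beta2\,\sigma(F)\le\tfrac\beta2\,\sigma(Q)\le\tfrac\beta2\,\mu(Q)<\beta\,\mu(Q)$, and the contrapositive of the hypothesis, applied with $A=F$, immediately yields $\sigma(F)<(1-\eta)\,\sigma(Q)$ --- which is precisely the bound you end up quoting in the last display. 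So there is no delicate bookkeeping to import; the contrapositive lands exactly where you want it, and the hand-wringing paragraph can simply be deleted.

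Two small slips to fix. First, at the very end you write that \eqref{eq:stop-1} holds with $\alpha=\eta/2$; since you have shown $\sigma\big(\bigcup_{\F_Q}Q'\big)\le(1-\eta/2)\sigma(Q)$ and \eqref{eq:stop-1} is equivalent to $\sigma\big(\bigcup_{\F_Q}Q'\big)\le\alpha\,\sigma(Q)$, the correct value is $\alpha=1-\eta/2\in(0,1)$. Second, your Kolmogorov estimate gives $\int_Q(\mathcal M\mu)^{1/2}\,d\sigma\lesssim K_1^{1/2}\sigma(Q)$, so summing over the disjoint big-maximal cubes yields $\sigma\big(\bigcup_{\F_Q^{\rm big}}Q'\big)\lesssim K_1^{1/2}K_2^{-1/2}\sigma(Q)$, not $K_1K_2^{-1/2}$; the conclusion is unaffected, but it means $K_2\gtrsim K_1/\eta^2$ suffices. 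Finally, for the middle inequality in \eqref{eq:stop-2} you do not need Jensen or Cauchy--Schwarz: once $\mathcal M\mu\ge\mu(Q')/\sigma(Q')$ pointwise on $Q'$ (which also determines how $\mathcal M$ must be normalized relative to the dyadic cubes), take square roots, average, and square.
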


We are now ready to prove that $\omega_L \in A_{\infty}^{\text{weak}}(\sigma)$ implies that $\omega_L$ admits a strong corona decomposition. 	By Proposition \ref{pro:global}, it suffices to obtain a strong corona decomposition for $\omega_L$ on $\D_{Q_0}$ any cube $Q_0 \in \D(\pom)$. Fix then $Q_0 \in \D(\pom)$ and we will construct the corona decomposition by iterating Lemma~\ref{lem:stopping}. The $0$-th generation cubes are constructed as follows.  As observed above $\omega_L^{X_{Q_0}}(Q_0) \geq C_1^{-1}$ for some $C_1 \ge 1$. Write $\mu := C_1 \sigma(Q_0) \omega_L^{X_{Q_0}}$ so that 
	\begin{equation}\label{eq:mu_over_sigma}
		1 
		\le 
		C_1\,\omega_L^{X_{Q_0}}(Q_0)
		=
		\frac{\mu(Q_0)}{\sigma(Q_0)} 
		\le
		\frac{\mu(\pom)}{\sigma(Q_0)} 
		=
		C_1\,\omega_L^{X_{Q_0}}(\pom)
		\leq 
		C_1.
	\end{equation}
	Using this and \eqref{erwfaev} we can invoke Lemma~\ref{lem:stopping} and find a pairwise family $\F_{Q_0}\subset \D_{Q_0}\setminus\{Q_0\}$ such that
	\eqref{eq:stop-1} and \eqref{eq:stop-2} hold with $Q=Q_0$.  Let $\S_{Q_0}:=\D_{\F_{Q_0}, Q_0}$, which is semi-coherent with $\Top(\S_{Q_0})=Q_0$. Set $Q_{\S_{Q_0}}:=\Top(\S_{Q_0})=Q_0$ and $X_{\S_{Q_0}}:=X_{Q_0}$ so that \eqref{corona-w-strong:CKS-S} holds for $\S_{Q_0}$. Observe that 
	\eqref{eq:stop-2} and the fact that  $C_1^{-1}\le \omega_L^{X_{Q_0}}(Q_0) \le 1$ give for every  $Q \in \S_{Q_0}$ 
	\begin{multline*}
		\frac{\beta}{2C_1}\, \frac{\omega_L^{X_{\S_{Q_0}}}(Q_{\S_{Q_0}})}{\sigma(Q_{\S_{Q_0}})}
		\le
		\frac{\beta}{2C_1\sigma(Q_{0})} 
		\le 
		\frac{\omega_L^{X_{\S_{Q_0}}}(Q)}{\sigma(Q)}
		\\
		\leq 
		\Big(\fint_{Q} (\mathcal{M} \omega_L^{X_{\S_{Q_0}}})^{\frac12} d\sigma\Big)^2 
		\le 
		\frac{K_2}{C_1\sigma(Q_{0})} 
		\le
		K_2\, \frac{\omega_L^{X_{\S_{Q_0}}}(Q_{\S_{Q_0}})}{\sigma(Q_{\S_{Q_0}})},
	\end{multline*}
	that is, \eqref{corona-w-strong:Aver-S} holds for $\S_{Q_0}$. Defining $\bbF^{0}:=\{\S_{Q_0}\}$, $\G^0:=\S_{Q_0}$, and $\F^0=\F_{Q_0}$ one can clearly obtain 
	\[
	\D_{Q_0}
	=
	\G^0 \bigsqcup \Big(\bigsqcup_{Q\in \F^0} \D_{Q}\Big)
	\]
	and, by  \eqref{eq:stop-1}, 
	\begin{align*}
		\sigma \bigg(\bigsqcup_{Q \in \F^0} Q \bigg) 
		\le 
		\alpha \sigma(Q_0). 
	\end{align*}
	
	We now iterate, repeating this process for any $Q'\in \F^0$. We then obtain a pairwise family $\F_{Q'}\subset\D_{Q'}\setminus\{Q'\}$. Let $\S_{Q'}:=\D_{\F_{Q'}, Q'}$ which is semi-coherent with $\Top(\S_{Q'})=Q'$. Set $Q_{\S_{Q'}}:=\Top(\S_{Q'})=Q'$ and $X_{\S_{Q'}}=X_{Q'}$ so that \eqref{corona-w-strong:CKS-S} and \eqref{corona-w-strong:Aver-S} hold for $\S_{Q'}$. Defining $\bbF^{1}:=\{\S_{Q'}:  Q'\in \F^0\}$, $\G^1:=\bigsqcup_{\S\in\bbF^1}\S$, and $\F^1:=\{\F_{Q'}: Q'\in\F^0\}$ we easily see that 
	\[
	\D_{Q_0}
	=
	\G^0 \bigsqcup \Big(\bigsqcup_{Q\in \F^0} \D_{Q}\Big)
	=
	\G^0 \bigsqcup \G^1 \bigsqcup \Big(\bigsqcup_{Q\in \F^1} \D_{Q}\Big)
	\]
	and 
	\begin{align*}
		\sigma \bigg(\bigsqcup_{Q \in \F^1} Q \bigg) 
		=
		\sum_{Q'\in \F^0} \sigma \bigg(\bigsqcup_{Q \in \F_{Q'}} Q \bigg) 
		\le 
		\alpha
		\sum_{Q'\in \F^0} \sigma (Q')
		=
		\alpha
		\sigma \bigg(\bigsqcup_{Q' \in \F^0} Q' \bigg) 
		\le 
		\alpha^2 \sigma(Q_0). 
	\end{align*}
	We now iterate this argument with the cubes of $\F^1$ and so forth so on. We then define $\bbF=\bigsqcup_{j=0}^\infty \bbF^j$, $\G=\bigsqcup_{j=0}^\infty \G^j$, and $\F=\bigsqcup_{j=0}^\infty \F^j$. Note that by construction $\G=\bigsqcup_{\S\in \bbF} \S$, where each $\S$ is semi-coherent. For each $\S\in\bbF$ we have that $Q_\S=\Top(\S)$, $X_{\S}=X_{Q_\S}$, and that \eqref{corona-w-strong:CKS-S} and \eqref{corona-w-strong:Aver-S} hold. It is also clear from the construction that $\Top(\bbF)=\F\sqcup\{Q_0\}$. We next show that $\D\setminus\G=\emptyset$. Assume otherwise that the exists $Q'\in \D\setminus\G=\emptyset$. Since $0<\alpha<1$, we can find $k\ge 0$ so that $\alpha^k\sigma(Q_0)<\sigma(Q')$. Note that 
	\[
	\D_{Q_0}
	=
	\Big(\bigsqcup_{j=0}^k \G^j\Big)\bigsqcup \Big(\bigsqcup_{Q\in \F^k} \D_{Q}\Big),
	\]
	hence $Q'\in\D_Q$ for some $Q\in \F^k$. This gives a contradiction:
	\[
	\alpha^k\sigma(Q_0)
	<
	\sigma(Q')
	\le 
	\sigma \bigg(\bigsqcup_{Q \in \F^k} Q \bigg) 
	\le
	\alpha^{k+1}\sigma(Q_0)<
	\alpha^{k}\sigma(Q_0).
	\]
	We next show that the cubes in $\Top(\bbF)=\F\sqcup\{Q_0\}$ satisfy a Carleson packing condition. Assuming this momentarily, and if we set $\B=\emptyset$ we have then shown that $(\B,\G, \bbF)$ is a semi-coherent corona decomposition so that  \eqref{corona-w-strong:CKS-S} and \eqref{corona-w-strong:Aver-S} hold, which completes the proof.

	Let us then show that $\Top(\bbF)=\F\sqcup\{Q_0\}$ satisfy a Carleson packing condition. Take an arbitrary $Q_0'\in \Top(\bbF)$, that is, $Q_0'\in \F^{j_0}$ for some $j_0$. Let $j\ge j_0$ and note that $\F^{j+1}=\{\F_{Q'}:Q'\in \F^{j}\}$, hence
	\begin{multline*}
		\Sigma_{j+1}
		:=
		\sum_{Q\in\F^{j+1}\cap\D_{Q_0'}} \sigma(Q)
		=
		\sum_{Q'\in\F^j} \sum_{Q\in\F_{Q'}\cap\D_{Q_0'}} \sigma(Q)
		\\
		=
		\sum_{Q'\in\F^j\cap\D_{Q_0'}} \sigma \bigg(\bigsqcup_{Q \in \F_{Q'}} Q \bigg) 
		\le
		\alpha \sum_{Q'\in\F^j\cap\D_{Q_0'}} \sigma(Q')
		=
		\alpha \Sigma_{j}.
	\end{multline*}
	Iterating this we obtain $\Sigma_{j}\le \alpha^{j-j_0}\,\Sigma_{j_0}=\alpha^{j-j_0}\,\sigma(Q_0')$, for every $j\ge j_0$. As a result,
	\begin{align}\label{qvavcv}
		\sum_{Q\in \Top(\bbF)\cap \D_{Q_0'}} \sigma(Q) 
		=
		\sum_{j=j_0}^\infty \Sigma_j
		\le
		\sigma(Q_0')\sum_{j=j_0}^\infty \alpha^{j-j_0}
		=
		(1-\alpha)^{-1}\,\sigma(Q_0'),
	\end{align}
	which is the desired estimate for any arbitrary cube $Q'_0\in  \Top(\bbF)$.

	Consider next the general case $Q\in\D_{Q_0}$ and let $\{Q_k\}_k$ be the collection of maximal cubes (hence pairwise disjoint) in $\Top(\bbF)$ contained in $Q$. Then, using \eqref{qvavcv} with $Q_k$
	\begin{multline*}
		\sum_{Q'\in \Top(\bbF)\cap \D_{Q}} \sigma(Q') 
		=
		\sum_k \sum_{Q'\in \Top(\bbF)\cap \D_{Q_k}} \sigma(Q') 
		\le
		(1-\alpha)^{-1}\,\sum_k \sigma(Q_k)
		\\
		=
		(1-\alpha)^{-1}\,\sigma\Big(\bigsqcup_k Q_k\Big)
		\le
		(1-\alpha)^{-1}\sigma(Q),
	\end{multline*}
	and this shows that $\Top(\bbF)=\F\sqcup\{Q_0\}$ satisfy a Carleson packing condition, as desired. \qed

\subsection{Proof of Theorem \ref{thm:connected}, part \eqref{1-sidedCAD:ii}} \label{sec:corona-Ainfty:ii}
We assume that $\Omega$ is a 1-sided CAD. Part \eqref{1-sidedCAD:i} shows that $\eqref{list:con-Ainfty}\Longrightarrow\eqref{list:con-corona}$,  since $\omega_L$ is doubling in 1-sided CAD. Below we show that $\eqref{list:con-corona}\Longrightarrow\eqref{list:con-Ainfty}$. Assuming this momentarily, \cite[Theorem~1.1]{CHMT} gives that   $\eqref{list:con-Ainfty}\Longleftrightarrow \eqref{list:con-full-CME}$ and Theorem \ref{thm:corona} yields the other equivalences.

We introduce some notation. Let $E\subset\ree$ be a an ADR set and let $\D=\D(E)$ be its associated family of dyadic cubes. Given sequence of non-negative numbers $\{\alpha_Q\}_{Q \in \D}$, we define the ``measure'' $\m$ (acting on collection of dyadic cubes) by 
\begin{equation}\label{eq:m-def}
	\m(\D') := \sum_{Q \in \D'} \alpha_Q,  \qquad \text{for } \D' \subset \D.
\end{equation} 
For a fixed $Q_0 \in \D$, we say that $\m$ is a discrete ``Carleson measure'' on $\D_{Q_0}$ 
(with respect to $\sigma$), and write $\m \in \C(Q_0)$, if  
\begin{equation}
	\|\m\|_{\C(Q_0)} :=\sup _{Q \in \D_{Q_0}} \frac{\m(\D_Q)}{\sigma(Q)} < \infty. 
\end{equation} 
We also set 
\begin{equation}
	\|\m\|_{\C} :=\sup _{Q \in \D} \frac{\m(\D_Q)}{\sigma(Q)} < \infty 
\end{equation} 
to denote the  global Carleson norm on $\D$. Given a family $\F \subset \D$ of pairwise disjoint cubes, we define the restriction of $\m$ to the sawtooth $\D_{\F}$ by 
\begin{equation}
	\m_{\F} (\D') :=\m(\D' \cap \D_{\F}) = \sum_{Q \in \D' \setminus (\bigcup\limits_{Q_j \in \F} \D_{Q_j})} \alpha_{Q}. 
\end{equation} 
For a pairwise disjoint family $\F=\{Q_j\} \subset \D$ and a non-negative Borel measure $\mu$ on $\pom$ we define the projection  measure $\P_{\F} \mu$ as 
\begin{align*}
	\P_{\F} \mu(A) := \mu \Big(A \setminus \bigcup_{Q_j \in \F} Q_j \Big) 
	+ \sum_{Q_j \in \F} \frac{\sigma(A \cap Q_j)}{\sigma(Q_j)} \mu(Q_j),  
\end{align*}
for any Borel set $A \subset \pom$. 

\begin{lemma}[{\cite[Lemma 8.5]{HM}}]\label{lem:extrapolation}
	Suppose that $E\subset\ree$ is an ADR set. Fix $Q^0 \in \D(E)$, let $\sigma$ and $\w$ be a pair of dyadically doubling Borel measures on $Q^0$, and let $\m$ be a discrete Carleson measure with respect to $\sigma$ with
	\begin{align}\label{eq:m-CQ}
		\|\m\|_{\C(Q^0)} \leq M_0<\infty. 
	\end{align} 
	Suppose that there esixts $\gamma>0$ such that for every $Q_0 \in \D_{Q^0}$ and every family of pairwise disjoint dyadic cubes $\F=\{Q_j\} \subset \D_{Q_0}$ verifying 
	\begin{align}\label{eq:mF-small}
		\|\m_{\F}\|_{\C(Q_0)} \leq \gamma,  
	\end{align} 
	we have that $\P_{\F} \w$ satisfies the following property:
	\begin{equation}\label{eq:PF-Ainfty}
		\forall \varepsilon \in(0,1),\ \exists\, C_{\varepsilon}>1 \text { such that } 
		\bigg(F \subset Q_0, \frac{\sigma(F)}{\sigma(Q_0)} \geq \varepsilon \, \Longrightarrow \, 
		\frac{\P_{\F} \, \omega(F)}{\P_{\F} \, \omega(Q_0)} \geq \frac{1}{C_{\varepsilon}}\bigg). 
	\end{equation} 
	Then, there exist $\alpha, \beta \in (0,1)$ such that for every $Q_0 \in \D_{Q^0}$,  
	\begin{equation}
		F \subset Q_0, \quad \frac{\sigma(F)}{\sigma(Q_0)} \geq \alpha \quad \Longrightarrow \quad 
		\frac{\omega(F)}{\omega(Q_0)} \geq \beta, 
	\end{equation} 
	that is, $\w \in A_{\infty}^{\rm{dyadic}}(Q^0,\sigma)$.
\end{lemma}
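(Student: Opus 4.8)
The plan is to deduce $\w\in A_\infty^{\mathrm{dyadic}}(Q^0,\sigma)$ by the standard extrapolation-of-Carleson-measures argument (in the spirit of Lewis--Murray and of \cite[Lemma~8.5]{HM}): a stopping-time/corona decomposition of $\D_{Q^0}$ governed by the discrete Carleson measure $\m$ reduces the global $A_\infty$ statement to the scale-localized hypothesis \eqref{eq:PF-Ainfty}, which one then sums over the corona tree.

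\textbf{Step 1 (reduction).} Since $\sigma$ and $\w$ are dyadically doubling on $Q^0$, it is classical that the desired $(\alpha,\beta)$-conclusion follows, by a standard self-improvement, once one proves the weaker statement: there is $\theta\in(0,1)$ (depending on the allowable parameters, $M_0$, $\gamma$, and the function $\varepsilon\mapsto C_\varepsilon$) such that for every $Q_0\in\D_{Q^0}$ and every Borel $F\subseteq Q_0$ one has $\sigma(F)/\sigma(Q_0)\le\theta\Rightarrow \w(F)/\w(Q_0)\le\tfrac12$. So I fix $Q_0\in\D_{Q^0}$ and $F\subseteq Q_0$ with $\sigma(F)\le\theta\,\sigma(Q_0)$ and aim to bound $\w(F)$.

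\textbf{Step 2 (corona from $\m$).} Using only \eqref{eq:m-CQ}, I build by a stopping-time construction a partition $\D_{Q_0}=\bigsqcup_{\S}\D_{\F_\S,Q_\S}$ of $\D_{Q_0}$ into semi-coherent sawtooth regimes, each with top cube $Q_\S$ and a pairwise-disjoint family $\F_\S\subset\D_{Q_\S}$ of child tops, stopping the first time the accumulated $\m$-mass forces $\|\m_{\F_\S}\|_{\C(Q_\S)}\le\gamma$; the family $\Top:=\{Q_\S\}$ then satisfies a Carleson packing condition $\sum_{Q_\S\subseteq Q}\sigma(Q_\S)\le C_1\,\sigma(Q)$ with $C_1=C_1(M_0,\gamma,\mathrm{ADR})$. (This is the by-now-standard extraction of a corona adapted to a discrete Carleson measure; see \cite{DS2,HM}.) In particular the tree of tops has a $\sigma$-null ``bad set'', so $\sigma$-a.e.\ point of $Q_0$ lies in the good part $G_\S:=Q_\S\setminus\bigcup_{Q'\in\F_\S}Q'$ of exactly one $\S$, whence $\w(F)=\sum_{\S}\w(F\cap G_\S)$ modulo controlling the $\w$-measure of that bad set.

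\textbf{Step 3 (transfer and iteration).} For each $\S$, since $\|\m_{\F_\S}\|_{\C(Q_\S)}\le\gamma$, hypothesis \eqref{eq:PF-Ainfty} applies to $(Q_\S,\F_\S)$: for every $\varepsilon\in(0,1)$, $\sigma(A)/\sigma(Q_\S)\ge\varepsilon$ forces $\P_{\F_\S}\w(A)/\w(Q_\S)\ge 1/C_\varepsilon$ (using $\P_{\F_\S}\w(Q_\S)=\w(Q_\S)$). Applying this to the complement $A=Q_\S\setminus(F\cap G_\S)\supseteq\bigcup_{Q'\in\F_\S}Q'$ and noting that $\P_{\F_\S}\w(F\cap G_\S)=\w(F\cap G_\S)$ (because $F\cap G_\S$ is disjoint from every child top), one converts a bound on the $\sigma$-density of $F$ inside $Q_\S$ into the bound $\w(F\cap G_\S)\le(1-1/C_{\varepsilon})\,\w(Q_\S)$ whenever $\sigma(F\cap G_\S)/\sigma(Q_\S)\le 1-\varepsilon$; the essential feature exploited here is that $\P_{\F_\S}\w$ only ``sees'' $F$ through its trace on $G_\S$ and through the $\sigma$-densities $\sigma(F\cap Q')/\sigma(Q')$ on the child tops $Q'\in\F_\S$. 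One then runs this down the corona tree, tracking at each sawtooth both $\w(F\cap G_\S)$ and the contributions of the child tops recursively, and closing up by choosing $\theta$ small --- equivalently, by iterating the one-scale gain to a finite depth $N=N(M_0,\gamma,\{C_\varepsilon\})$ and absorbing the deep tail using the Carleson packing of $\Top$ from Step~2.

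\textbf{Main obstacle.} The delicate point is precisely Step~3: each application of the hypothesis yields only a \emph{fixed} loss $1-1/C_\varepsilon$ per scale and $C_\varepsilon\to\infty$ as $\varepsilon\to0$, so a naive iteration does not produce any geometric decay. The decay needed to close the argument must instead be manufactured by carefully intertwining the density threshold at which one stops tracking $F$ (so that the ``$F$-dense'' sawtooths pack by $\lesssim\theta\,\sigma(Q_0)$ at each generation), the iteration depth $N$, and the packing constant $C_1$; one must also verify that the set where the $\m$-stopping never terminates carries no $\w$-mass, which I expect to handle by running the whole argument on the truncated coronas $\F_{\S,M}$ (as in the Lemma~\ref{lem:approx}-type truncations) and passing to the limit $M\to\infty$ by monotone convergence. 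The bookkeeping in this iteration is the technical heart of the proof.
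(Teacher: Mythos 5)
You should first be aware that the paper does not prove this lemma at all: it is imported verbatim from \cite[Lemma~8.5]{HM} (the Lewis--Murray/Hofmann--Lewis ``extrapolation of Carleson measures''), so your proposal has to stand on its own — and it does not, because your Step~3, which you yourself call ``the technical heart,'' is exactly where the argument fails and is left unproved. The obstruction you flag is not a bookkeeping issue but a structural one: each regime only yields $\w(F\cap G_{\S})\le(1-1/C_{\varepsilon})\,\w(Q_{\S})$, and to sum this over the corona tree you need a bound of the form $\sum_{\S}\w(Q_{\S})\lesssim\w(Q_0)$, i.e.\ a Carleson packing of the tops \emph{with respect to $\w$}; Step~2 can only ever give packing with respect to $\sigma$, and transferring $\sigma$-packing into $\w$-packing is precisely the $A_\infty$-type comparison the lemma is supposed to produce. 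Since the tree has unboundedly many generations and the per-generation loss $1-1/C_{\varepsilon}$ is a fixed constant (with $C_\varepsilon\to\infty$ as $\varepsilon\to 0$), no choice of density threshold, iteration depth $N$, or truncation closes this scheme. The actual proof of \cite[Lemma~8.5]{HM} avoids the problem by inducting on a different parameter: not on scales/generations of a corona, but on the \emph{level of the Carleson norm}. One first observes that the conclusion holds when $\|\m\|_{\C(Q^0)}\le\gamma$ (take $\F=\emptyset$, so $\P_\F\w=\w$ and \eqref{eq:PF-Ainfty} is the conclusion), and then bootstraps from norm level $a$ to level $a+c\gamma$ by a stopping-time argument; only on the order of $M_0/\gamma$ bootstrap steps are needed, so the constants compound finitely many times. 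Any repair of your approach would essentially have to reproduce this mechanism.

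There is also a genuine defect in Step~2 as written. You claim a partition of $\D_{Q_0}$ into sawtooth regimes, with no bad cubes, each satisfying $\|\m_{\F_\S}\|_{\C(Q_\S)}\le\gamma$. This can be impossible: if a single cube carries $\alpha_Q>\gamma\,\sigma(Q)$ (perfectly compatible with \eqref{eq:m-CQ} once $M_0>\gamma$), then $Q$ cannot belong to any admissible regime, since $\m_{\F_\S}(\D_Q)\ge\alpha_Q>\gamma\,\sigma(Q)$ whenever $Q$ lies in the sawtooth. At a minimum you must allow a (packed) family of bad cubes; the packing of the tops with a constant $C_1(M_0,\gamma)$ then needs an actual argument (e.g.\ a John--Nirenberg estimate for the dyadic sweep of $\m$), not a citation to ``the by-now-standard extraction''; and bad cubes destroy the clean point-set decomposition $\w(F)=\sum_{\S}\w(F\cap G_\S)$ that Step~3 relies on, since the hypothesis \eqref{eq:PF-Ainfty} gives no direct control of $\w$ across them. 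But even granting all of Step~2, the proof does not close, for the reasons in the previous paragraph.
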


We are now ready to prove that $\eqref{list:con-corona}\Longrightarrow\eqref{list:con-Ainfty}$. A more restrictive version of this result appears in \cite[Section~4.2]{GMT} using different methods.  Let $(\B,\G,\bbF)$ be the assumed corona decomposition associated with $\omega_L$. Set for any $Q \in \D$ 
	\begin{equation}\label{eq:aQ}
		\alpha_Q := 
		\begin{cases}
			\sigma(Q), & Q \in \B \cup \Top(\bbF), 
			\\
			0, & \text{otherwise}. 
		\end{cases}
	\end{equation}
	We define the associated discrete measure $\m$ as in \eqref{eq:m-def}. Fix $Q^0 \in \D$ and write $\w:=\omega_L^{X_{Q^0}}$. Note that both $\w$ and $\sigma$ are dyadically doubling on $Q^0$, since $\Omega$ is a 1-sided CAD (see for instance \cite{HMT2}). In view of the Carleson packing condition for bad and top cubes, we have 
	\[
	\|\m\|_{\C(Q^0)} \le \|\m\|_{\C} \le M_0<\infty. 
	\]  
	Therefore, Lemma \ref{lem:extrapolation} implies that $\omega_L^{X_{Q^0}} \in A^{{\rm dyadic}}_{\infty}(Q^0)$ provided we show that  \eqref{eq:mF-small} for some small $\gamma \in (0, 1)$ to be found  implies \eqref{eq:PF-Ainfty}. Since $Q^0$ is arbitrary and the same constants $\alpha$ and $\beta$ are valid for all $Q^0$, the fact that the elliptic measure is doubling in the present scenario easily yields that $\omega\in A_\infty(\sigma)$, details are left to the interested reader.

	To complete the proof, it suffices to show that \eqref{eq:mF-small}  implies \eqref{eq:PF-Ainfty}. To this end, fix $Q_0 \in \D_{Q^0}$ and a family of pairwise disjoint dyadic subcubes $\F=\{Q_j\} \subset \D_{Q_0}$ satisfying \eqref{eq:mF-small} with $\gamma \in (0, 1)$ to be chosen momentarily. Let $F \subset Q_0$ be an arbitrary  Borel set.
	
	The case $\F=\{Q_0\}$ is trivial since 
	$\frac{\P_{\F} \, \w(F)}{\P_{\F} \, \w(Q_0)} = \frac{\sigma(F)}{\sigma(Q_0)}$, and \eqref{eq:PF-Ainfty} clearly holds. Thus we may assume that $\F \subset \D_{Q_0} \setminus \{Q_0\}$. Write $E_0:=Q_0\setminus (\bigcup_{Q_j\in\F} Q_j)$. We claim that there exists $\S_0 \in \G$ such that $Q_0 \subsetneq \Top(\S_0)$, $\D_{\F, Q_0} \subset \S_0$, and 
	\begin{align}\label{eq:wQQ}
		\frac{\w(Q)}{\sigma(Q)} \approx \frac{\w(Q^0 \cap \Top(\S_0))}{\sigma(Q^0 \cap \Top(\S_0))} =: \Lambda_0, \quad \forall Q \in \D_{\F, Q_0}. 
	\end{align}
	Assuming \eqref{eq:wQQ} momentarily, we conclude \eqref{eq:PF-Ainfty} as follows. By definition, 
	\begin{align}\label{eq:P1-P2}
		\P_{\F} \w(F) = \w (F\cap E_0)
		+ \sum_{Q_j \in \F} \frac{\w(Q_j)}{\sigma(Q_j)} \sigma(F \cap Q_j) : = \mathcal{I} + \mathcal{II}.  
	\end{align}
	We first deal with the second term. If we write $\widehat{Q}_j$ for the dyadic parent of $Q_j\in\F $, it is clear than 
	$\widehat{Q}_j \in \D_{\F, Q_0}$. Since both $\w$ and $\sigma$ are dyadically doubling, it follows from \eqref{eq:wQQ} that 
	\begin{align}\label{eq:P2}
		\mathcal{II} \approx \sum_{Q_j \in \F} \frac{\w(\widehat{Q}_j)}{\sigma(\widehat{Q}_j)} \sigma(F \cap Q_j) 
		\approx \Lambda_0 \sum_{Q_j \in \F} \sigma(F \cap Q_j) 
		= \Lambda_0 \, \sigma (F\setminus E_0) . 
	\end{align}
	To bound the first term, let $\eta>0$ be an arbitrary number. Since $\w$ is a regular Borel measure, one can find an open set $U_{\eta} \supset F\cap E_0$ such that $\w(U_{\eta} \setminus (F\cap E_0))<\eta$. Let $x \in F\cap E_0 \subset U_{\eta}$. Then $\Delta(x, r_x) \subset U_{\eta}$ for some $0<r_x<\infty$. Pick $Q_x \in \D_{Q_0}$ containing $x$ such that $\diam(Q_x)<r_x$ and $\ell(Q_x)<\ell(Q_0)$. This and the fact $x \in Q_0$ give $Q_x \subset \Delta(x, r_x) \cap Q_0 \subset U_{\eta} \cap Q_0$. Let $Q_x^{\text{max}} \in \D_{Q_0}$ be the maximal cube with $Q_x^{\text{max}} \supset Q_x$ such that $Q_x^{\text{max}} \subset U_{\eta} \cap Q_0$. Denote by $\widetilde{\F}$ be the collection of the maximal cubes $Q_x^{\text{max}}$ for $x \in F\cap E_0$. Note that $\widetilde{\F}$ is pairwise disjoint, 
	\begin{align}\label{eq:FFF}
		F\cap E_0 \subset \bigcup_{Q \in \widetilde{\F}} Q \subset U_{\eta} \cap Q_0, \quad\text{ and }\quad  
		F\cap E_0 \cap Q \neq \emptyset, \quad \forall Q \in \widetilde{\F}.
	\end{align}
	On the other hand, 
	\begin{equation}\label{eq:QFF}
		\widetilde{\F}\subset \D_{\F, Q_0}. 
	\end{equation}
	Otherwise, there are $Q \in \widetilde{\F}$ and $Q_j \in \F$ such that $Q \subset Q_j$. Then $F\cap E_0 \cap Q \subset E_0 \cap Q_j=\emptyset$, which contradicts \eqref{eq:FFF}. Then, we use \eqref{eq:wQQ}, \eqref{eq:FFF} and \eqref{eq:QFF} to obtain 
	\begin{multline*}
		\Lambda_0 \sigma(F\cap E_0) \le \sum_{Q \in \widetilde{\F}} \Lambda_0 \sigma(Q) 
		\approx \sum_{Q \in \widetilde{\F}} \w(Q) 
		\le \w(U_{\eta} \cap Q_0)
		\\
		\le \w(U_{\eta} \setminus (F\cap E_0))+\w(F\cap E_0)
		\le \eta + \w(F\cap E_0),  
	\end{multline*}
	where the implicit constant is independent of $\eta$. Letting $\eta \to 0$, we arrive at
	\begin{align}\label{eq:P1}
		\mathcal{I}=\w(F\cap E_0) \gtrsim \Lambda_0 \, \sigma(F\cap E_0). 
	\end{align}
	Now collecting \eqref{eq:P1-P2}, \eqref{eq:P2}, and \eqref{eq:P1} we get 
	\begin{align*}
		\P_{\F} \w(F) \gtrsim \Lambda_0 \, \sigma(F).  
	\end{align*}
	This and the fact that $\P_{\F} \w(Q_0)=\w(Q_0) \approx \Lambda_0 \sigma(Q_0)$ easily yield
	\begin{align}
		\frac{\P_{\F} \, \w(F)}{\P_{\F} \, \w(Q_0)} \gtrsim \frac{\sigma(F)}{\sigma(Q_0)},  
	\end{align}
	which immediately gives \eqref{eq:PF-Ainfty}. 
	
	It remains to show \eqref{eq:wQQ}. Note that if $Q \in \D_{\F, Q_0}$, then    
	\[
	\alpha_{Q} \le \sum_{Q' \in \D_{\F, Q}} \alpha_{Q'} 
	\le \|\m_{\F}\|_{\C(Q_0)} \sigma(Q) \le \gamma \, \sigma(Q). 
	\]
	By \eqref{eq:aQ}, if we simply take $\gamma=\frac12$ definition, we conclude that  $Q \not\in \B \cup \Top(\bbF)$. That is, our choice of $\gamma$ guarantees that $\D_{\F, Q_0}\subset \D\setminus (\B \cup \Top(\bbF))$. This and the semi-coherency of the regimes in $\bbF$ immediately imply that there exists a unique $\S_0 \in \G$ such that $\D_{\F, Q_0}\subset \S_0$ and  $Q_0 \subsetneq \Top(\S_0)$. 
	Note that $Q^0\cap \Top(\S_0)\in \S_0$ since $\S_0$ is semi-coherent and $\S_0\ni Q_0\subset Q^0\cap \Top(\S_0) \subset \Top(\S_0)$.
	Thus, \eqref{corona-w:Aver-S} implies
	\begin{align}\label{fcevca}
		\frac{\omega_L^{X_{\S_0}}(Q)}{\sigma(Q)} 
		\approx 
		\frac{\omega_L^{X_{\S_0}}(Q_{\S_0})}{\sigma(Q_{\S_0})} 
		\approx 
		\frac{\omega_L^{X_{\S_0}}(Q^0\cap \Top(\S_0))}{\sigma(Q^0\cap\Top(\S_0))} 
		, \qquad \forall Q \in \D_{\F, Q_0}.  
	\end{align}
	Note that $Q_0 \subsetneq \Top(\S_0)\cap Q^0\subset Q_{\S_0}\cap Q^0$, hence $Q_{\S_0}\cap Q^0\neq\emptyset$. 
	If $Q^0 \subset Q_{\S_0}$, then the change of pole formula gives
	\[
	\frac{d\omega_L^{X_{Q^0}}}{d\omega_L^{X_{\S_0}}} (y)
	\approx
	\frac1{\omega_L^{X_{\S_0}}(Q^0)},
	\qquad\text{for $\omega_L^{X_{\S_0}}$-a.e.~$y\in Q^0$}.
	\]
	As a result, if $Q, Q'\subset Q^0$ we obtain
	\[
	\frac{\omega(Q)}{\omega(Q')}
	=
	\frac{\omega_L^{X_{Q^0}}(Q)}{\omega_L^{X_{Q^0}}(Q')}
	\approx
	\frac{\omega_L^{X_{\S_0}}(Q)}{\omega_L^{X_{\S_0}}(Q')}. 
	\]
	Analogously,  if 
	$Q_{\S_0}\subset Q^0$, then the change of pole formula gives
	\[
	\frac{d\omega_L^{X_{\S_0}}}{d\omega_L^{X_{Q^0}}} (y)
	\approx
	\frac1{\omega_L^{X_{Q^0}}(Q_{\S_0})},
	\qquad\text{for $\omega_L^{X_{Q^0}}$-a.e.~$y\in Q_{\S_0}$}. 
	\]
	Thus,  if $Q, Q'\subset Q_{\S_0}$, then 
	\[
	\frac{\omega_L^{X_{\S_0}}(Q)}{\omega_L^{X_{\S_0}}(Q')}
	\approx
	\frac{\omega_L^{X_{Q^0}}(Q)}{\omega_L^{X_{Q^0}}(Q')}
	=
	\frac{\omega(Q)}{\omega(Q')}. 
	\]
	This means that in either scenario if $Q, Q'\subset Q_{\S_0}\cap Q^0$ we obtain
	\[
	\frac{\omega(Q)}{\omega(Q')}
	\approx
	\frac{\omega_L^{X_{\S_0}}(Q)}{\omega_L^{X_{\S_0}}(Q')}.
	\]
	Recalling that $\D_{\F, Q_0}\subset \S_0\cap \D_{Q_0} \subset\D_{Q_{\S_0}}\cap \D_{Q^0}$, the previous estimate and  \eqref{fcevca} give as desired \eqref{eq:wQQ}:
\begin{align*}
\frac{\w(Q)}{\w(Q^0\cap \Top(\S_0))} 
\approx 	
\frac{\omega_L^{X_{\S_0}}(Q)}{\w^{X_{\S_0}}(Q^0\cap \Top(\S_0))} 
\approx
\frac{\sigma(Q)}{\sigma(Q^0\cap\Top(\S_0))} 
, \qquad \forall Q \in \D_{\F, Q_0}.  
\end{align*}
	The proof is then complete. \qed
	
\subsection{Proof of Theorem \ref{thm:connected}, part \eqref{1-sidedCAD:iii}} \label{sec:corona-Ainfty:iii}
Assuming that $\Omega$ is a 1-sided CAD, we just need to apply \cite[Theorem~1.2]{AHMNT} for the Laplacian, \cite[Theorem~1.6]{HMMTZ} for the Kenig-Piper operators, \cite[Corollary ~10.3]{HMMTZ} for the analog class using oscillations, and \cite[Theorem~1.4]{CHMT} for the perturbations. Details are left to the interested reader. \qed

\end{document}